\documentclass[11pt]{amsart}

\usepackage{latexsym}
\usepackage{amsmath,amsthm,amssymb, amscd,color,mathtools,bbm}
\usepackage{mathrsfs}
\usepackage[all]{xy}
\usepackage[all]{xy}
\usepackage{tikz}
\usepackage{tikz-cd}
\usepackage{adjustbox}
\usetikzlibrary{decorations.pathmorphing}
\usepackage{blkarray} 
\usepackage{framed}
\theoremstyle{plain}
\newtheorem{theorem}{Theorem}[section]
\newtheorem{lemma}[theorem]{Lemma}
\newtheorem{corollary}[theorem]{Corollary}
\numberwithin{equation}{section}

\providecommand{\customgenericname}{}
\newcommand{\newcustomtheorem}[2]{%
	\newenvironment{#1}[1]
	{%
		\renewcommand\customgenericname{#2}%
		\renewcommand\theinnercustomgeneric{##1}%
		\innercustomgeneric
	}
	{\endinnercustomgeneric}
}

\newcustomtheorem{customthm}{Theorem}

\theoremstyle{definition}
\newtheorem{definition}[theorem]{Definition}
\newtheorem{example}[theorem]{Example}

\newtheorem{proposition}[theorem]{Proposition}
\newtheorem{remark}[theorem]{Remark}
\newtheorem{conj}[theorem]{Conjecture}

\theoremstyle{remark}

\newcommand{\CC}{\mathbb{C}}
\newcommand{\QQ}{\mathbb{Q}}

\newcommand{\ZZ}{\mathbb{Z}}

\newcommand{\WG}{\mbox{WGr}}
\newcommand{\G}{\mbox{Gr}}
\newcommand{\WW}{\mathbb{W}}
\newcommand{\Pl}{ Pl\"{u}cker}

\addtolength{\textwidth}{2.2cm} \addtolength{\hoffset}{-.9cm}
\addtolength{\textheight}{0.6cm} \addtolength{\topmargin}{-0.2cm}

\usepackage{blkarray} 
\usepackage{framed}

\newcustomtheorem{customlemma}{Lemma}
\newcustomtheorem{customprop}{Proposition}

\newcommand{\lcm}{\operatorname{lcm}}

\begin{document}

\title[Integral cohomology of weighted Grassmann orbifold] {Integral cohomology rings of weighted Grassmann orbifolds and rigidity properties}

\author[K Brahma]{Koushik Brahma}
\address{Department of Pure and Applied Mathematics, Waseda University 3-4-1 Okubo, Shinjuku-ku,
Tokyo 169-8555, Japan}
\email{koushikbrahma95@gmail.com, w.iac24166@kurenai.waseda.jp}

\subjclass[2020]{14M15, 57R18, 55N10, 55N91}

\keywords{Pl\"{u}cker weight vector, weighted Grassmann orbifold, $q$-CW complex, Pl\"{u}cker permutation, integral cohomology, equivariant cohomology, equivariant Schubert basis, structure constant}

\date{\today}
\dedicatory{}

\abstract In this paper, we introduce `Pl\"{u}cker weight vector' and establish the definition of a weighted Grassmann orbifold ${\rm Gr}_{\bf b}(k,n)$, corresponding to a  Pl\"{u}cker weight vector `${\bf b}$'. We achieve an explicit classification of weighted Grassmann orbifolds up to certain homeomorphism in terms of the Pl\"{u}cker weight vectors. We study the integral cohomology of ${\rm Gr}_{\bf b}(k,n)$ and provide some sufficient conditions such that the integral cohomology of ${\rm Gr}_{\bf b}(k,n)$ has no torsion. We explicitly describe the formula of the equivariant structure constants with respect to the equivariant Schubert basis in equivariant cohomology ring of divisive weighted Grassmann orbifolds with integer coefficients. Eminently, we compute the integral cohomology rings of divisive weighted Grassmann orbifolds explicitly.
\endabstract

\maketitle

\section{Introduction}
Grassmann manifolds are the central objects of study in algebraic topology, algebraic geometry, and enumerative geometry, see \cite{JP,KnTa,Lak}. 
Consider the $n$-dimensional complex vector space $\CC^n$ and a positive integer $k$ satisfying $1\leq k<n$. The space comprising all $k$-dimensional subspaces in $\CC^n$ possesses the structure of a smooth manifold. This manifold is known as a complex Grassmann manifold (in short, Grassmann manifold in this paper) and is denoted by $\G(k,n)$. 
Grassmann manifold $\G(k,n)$ can be equivariantly embedded into the complex projective space $\CC P^m$, where $m+1={n \choose k}$. This embedding is known as the Pl\"{u}cker embedding, presenting $\G(k,n)$ as a projective algebraic variety. The image of the Grassmann manifold in the projective space is described by a set of homogeneous polynomial equations known as Pl\"{u}cker relations. The non-zero points in $\CC^{m+1}$ that satisfy Pl\"{u}cker relations are referred to as Pl\"{u}cker coordinates.

In \cite{Ka}, Kawasaki introduced a ${\bf{b}}$-action of $\CC^*$ on $\CC^{m+1}\setminus\{\mathbf{0}\}$ for a weight vector ${\bf{b}}=(b_0,\dots,b_m)\in(\ZZ_{\geq 1})^{m+1}$ as follows:
\begin{equation}\label{eq_weigh_act}
	t(z_0,z_1,z_2,\dots,z_m)=(t^{b_0}z_0,t^{b_1}z_1,t^{b_2}z_2,\dots,t^{b_m}z_m).
\end{equation}

The weighted projective space $\WW P(b_0,b_1,\dots,b_m)$ is defined as the orbit space of the ${\bf{b}}$-action of $\CC^{*}$ on $\CC^{m+1}\setminus\{\mathbf{0}\}$. Notably, when ${\bf{b}}$ is chosen as $(1,1,\dots,1)$, it becomes the complex projective space $\CC P^m$. In the article \cite{Ka}, Kawasaki proved that the integral cohomologies of weighted projective spaces have no torsion and are concentrated in even degrees. Moreover, he computed the integral cohomology ring of weighted projective spaces explicitly. In \cite{HHRW}, Harada, Holm, Ray, and Williams computed the integral generalized equivariant cohomology ring of a divisive weighted projective space. Several cohomological properties of weighted projective space can be found in \cite{Am}, and \cite{BFR}.

Corti and Reid introduced weighted Grassmannian in \cite{CoRe} as the weighted projective analog of the Grassmann manifold.  For any two positive integers $ k<n$, Abe and Matsumura defined weighted Grassmannian $\WG(k,n)$ in \cite{AbMa1} corresponding to a weight vector $W=(w_0,w_1,\dots,w_n)\in (\ZZ_{\geq 0})^n$ and $a\in \ZZ_{\geq 1}$. They studied the equivariant cohomology ring of $\WG (k,n)$ with rational coefficients. More generally, the equivariant cohomology ring of weighted flag varieties with rational coefficients was studied in \cite{ANQ}. Hilbert series of some weighted flag varieties was studied in \cite{QS,Qu}. The author and Sarkar \cite{BS} gave another topological definition of $\WG(k,n)$ and called them weighted Grassmann orbifolds. It has an orbifold structure and admits a $q$-CW complex structure similar to the Schubert cell decomposition of the Grassmann manifold.  For basic properties of orbifolds, readers are referred to \cite{ALR}.

In this article, we delve into the concept of Pl\"{u}cker coordinates, which are defined as solutions to a system of homogeneous polynomial equations known as Pl\"{u}cker relations, as illustrated in \eqref{eq_pl_vect}. This perspective naturally leads us to introduce Pl\"{u}cker weight vector ${\bf{b}}=(b_0,b_1,\dots,b_m)\in (\ZZ_{\geq 0})^{m+1}$, where $m+1={n \choose k}$ for two positive integers $k$ and $n$ such that $k<n$. Associated to each Pl\"{u}cker weight vector ${\bf{b}}$, we define a topological space $\G_{\bf{b}}(k,n)$. The space $\G_{\bf b}(k,n)$ has an orbifold structure and call this a weighted Grassmann orbifold. 
We introduce the notion of Pl\"{u}cker permutation, which yields several invariant $q$-CW complex structures on $\G_{\bf b}(k,n)$. Moreover, we classify weighted Grassmann orbifolds up to certain homeomorphism in terms of Pl\"{u}cker weight vectors and Pl\"{u}cker permutations.

The main goal of this paper is to compute the integral cohomology ring of weighted Grassmann orbifolds with integer coefficients. The author and Sarkar \cite{BS} introduced 
divisive weighted Grassmann orbifolds and computed the generalized $T^n$-equivariant cohomology ring of these spaces with integer coefficients, where $T^n$ is the standard compact abelian torus of dimension $n$. There exists an equivariant Schubert basis for the equivariant cohomology ring divisive weighted Grassmann orbifolds as $H^{*}_{T^n}(\{pt\};\ZZ)$ module. In this article, we explicitly calculate the equivariant structure constants with respect to the equivariant Schubert basis in $H_{T^n}^{*}(\G_{\bf b}(k,n);\ZZ)$ with integer coefficients. This paves the way for computing the cup product for the cohomology ring of a divisive weighted Grassmann orbifold with integer coefficients.  The paper is organized as follows.

 In Section \ref{bld_seq_wgt_gmn_obd}, we explore the reltionship between the Pl\"{u}cker relations and Pl\"{u}cker coordinates. We denote the set of all  Pl\"{u}cker coordinates by $Pl(k,n)$ as a subset of $\CC^{m+1}\setminus \{\mathbf{0}\}$, where $m+1={n \choose k}$. The Pl\"{u}cker relations inspired us to introduce Pl\"{u}cker weight vector ${\bf{b}}=(b_0,b_1,\dots,b_m)\in (\ZZ_{\geq 1})^{m+1}$ in the manner that $Pl(k,n)$ is invariant subset of $\CC^{m+1}\setminus\{\mathbf{0}\}$  with respect to the ${\bf{b}}$-action of $\CC^{*}$ defined in \eqref{eq_weigh_act}, where $m+1={n \choose k}$. Then we define a topological space $\G_{\bf{b}}(k,n)$, as the orbit space of $Pl(k,n)$ by the ${\bf b}$-action of $\CC^*$ for a Pl\"{u}cker weight vector ${\bf b}=(b_0,b_1,\dots,b_m)$, see Definition \ref{def_wgt_gsm}. We prove that this class of $\G_{\bf b}(k,n)$ broadens the class of weighted Grasmann orbifold discussed in \cite{AbMa1} and \cite{BS}; see Proposition \ref{prop_rel_two_wt}. We discuss the orbifold and $q$-CW complex structure of $\G_{\bf{b}}(k,n)$ and call it a weighted Grassmann orbifold. The $q$-CW complex structure of $\G_{\bf{b}}(k,n)$ induces a building sequence
 \[\{pt\}=X^0 \subset X^1 \subset X^2 \subset \dots \subset X^m=\G_{\bf b}(k,n)\]
 of $\G_{\bf{b}}(k,n)$, see \eqref{filtration}. This induces a cofibration 
$$L'(b_i;{\bf{b}}^{(i)}) \hookrightarrow X^{i-1} \to X^i,$$
see Proposition \ref{prop_cofibre},
 where $L'(b_i;{\bf{b}}^{(i)})$ is a lens complex defined in \cite{Ka}. 

In Section \ref{sec_clss_prob}, we introduce Pl\"{u}cker permutations, permutations $\sigma$ on $\{0,1,\dots,m\}$  those act on the Pl\"{u}cker coordinates. Then we show that all Pl\"{u}cker weight vectors are invariant under Pl\"{u}cker permutations.  Moreover, Pl\"{u}cker permutation provides several different $q$-CW complex structures of a weighted Grassmann orbifold. We prove the following:
\begin{customthm}{A}[Theorem \ref{thm_cl_wgt_gsm}]
  Two weighted Grassmann orbifolds are weakly equivariantly homeomorphic if their Pl\"{u}cker weight vectors differ by a Pl\"{u}cker permutation and a scalar multiplication.  
\end{customthm}
Conversely, we formulate a conjecture asserting that weighted Grassmann orbifolds can be classified up to homeomorphism solely in terms of their Pl\"{u}cker weight vectors together with Pl\"{u}cker permutations. In support of this conjecture, we prove Lemma \ref{lem_nor_vec}, Lemma \ref{lem_prim} and Lemma \ref{lemma_int_coh}. 
Moreover, we prove the following regidity theorem.
\begin{customthm}{B}[Theorem \ref{thm_cohom_rig}]
If two weighted Grassmann orbifolds ${\rm{Gr}}_{\bf b}(k,n)$ and ${\rm{Gr}}_{\bf c}(k,n)$  are homeomorphic such that each coordinate element mapped to a coordinate element then ${\bf b}$ and ${\bf c}$ are identical up to a scalar multiplication and a permutation.
\end{customthm}
The coordinate elements are the set of all fixed points for the several group actions on $\G_{\bf b}(k,n)$. Thus, the above Theorem classifies the weighted Grassmann orbifolds up to certain weakly equivariant homeomorphism, see Corollary \ref{cor_cls_eq_hom}.

 In Section \ref{sec_tor_int_coh_wgt_gsm}, we show that
  for a prime $p$, the integral cohomology of any weighted Grassmann orbifold contains no $p$-torsion under some hypothesis on the Pl\"{u}cker weighted vector, see Theorem \ref{thm_no_p_tor}. 
  We study some non-trivial examples of $\G_{\bf{b}}(2,4)$ and $\G_{\bf{b}}(2,5)$ such that the integral cohomologies of those weighted Grassmann orbifolds have no torsion and concentrated in even degrees. We define divisive weighted Grassmann orbifolds in terms of Pl\"{u}cker permutations and Pl\"{u}cker weight vectors. 
 Then, we show that the integral cohomologies of divisive Grassmann orbifolds are torsion-free and is concentrated in even degrees.

In Section \ref{sec_st_con_div_gsm_orb}, we describe the $T^n$-equivariant cohomology ring of the divisive $\G_{\bf{b}}(k,n)$  with integer coefficients and the equivariant Schubert basis $\{{\bf{b}}\widetilde{\mathbb{S}}_{\lambda^i}\}_{i=0}^m$ for the aforementioned ring as a $\ZZ[y_1,\dots,y_n]$ module. The main achievement of this section is computing the equivariant structure constants ${\bf{b}}\widetilde{\mathscr{C}}_{i~j}^{\ell}$ with respect to the basis $\{{\bf{b}}\widetilde{\mathbb{S}}_{\lambda^i}\}_{i=0}^m$ satisfies the following.     
\begin{equation}\label{eq_wgt_mul1}
	{\bf{b}}\widetilde{\mathbb{S}}_{\lambda^i}{\bf{b}}\widetilde{\mathbb{S}}_{\lambda^j}=\sum_{\ell=0}^m {\bf{b}}\widetilde{\mathscr{C}}_{i~j}^{\ell}{\bf{b}}\widetilde{\mathbb{S}}_{\lambda^\ell}.
\end{equation} 
If ${\bf{b}}=(1,\dots,1)$, the above basis leads to the Schubert basis $\{\widetilde{\mathbb{S}}_{\lambda^i}\}_{i=0}^m$ for $H_{T^n}^{*}(\G(k,n);\mathbb{Z})$. We first describe the equivariant structure constants $\widetilde{\mathscr{C}}_{i~j}^{\ell}$ with respect to the basis $\{\widetilde{\mathbb{S}}_{\lambda^i}\}_{i=0}^m$ in \eqref{thm_eq_st_con} following \cite{KnTa}.
Then using Vietoris-Begle mapping theorem from \cite{Sp} we obtain a basis $\{P\widetilde{\mathbb{S}}_{\lambda^i}\}_{i=0}^m$ for  $H_{T^{n+1}}^{*}(Pl(k,n);\mathbb{Q})$ as a $\QQ[y_1,y_2,\dots,y_n]$ module, 
Then we deduce that ${\bf{b}}\widetilde{\mathbb{S}}_{\lambda^i}$ can be obtained by some isomorphic image of $P\widetilde{\mathbb{S}}_{\lambda^i}$. Eventually, we compute the equivariant structure constants with respect to the basis $\{{\bf{b}}\widetilde{\mathbb{S}}_{\lambda^i}\}_{i=0}^m$.
\begin{customthm}{C}[Theorem \ref{thm_wei_st_con}]
The equivariant structure constant ${\bf{b}}\widetilde{\mathscr{C}}_{i~j}^{\ell}$ is given by $${\bf{b}}\widetilde{\mathscr{C}}_{i~j}^{\ell}=\sum_{\lambda^\ell\succeq\lambda^q\succeq \lambda^i,\lambda^j}\sum_{R}\sum_{s=0}^{|R|}K(i,j,q,R)\mathcal{L}_{s,R} {\bf{b}}\widetilde{\mathscr{C}}_{q~g^s}^{\ell}.$$
\end{customthm}

Here, $R$ is a finite collection of elements in $\{1,2,\dots,n-1\}$. The non-negative integers $K(i,j,q,R)$ are known due to \cite{KnTa}, while the explicit formulae for the remaining coefficients $\mathcal{L}_{s,R} $ and ${\bf{b}}\widetilde{\mathscr{C}}_{q~g^s}^{\ell}$ are explained in \eqref{eq_Lsqr} and \eqref{eq_rep_wgt_per_rul} respectively. As a corollary, We describe the equivariant Pieri rule in $H_{T^n}^{*}(\G_{\bf b}(k,n);\ZZ)$. 
We demonstrate the positivity of equivariant structure constants; see Theorem \ref{thm_pos_eq_st_con}.
Using the upper triangularity of every basis element ${\bf{b}}\widetilde{\mathbb{S}}_{\lambda^i}$, we prove that:
\begin{customthm}{D}[Theorem \ref{prop_st_con_int}]
    Let ${\rm Gr}_{\bf{b}}(k,n)$ be a divisive weighted Grassmann orbifold. Then all the equivariant structure constants ${\bf{b}}\widetilde{\mathscr{C}}_{i~j}^{\ell}$ are in $ \ZZ[y_1,y_2,\dots,y_n]$.
\end{customthm}

In Section \ref{sec_int_coh}, we work out on a basis $\{{\bf b}\mathbb{S}_{\lambda^i}\}_{i=0}^m$ for the integral cohomology groups of divisive weighted Grassmann orbifolds. Finally, we compute the structure constant ${\bf{b}}\mathscr{C}_{i~j}^{\ell}$ with respect to the above basis with integer coefficients:
\begin{customthm}{E}[Theorem \ref{thm_st_con_ord_coh}]
    The structure constant ${\bf{b}}\mathscr{C}_{i~j}^{\ell}$ is given by $${\bf{b}}\mathscr{C}_{i~j}^{\ell}=\mathscr{C}_{i~j}^{\ell}+\sum_{\lambda^q~:~\lambda^\ell\succ\lambda^q\succeq\lambda^i,\lambda^j}\mathscr{D}_{i~j}^{\ell~q}.$$
\end{customthm}

Here $\mathscr{C}_{i~j}^{\ell}$ is the structure constant in $H^{*}(\G(k,n);\ZZ)$ and $\mathscr{D}_{i~j}^{\ell~q}$ is described in \eqref{eq_dij}.
This explicitly describes the integral cohomology ring of a divisive $\G_{\bf{b}}(k,n)$. We discuss an example to describe the explicit computation of the structure constants and the integral cohomology ring of a divisive $\G_{\bf{b}}(2,4)$. 

In appendix, we explore the integral cohomology of $\G_{\bf b}(2,4)$.
The main result of this section is the following:
 \begin{customthm}{F}[Theorem \ref{tor_in_2,4}]
$H^i({\rm{Gr}}_{\bf{b}}(2,4);\ZZ)$ is torsion-free and concentrated in even degrees for any weighted Grassmann orbifold ${\rm{Gr}}_{\bf b}(2,4)$ if $i\neq 3$. 
 \end{customthm}

\section{Building sequences of weighted Grassmann orbifolds} \label{bld_seq_wgt_gmn_obd}
In this Section, we introduce the notion of the Pl\"{u}cker weight vector, which we use to give a new definition of the weighted Grassmann orbifold. Then, we show that this definition generalizes the weighted Grassmannian discussed in \cite{AbMa1} and \cite{BS}. We discuss a $q$-CW complex structure of weighted Grassmann orbifold and explore the Lens complexes those arise in the cofibration induced  from the $q$-CW complex structure.

For two positive integers $k$ and $n$ such that $k<n$, we recall a Schubert symbol $\lambda$ is a sequence of $k$ integers $(\lambda_1,\lambda_2,\dots, \lambda_k)$ such that $1\leq \lambda_1 < \lambda_2 < \cdots < \lambda_k\leq n$. 
 A pair $(s,s')$ is called a inversion of $\lambda$ if $s\in \lambda, s'\notin \lambda$ and $s<s'$. The set of all inversions of $\lambda$ is denoted by $\rm{inv}(\lambda)$. If $(s,s') \in \rm{inv}(\lambda)$, then $(s,s')\lambda$ denote the Schubert symbol obtained by
replacing $s$ by $s'$ in $\lambda$ and ordering the later set.

Define a partial order `$\preceq$' on the set of Schubert symbols for $k<n$ by
\begin{equation}\label{eq_bru_ord}
	\lambda \preceq \mu \text{ if } \lambda_i\leq \mu_i \text{ for all } i=1,2,\dots,k.
\end{equation}
The Schubert symbols for $k<n$ form a lattice with respect to this partial order `$\preceq$'. 

There are $n\choose k$ many Schubert symbols for two positive integers $k$ and $n$ such that $k<n$. Now consider a total order $`<$'
on the set of all Schubert symbols which preserve the partial order `$\prec$'. In other words, if $\lambda\prec\mu$ then $\lambda<\mu$. For example, one can consider the dictionary order on the set of Schubert symbols. Then, it is a total order that preserves the partial order $\preceq$ defined in \eqref{eq_bru_ord}. Thus we can consider a chain \begin{equation}\label{eq_total_ord}
	\lambda^0<\lambda^1<\lambda^2< \dots <\lambda^m
\end{equation} 
on Schubert symbols for $k<n$, where $m = {n \choose k} -1$.

\subsection{Pl\"{u}cker weight vectors and weighted Grassmann orbifolds}\label{subsec_pl_wgt_vec}
Let $\Lambda^k(\CC^n)$ be the $k$-th exterior product of the $n$-dimensional complex vector space $\CC^n$. Note that the standard ordered basis $\{e_1, e_2, \dots, e_n\}$ of $\mathbb{C}^n$ induces an ordered basis $\{e_{\lambda^0}, e_{\lambda^1}, \dots, e_{\lambda^m}\}$ of $\Lambda^k(\mathbb{C}^n)$, where $e_{\lambda^i}=e_{\lambda^i_1}\wedge e_{\lambda^i_2}\wedge\dots\wedge e_{\lambda^i_k}$ for a Schubert symbol $\lambda^i=(\lambda^i_1,\lambda^i_2,\dots,\lambda^i_k)$. Then one can identify $\Lambda^k(\mathbb{C}^n)$ with $\mathbb{C}^{m+1}$. 
Consider a subset of $\Lambda^k(\CC^n)$ defined by the following
\begin{equation}\label{eq_pl_kn}
   Pl(k,n):= \{a_1\wedge a_2\wedge a_3\wedge\dots\wedge a_k\colon a_i\in \CC^n\}\setminus\{\bf{0}\}. 
\end{equation}

 Then $Pl(k,n)\subseteq \CC^{m+1}\setminus\{\bf{0}\}$. Consider an element $z=\sum_{i=0}^m z_ie_{\lambda^i}\in \CC^{m+1}\setminus\{\bf{0}\}$. Then $z\in Pl(k,n)$ iff it satisfies the following condition
\begin{equation}\label{eq_plu_coord}
  \sum_{j=0}^{k}(-1)^jz_{r_j}z_{s_j}=0 , 
\end{equation}
where 
$\lambda^{r_j}=(i_1,\ldots,i_{k-1},\ell_j)$ and $\lambda^{s_j}=(\ell_0,\ldots,\widehat{\ell}_j,...,\ell_k)$ for any two ordered sequences $1\leq i_1<\dots<i_{k-1}\leq n$ and $1\leq \ell_0<\ell_1<\dots<\ell_k\leq n$, see \cite[Theorem 3.4.11]{Jac}. 

The relation described in \eqref{eq_plu_coord} is known as a Pl\"{u}cker relation. The coordinates of $Pl(k,n)$ are known as Pl\"{u}cker coordinates. Note that a Pl\"{u}cker relation $\mathfrak{p}$ as in \eqref{eq_plu_coord} uniquely corresponds to $(k+1)$ many pairs $\{(r_0^\mathfrak{p},s_0^\mathfrak{p}), (r_1^\mathfrak{p},s_1^\mathfrak{p}), \dots, (r_k^\mathfrak{p},s_k^\mathfrak{p})\}$ of elements in $\{0,1,\dots,m\}$.

\begin{remark}
In the definition of {\Pl} relation, we can assume $2k\leq n$. If $2k>n$ consider $k'=n-k$, then $2k'=2n-2k<n$. Note that ${n\choose k}={n \choose k'}$. Let $\lambda=(\lambda_1,\dots,\lambda_k)$ be a Schubert symbol for $k<n$. Then $\{1,2,\dots,n\}\setminus \{\lambda_1,\dots,\lambda_k\}$ in an increasing order gives a Schubert symbol for $k'<n$. This correspondence of the Schubert symbol gives a relation as in \eqref{eq_plu_coord} in $Pl(k,n)$ induced from $Pl(k',n)$.
\end{remark}

 Consider the ${\bf{b}}$-action of $\CC^*$ on $\CC^{m+1}\setminus\{{\bf 0}\}$ defined in \eqref{eq_weigh_act}.
 Now the natural question is whether $Pl(k,n)$ is an invariant subset of $\CC^{m+1}\setminus\{{\bf 0}\}$ with respect to this ${\bf{b}}$-action. Note that $Pl(k,n)$ will be an invariant subset of $\CC^{m+1}\setminus\{{\bf 0}\}$ iff   $$(t^{b_0}z_0,t^{b_1}z_1,,\dots,t^{b_m}z_m)\in Pl(k,n) \text{ for every }t \in \CC^{*} \text{ and } (z_0,z_1,\dots,z_m)\in Pl(k,n).$$ 
Using \eqref{eq_plu_coord}, $(t^{b_0}z_0,t^{b_1}z_1,\dots,t^{b_m}z_m)\in Pl(k,n)$ iff 
\begin{equation}\label{eq_pl_vect}
   \sum_{j=0}^{k}(-1)^j t^{b_{r_j}+b_{s_j}}z_{r_j}z_{s_j}=0. 
\end{equation} 
 This may not be true in general for any arbitrary ${\bf{b}}=(b_0, b_1,\dots, b_m)\in (\mathbb{Z}_{\geq 1})^{m+1}$.

\begin{remark}\label{rmk_pl_rln}
Consider $(k+1)$ pairs $(r_0,s_0), \dots, (r_k,s_k)$ corresponding to a {\Pl} relation. 
We fix two pairs $(r_i,s_i)$, $(r_j,s_j)$ for $i\neq j$ and consider $z=(z_0,z_1,\dots,z_m)\in \CC^{m+1}$ by $z_{r_i}=z_{s_i}=z_{r_j}=1$, $z_{s_j}=(-1)^{i+j+1}$ and the remaining components are zero. Then $z\in Pl(k,n)$ and \eqref{eq_pl_vect} reduced to $t^{b_{r_j}+b_{s_j}}=t^{b_{r_i}+b_{s_i}}$, for all $t\in \CC^{*}$.  This implies $b_{r_i}+b_{s_i}=b_{r_j}+b_{s_j}$. Thus both \eqref{eq_plu_coord} and \eqref{eq_pl_vect} satisfies together for any $t\in \CC^*$ and for every $z\in Pl(k,n)$ iff $b_{r_j}+b_{s_j}=b_{r_i}+b_{s_i}$ is constant for all $i,j$ in $\{0,1,\dots,k\}$. This motivates us to introduce the Pl\"{u}cker weight vector. 
\end{remark}

 In the next definition and throughout the paper, we assume $k$ and $n$ are two positive integers with $k<n$ and $m+1={n \choose k}$.
 \begin{definition}\label{def_plu_vec}
 A weight vector ${\bf{b}}=(b_0, b_1,\dots, b_m)\in (\mathbb{Z}_{\geq 1})^{m+1}$ is called Pl\"{u}cker weight vector if for every Pl\"{u}cker relation $\mathfrak{p}$ 
$$b_{r_j^\mathfrak{p}}+b_{s_j^\mathfrak{p}}=b_{r_i^\mathfrak{p}}+b_{s_i^\mathfrak{p}} \text{ for all } i,j \text{ in }\{0,1,\dots,k\},
$$ where the $(k+1)$ many pairs $\{(r_0^\mathfrak{p},s_0^\mathfrak{p}), (r_1^\mathfrak{p},s_1^\mathfrak{p}), \dots, (r_k^\mathfrak{p},s_k^\mathfrak{p})\}$ are uniquely defined for every Pl\"{u}cker relation $\mathfrak{p}$. 
 \end{definition}

\begin{remark}\label{rem_plu_wgt_vec}
Let ${\bf b}=(b_0,b_1,\dots,b_m)$ be a Pl\"{u}cker weight vector. If  $\lambda^i\cup \lambda^j=\lambda^r\cup \lambda^q$ for any 4 Schubert symbols $\lambda^i$, $\lambda^j$, $\lambda^q$, $\lambda^r$ then $b_i+b_j=b_r+b_q$.  
\end{remark}

\begin{example}\label{ex_pl_wgt_vec}
	For $k=2$ and $n=4$, we have $m={4\choose2}-1=5$. In this case, we have $6$ Schubert symbols which are $$\lambda^0=(1,2),\lambda^1=(1,3),\lambda^2=(1,4),\lambda^3=(2,3),\lambda^4=(2,4)~ \text{and}~ \lambda^5=(3,4)$$ in the total ordering as in \eqref{eq_total_ord}.
	Consider $z=(z_0,z_1,\dots,z_5)\in \CC^6\setminus \{0\}$. Let  $$i_1=1,~ \ell_0=2<\ell_1=3<\ell_2=4.$$ Then $z\in Pl(2,4)$ iff $z_0z_5-z_1z_4+z_2z_3=0$. A weight vector ${\bf{b}}=(b_0,b_1,\dots,b_5)\in (\ZZ_{\geq 1})^6$ is a Pl\"{u}cker weight vector if $b_0+b_5=b_1+b_4=b_2+b_3$.  
\end{example}

Using Remark \ref{rmk_pl_rln}, it follows that $Pl(k,n)$ is an invariant subset of $\CC^{m+1}\setminus\{{\bf 0}\}$ under the ${\bf b}$-action of $\CC^*$ iff ${\bf b}$ is a Pl\"{u}cker weight vector. This paves the way for the definition of the following:

\begin{definition}\label{def_wgt_gsm}
    Let ${\bf{b}}=(b_0, b_1,\dots, b_m)\in (\mathbb{Z}_{\geq 1})^{m+1}$ be a Pl\"{u}cker weight vector. We define a topological space $\G_{\bf{b}}(k,n)$ as the orbit space of the ${\bf b}$-action of $\CC^*$ on $Pl(k,n)$ as in \eqref{eq_weigh_act}. Thus  $$\G_{\bf{b}}(k,n):=\frac{Pl(k,n)}{{\bf b}\text{-action}}.$$
	Consider the quotient map  $$\pi_{\bf{b}}: Pl(k,n)\to \G_{\bf{b}}(k,n)$$ defined by $\pi_{\bf{b}}(z)=[z]$, where $[z]$ is the orbit of $z$ with respect to the ${\bf b}$-action. The topology on $\G_{\bf{b}}(k,n)$ is given by the quotient topology induced by the map $\pi_{\bf{b}}$. 
\end{definition}

\begin{remark}
  If ${\bf{b}}=(1,1,\dots,1)$ then ${\bf{b}}$ is a Pl\"{u}cker weight vector. In this case the space $\G_{\bf{b}}(k,n)$ is the Grassmann manifold $\G(k,n)$. We denote the corresponding quotient map by $$\pi: Pl(k,n)\to \G(k,n).$$
\end{remark}

Now we show that the space $\G_{\bf b}(k,n)$ in Definition \ref{def_wgt_gsm} is a generalization of weighted Grassmannians  $\WG(k,n)$ in \cite[Definition 2.2]{AbMa1}. Let $W:=(w_1,w_2, \dots ,w_n) \in (\mathbb{Z}_{\geq 0})^{n}$ and $a \in\mathbb{Z}_{\geq 1}$. For $k<n$, recall $Pl(k,n)$ from \eqref{eq_pl_kn}. The algebraic torus $(\CC^*)^{n+1}$ acts on $Pl(k,n)$ by the following
\begin{equation}\label{eq_c_action}
	(t_1, t_2, \dots, t_n, t) \sum_{i=0}^m z_{i}e_{\lambda^i}=\sum_{i=0}^m t{\cdot}t_{\lambda^i} z_{i} e_{\lambda^i}.
\end{equation}	

Consider a subgroup $WD$ of $(\CC^*)^{n+1}$ defined by \begin{equation}\label{eq_wd}
	   	WD:=\{(t^{w_1},t^{w_2},\dots,t^{w_n},t^a):t\in \CC^*\}.  
\end{equation}
Then there is a restricted action of $WD$ on $Pl(k,n)$. Abe and Matsumura denoted the quotient $\frac{Pl(k,n)}{WD}$  by $\WG(k,n)$ and call it a weighted Grassmannian in \cite{AbMa1}. For all $i\in\{0,1,\dots,m\}$, let  
\begin{equation}\label{eq_wli}
	b_i=w_{\lambda^i} :=a+\sum_{j=1}^{k} w_{\lambda_j^i},
\end{equation}
where $\lambda^i = (\lambda_1^i,\lambda_2^i,\dots,\lambda_k^i)$ is the $i$-th Schubert symbol given in \eqref{eq_total_ord}. Then $b_i\geq 1$ for all $i\in\{0,1,\dots,m\}$. Consider the weight vector ${\bf{b}}=(b_0,b_1,\dots,b_m)\in (\ZZ_{\geq 1})^{m+1}$. From the Definition \ref{def_plu_vec}, it follows that ${\bf{b}}$ is a Pl\"{u}cker weight vector. Note that action of $WD$  on $Pl(k,n)$ is same as the ${\bf{b}}$-action of $\CC^*$ on $Pl(k,n)$. Thus $\G_{\bf{b}}(k,n)$ is same as the weighted Grassmannian $\WG(k,n)$, where the Pl\"{u}cker weight vector ${\bf{b}}=(b_0,\dots,b_m)$ is defined using \eqref{eq_wli}.

\begin{remark}\label{rem_2_4_case}
  Let $n=4$ and $k=2$. Then $m={4\choose 2}-1=5$. Recall the $6$ Schubert symbols from Example \ref{ex_pl_wgt_vec}.  Consider a weight vector ${\bf{b}}=(5,1,4,3,6,2)\in (\ZZ_{\geq 1})^6$. Then ${\bf{b}}$ is a Pl\"{u}cker weight vector follows from Example \ref{ex_pl_wgt_vec}. If there exist $W=(w_1,w_2,w_3,w_4)\in (\ZZ_{\geq 0})^4$ and $a\in \ZZ_{\geq 1}$ such that $b_i=w_{\lambda^i}$ then $b_1=w_1+w_3+a=1$. Now $W\in (\ZZ_{\geq 0})^4$ implies that $b_1\geq a$. The only choice for $a$ is $1$. Then we have $w_1+w_3=0$. Also $w_1-w_3=b_0-b_3=2$. Which gives $w_1=1$, $w_3=-1$, $w_2=3$ and $w_4=2$. Thus $W=(1,-1,3,2)\notin (\ZZ_{\geq 0})^4$. 
  
  If we choose another Pl\"{u}cker weight vector ${\bf{b}}=(5,1,4,4,7,3)$, then by a similar calculation as above, we get $W=(1/2,7/2,-1/2,5/2)$ and $a=1$. Moreover, ${\bf b}$ also corresponds to $W=(0,3,-1,2)$ and $a=2$.
\end{remark}

In the following Proposition, we prove that for a Pl\"{u}cker weight vector ${\bf b}=(b_0,\dots,b_m)$, there exist solutions of $W$ and $a$ in a broader class.

\begin{proposition}\label{prop_rel_two_wt}
Let ${\bf{b}}=(b_0, b_1,\dots, b_m)\in (\mathbb{Z}_{\geq 1})^{m+1}$ be a Pl\"{u}cker weight vector for $k<n$. Then there exist $a\in \{1,2,\dots,k\}$ and $W=(w_1,w_2,\dots,w_n) \in \mathbb{Z}^{n}$ such that $b_i=w_{\lambda^i}$ for all $i=0,1,\dots,m$, where $w_{\lambda^i}$ is defined in \eqref{eq_wli}.
\end{proposition}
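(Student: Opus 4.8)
The plan is to reconstruct the ``single--index weights'' $w_1,\dots,w_n$ from the pairwise differences of the $b_i$, and then read off $a$. The starting observation is that for any two indices $s\neq s'$ in $\{1,\dots,n\}$ and any $(k-1)$-subset $A\subseteq\{1,\dots,n\}\setminus\{s,s'\}$ (such an $A$ exists precisely because $k<n$), the quantity
\[
d_A(s,s'):=b_{A\cup\{s\}}-b_{A\cup\{s'\}}
\]
should be independent of $A$. To see this, given a second such subset $A'$, the four $k$-subsets $A\cup\{s\}$, $A'\cup\{s'\}$, $A\cup\{s'\}$, $A'\cup\{s\}$ satisfy $(A\cup\{s\})\cup(A'\cup\{s'\})=(A\cup\{s'\})\cup(A'\cup\{s\})$ as multisets (both equal $A\uplus A'\uplus\{s,s'\}$), so Remark~\ref{rem_plu_wgt_vec} yields $b_{A\cup\{s\}}+b_{A'\cup\{s'\}}=b_{A\cup\{s'\}}+b_{A'\cup\{s\}}$, i.e. $d_A(s,s')=d_{A'}(s,s')$. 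I therefore obtain a well-defined function $d(s,s')$.

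Next I would check that $d$ is an additive cocycle: $d(s,s')=-d(s',s)$ is immediate, and $d(s,s')+d(s',s'')=d(s,s'')$ follows by evaluating all three terms with a common $(k-1)$-subset $A$ disjoint from $\{s,s',s''\}$ (which exists when $n\geq k+2$; when $n=k+1$ each $d$ is forced and the identity telescopes directly). Fixing a reference index, say $s_0=n$, I set $w_s:=d(s,s_0)$ for all $s$, so that $w_s-w_{s'}=d(s,s')$ by the cocycle property. Then for a single-element exchange $\mu=(\lambda\setminus\{s\})\cup\{s'\}$ one computes $b_\mu-b_\lambda=d(s',s)=w_{s'}-w_s$, whence $a_\lambda:=b_\lambda-\sum_{s\in\lambda}w_s$ is unchanged under exchanges. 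Since the graph on $k$-subsets whose edges are single-element exchanges is connected, $a_\lambda$ is a constant integer $a'$, giving $b_i=a'+\sum_{s\in\lambda^i}w_s$ for every $i$.

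Finally I would normalize $a'$ into the required range. Replacing each $w_s$ by $w_s+c$ for $c\in\ZZ$ leaves every $b_i$ unchanged while sending $a'\mapsto a'-kc$; since $\{1,2,\dots,k\}$ is a complete set of residues modulo $k$, I can choose $c$ so that $a:=a'-kc\in\{1,\dots,k\}$. The resulting $W=(w_1,\dots,w_n)\in\ZZ^n$ and $a\in\{1,\dots,k\}$ then satisfy $b_i=w_{\lambda^i}$, as required (note $W$ need not be nonnegative, which is exactly why the earlier integral attempts in the preceding remarks failed). I expect the main obstacle to be the well-definedness of $d$: this is the one place where the full strength of the Pl\"ucker condition (via Remark~\ref{rem_plu_wgt_vec}) is essential, and some care is needed to confirm that enough disjoint $(k-1)$-subsets exist in the boundary case $n=k+1$ so that both the definition of $d$ and the cocycle identity remain valid.
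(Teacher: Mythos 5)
Your proof is correct and follows essentially the same route as the paper's: both arguments use the Pl\"ucker condition (via Remark \ref{rem_plu_wgt_vec}) to show that the differences $w_s-w_{s'}$ are well defined by $b_{A\cup\{s\}}-b_{A\cup\{s'\}}$, recover $W$ up to a global shift, and then use the residue of the leftover constant modulo $k$ to place $a$ in $\{1,\dots,k\}$. Your write-up is in fact slightly more thorough than the paper's, since the cocycle identity together with the connectivity of the single-exchange graph explicitly verifies that $b_i=w_{\lambda^i}$ holds for \emph{all} $i$, a point the paper leaves implicit after solving only the equations it selects.
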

\begin{proof}
Consider the linear equations defined by $w_{\lambda^i}=b_i$ for all $i=0,1,\dots,m$ with variables $w_1,w_2,\dots,w_n \text { and }a$. Let us fix the variable $w_1$. For every $j$ satisfying $2\leq j \leq n$ we can choose two Schubert symbols $\lambda^{r_j}$ and $\lambda^{s_j}$ such that $(1,j)\in \text{inv}(\lambda^{r_j})$ and $(1,j)\lambda^{r_j}=\lambda^{s_j}$.
Then we have linear equations of the form 
\begin{equation}\label{eq_int b_i}
w_1-w_{j}=b_{r_j}-b_{s_j};\text{ for } 2\leq j\leq n.	
\end{equation}
Note that if there exists two Schubert symbols $\lambda^{q_j}$ and $\lambda^{t_j}$ such that $(1,j)\in \text{inv}(\lambda^{q_j})$ and $(1,j)\lambda^{q_j}=\lambda^{t_j}$ for then $\lambda^{r_j}\cup \lambda^{t_j}=\lambda^{q_j}\cup \lambda^{s_j}$. Since ${\bf b}$ is a Pl\"{u}cker weight vector, using Remark \ref{rem_plu_wgt_vec}, $b_{r_j}+b_{t_j}=b_{q_j}+b_{s_j}$. So,  $b_{r_j}-b_{s_j}=b_{q_j}-b_{t_j}$. Thus \eqref{eq_int b_i} is well defined if ${\bf b}$ is a Pl\"{u}cker weight vector.  
   
 Now fix a Schubert symbol $\lambda^i=(2,3,\dots,k+1)$. We add the $k$ equations of \eqref{eq_int b_i} for $2\leq j\leq k+1$ with the equation $w_{\lambda^i}=b_i$ and we get
  $$kw_1+a=b_i+\sum_{j=2}^{k+1}(b_{r_j}-b_{s_j}).$$ 
  Note that the right side of the above equation is an integer. This gives a solution for $a \in \{1,2,\dots,k\}$ which satisfies
  $$b_i+\sum_{j=2}^{k+1}(b_{r_j}-b_{s_j}) \equiv a ~(\text{mod } k).$$
Thus $kw_1 \equiv 0 ~(\text{mod } k)$. 
Then, we get an integer solution for $w_1$. Consequently, we get integer solutions for $w_j$ for all $2\leq j\leq n$ using \eqref{eq_int b_i}. Thus all the $w_j$ are integers.   
\end{proof}

\begin{remark}
The Pl\"{u}cker weight vector corresponding to $W=(w_1,\dots,w_n)\in \ZZ^n$ and $a\in \ZZ$ is same as the  Pl\"{u}cker weight vector corresponding to $W_\alpha=(w_1-\alpha,\dots,w_n-\alpha)\in \ZZ^n$ and $a_\alpha=a+k\alpha$ for any $\alpha \in \ZZ$.
\end{remark}
Thus, Definition \ref{def_wgt_gsm} broadens the class of weighted Grassmannian in \cite{AbMa1} as well as \cite{BS} in terms of Pl\"{u}cker weight vector. The authors in \cite{AbMa1} discussed the orbifold structure of $\WG(k,n)$ for $W=(w_1,\dots,w_n)\in (\ZZ_{\geq 0})^n$ and $a\in \ZZ_{\geq 1}$. Using the argument of \cite[Subsection 2.2]{AbMa1} and Proposition \ref{prop_rel_two_wt}, the quotient space $\G_{\bf{b}}(k,n)$ has an orbifold structure for a Pl\"{u}cker weight vector ${\bf{b}}$. We call the space $\G_{\bf{b}}(k,n)$ a weighted Grassmann orbifold associated to the Pl\"{u}cker weight vector ${\bf{b}}$.

\subsection{$q$-CW complex structures of weighted Grassmann orbifolds}\label{subsec_q_cell_stu}
  
In this subsection, we discuss a $q$-CW complex structure of $\G_{\bf{b}}(k,n)$. This $q$-CW complex structure induces a building sequence on $\G_{\bf b}(k,n)$ and we explore the cofibrations that appear at each stage of the building sequence. A CW complex structure of $\G(k,n)$ given by 
\begin{equation*}
  \G(k,n)=\bigsqcup_{i=0}^m {E}(\lambda^i), \mbox{ where } 
\end{equation*}
\begin{equation}\label{def_e_lam}
    {E}(\lambda^i)=\{[(z_0,z_1,\dots,z_m)]\in \G(k,n)~|~ z_{i}=1 ,z_{j}= 0 {\mbox{ for }} j<i\}.
\end{equation}

 Let $\lambda^i=(\lambda^i_1,\lambda^i_2,\dots,\lambda^i_k)$ denote the $i$-th Schubert symbol. Then every element of $E(\lambda^i)$ has a presentation $[A]=[a_1\wedge a_2\wedge\dots \wedge a_k]$, where 
 \begin{equation}\label{eq_row_mat}
a_s:=(a_{s1},a_{s2},\dots,a_{sn})\in \CC^n, 
\end{equation}
such that ${a}_{s\lambda^i_s}=1$ and ${a}_{sj}=0$ for $j<\lambda^i_s$ and $j\in \{\lambda^i_{s+1},\dots,\lambda^i_k\}$,  see \cite[Section 1.1]{ike}.

Then $E(\lambda^i)$ is an open cell of $\G(k,n)$ of codimension $2d(\lambda^i)$, where
$$d(\lambda^i):=(\lambda^{i}_1-1)+(\lambda^{i}_2-2)+ \cdots + (\lambda^{i}_k-k).$$

Consider $\widetilde{E}(\lambda^i):=\pi^{-1}(E(\lambda^i))\subset Pl(k,n)$. Then $Pl(k,n)=\bigsqcup_{i=0}^m\widetilde{E}(\lambda^i)$. Moreover, $$\widetilde{E}(\lambda^i)=\{[(z_0,z_1,\dots,z_m)]\in Pl(k,n)~|~ z_{i}\neq0 ,z_{j}= 0 {\mbox{ for }} j<i\}.$$ Then $\widetilde{E}(\lambda^i)$ is homeomorphic to ${E}(\lambda^i)\times \CC^{*}$ via the map
$$f:\widetilde{E}(\lambda^i)\to {E}(\lambda^i)\times \CC^{*} \text{ defined by } f(z)=([z],z_i).$$   The inverse map of $f$ is defined by
$${E}(\lambda^i)\times \CC^{*}\to \widetilde{E}(\lambda^i); ~~([z],s) \to s(z_i)^{-1}z.$$

 The $\bf b$-action of $\CC^*$ on $Pl(k,n)$ restricts a $\bf b$-action of $\CC^*$ on $\widetilde{E}(\lambda^i)$. Define a $\bf b$-action of $\CC^*$ on $E(\lambda^i)$ by 
\begin{align}\label{eq_t_act}
    t[(z_0,\dots,z_m)]=[(t^{b_0}z_0,\dots,t^{b_m}z_m)] \text{ for } t\in \CC^*.
\end{align}

 This gives rise to a ${\bf{b}}$-action of $\CC^*$ on ${E}(\lambda^i)\times \CC^{*}$ defined by $$t([z],s)=([tz], t^{b_i}s).$$

Then $f$ is $\CC^*$-equivariant with respect to the above ${\bf{b}}$-actions. Let $G(b_i)$ be the finite subgroup of $\CC^*$ defined by 
$$G(b_i)=\{t\in \CC^*~|~t^{b_i}=1\}.$$
Thus using \cite[Lemma 2.6]{BS}

$$ \pi_{\bf{b}}(\widetilde{E}(\lambda^i)) \cong \frac{{E}(\lambda^i)\times \CC^{*}}{{\bf b}\text{-action}} \cong \frac{{E}(\lambda^i)}{G(b_i)}.$$

Therefore a $q$-cell decomposition of $\G_{\bf b}(k,n)$ is given by $\G_{\bf{b}}(k,n)=\sqcup_{i=0}^m\frac{E(\lambda^i)}{G(b_i)}.$

\subsection{Lens complex and induced cofibration}
Next, we recall lens complex from \cite{Ka}. Let $(c_0, c_1,\dots, c_m)\in (\mathbb{Z}_{\geq 1})^{m+1}$. For each $i\in \{0,1,\dots,m\}$ consider a unit sphere $$S^{2m-1}_i=\{(z_0,\dots,z_{i-1},0,z_i\dots,z_m)\in \CC^{m+1}\setminus \{{\bf 0}\}~|~\sum_{j\neq i;~j=0}^{m}|z_j|^2=1\}.$$  

\begin{definition}\cite{Ka}\label{def_twist_act}
 Consider an action of $G(c_i)$ on $S^{2m-1}_i$ defined by 
\begin{equation}\label{gcm act}
    t(z_0,z_1,\dots, z_{i-1},0,z_{i+1},\dots,z_{m})=(t^{c_0}z_0,t^{c_1}z_1,\dots,t^{c_{i-1}}z_{i-1},0,t^{c_{i+1}}z_{i+1},\dots,t^{c_{m}}z_{m}),
\end{equation}
	where $t\in G(c_i)$. The quotient space $$L'(c_i;(c_0,c_1,\dots,c_{i-1},c_{i+1},\dots,c_m)):= \dfrac{S^{2m-1}_i}{G(c_i)}$$ is called the lens complex.
	If $\{c_j:0\leq j\leq m,j\neq i\}$ are prime to $c_i$ then $L'(c_i;(c_0,\dots,c_{i-1},$ $c_{i+1},\dots,c_m))$ is known as usual generalized lens space. 
\end{definition}

Next, we show that the lens complex arises naturally as a cofiber in the building sequence induced from the $q$-CW complex structure of $\G_{\bf b}(k,n)$.
For each $i \in \{0,1,\dots, m\}$, define $X^{m-i}:=\bigsqcup_{j\geq i}\frac{E(\lambda^j)}{G(b_j)}.$ 
 The building sequence corresponding to the $q$-CW complex structure of $\G_{\bf b}(k,n)$ is
\begin{equation}\label{filtration}
	\{pt\}=X^0 \subset X^1 \subset X^2 \subset \dots \subset X^m=\G_{\bf b}(k,n).
\end{equation} 

For every $i\in \{0,1,\dots,m\}$ consider a subset of Schubert symbol 
\begin{equation}\label{eq_R_lambda}
I(\lambda^i)=\{\lambda^j: \lambda^j=(\lambda^i_s,r)\lambda^i \text{ for } (\lambda^i_s,r)\in {\rm{inv}}(\lambda^i) \text{ and } s=1,2,\dots,k\}.
\end{equation}

Note that $|I(\lambda^i)|=k(n-k)-d(\lambda^i)$ for all $i=0,1,\dots,m$. Let $d'(\lambda^i):=k(n-k)-d(\lambda^i)$. Consider $ 
    \CC^{d'(\lambda^i)}\subset \CC^{m+1}$ by \begin{equation}\label{eq_c_d_i}
    \CC^{d'(\lambda^i)}=\{(z_0,z_1,\dots,z_m)~|~z_j=0 \text{ if } \lambda^j\notin I(\lambda^i)\}.
\end{equation}

Then, ${E}(\lambda^i)$ is homeomorphic to $\CC^{d'(\lambda^i)}$ by the following map
$$g\colon {E}(\lambda^i)\to \CC^{d'(\lambda^i)}.$$

 The $j$-th entry of
$g([a_1\wedge\dots\wedge a_k])$ is equal to $a_{sr}$ (the $r$-th entry of $a_s$) if $\lambda^j=(\lambda^i_s,r)\lambda^i$ for some $1\leq s\leq k$, $1\leq r\leq n$ and $r\notin\{\lambda^i_1,\dots,\lambda^i_k\}$. 
This defines both $g$ and $g^{-1}$ completely and both $g$ and $g^{-1}$ are continuous.

 Consider the $G(b_i)$-action on $\CC^{d'(\lambda^i)}$ by the restriction of the ${\bf{b}}$-action of $\CC^*$ on $\CC^{m+1}$ as in \eqref{eq_weigh_act}. Recall the $G(b_i)$ action on $E(\lambda^i)$ as in \eqref{eq_t_act}. Then $g$ is a $G(b_i)$-equivariant homeomorphism. Moreover,
$$ \frac{E(\lambda^i)}{G(b_i)} \cong \frac{\CC^{d'(\lambda^i)}}{G(b_i)}.$$

Next, we define weight vectors $ {\bf{b}}^{(i)}$ for every $i=0,1,\dots,m$ induced from a  Pl\"{u}cker weight vector ${\bf b}=(b_0,b_1,\dots,b_m)$. The weight vector $ {\bf{b}}^{(i)}$ is defined by substituting $b_j=0$ in $(b_0,b_1,\dots,b_m)$ if $\lambda^j\notin I(\lambda^i)$. Moreover, for every $i=0,1,\dots,m$, we define a lens complex $L'(b_i;{\bf{b}}^{(i)})\subset L'(b_i;(b_0,\dots,b_{i-1},b_{i+1},\ldots,b_{m}))$ by the following:
$$L'(b_i;{\bf{b}}^{(i)}):=\{[(z_0,z_1,\dots,z_{m})]\in L'(b_i;(b_0,\dots,b_{i-1},b_{i+1},\ldots,b_{m}))~|~z_j=0 \text{ if } \lambda^j \notin I(\lambda^i) \}$$
\begin{proposition}\label{prop_cofibre}
	$L'(b_i;{\bf{b}}^{(i)}) \hookrightarrow X^{i-1} \to X^i$ is a cofibration for every $i=1,\dots,m$.
\end{proposition}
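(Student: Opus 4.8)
The plan is to realize $X^i$ as the mapping cone of an attaching map $\phi_i\colon L'(b_i;{\bf b}^{(i)})\to X^{i-1}$; the asserted cofibration $L'(b_i;{\bf b}^{(i)})\hookrightarrow X^{i-1}\to X^i$ is then exactly the cofiber sequence of an adjunction of a cone, which is a (closed Hurewicz) cofibration by the standard property of mapping cones. The first point is that the closed $q$-cell is a cone on the lens complex. The group $G(b_i)$ acts on $\CC^{d(\lambda^i)}$ through the weights $(b_j)_{\lambda^j\in R(\lambda^i)}$, i.e.\ $\zeta\cdot v=(\zeta^{b_j}v_j)_{\lambda^j\in R(\lambda^i)}$, which is a unitary action and hence preserves the radius. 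Therefore the radial homeomorphism $\overline{D^{2d(\lambda^i)}}\cong C\big(S^{2d(\lambda^i)-1}\big)$ is $G(b_i)$-equivariant and descends to
\[
\frac{\overline{D^{2d(\lambda^i)}}}{G(b_i)}\;\cong\;C\!\left(\frac{S^{2d(\lambda^i)-1}}{G(b_i)}\right)\;=\;C\big(L'(b_i;{\bf b}^{(i)})\big),
\]
with cone point the origin and open cone the open $q$-cell $\CC^{d(\lambda^i)}/G(b_i)=X^i\setminus X^{i-1}$.

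Next I would construct the characteristic map $\Phi_i\colon C(L'(b_i;{\bf b}^{(i)}))\to X^i$. Writing $\widetilde X^i=\{z\in Pl(k,n)\mid z_j=0\text{ for }j>i\}$, we have $X^i=\pi_{\bf b}(\widetilde X^i)=\widetilde X^i/\CC^*$. I would lift the classical Schubert-cell characteristic map for $\G(k,n)$ (see \cite{MiSt}) to a $\CC^*$-equivariant map into $Pl(k,n)$: from the free coordinates $(v_j)_{\lambda^j\in R(\lambda^i)}$ reconstruct the echelon frame $a_1,\dots,a_k$ and take the wedge $a_1\wedge\dots\wedge a_k\in\widetilde X^i$, inserting a radial factor so that the $i$-th Pl\"{u}cker coordinate $z_i$ is nonzero for $|v|<1$ and vanishes for $|v|=1$. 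On $|v|<1$ the image lies in $\widetilde E(\lambda^i)$ and, after applying $\pi_{\bf b}$, agrees with the chart homeomorphism onto $\CC^{d(\lambda^i)}/G(b_i)$ obtained from $(g,\mathrm{id})\circ f$; on $|v|=1$ one has $z_i=0$, so the image lies in $\widetilde X^{i-1}$ and maps into $X^{i-1}$. The construction being $\CC^*$-, in particular $G(b_i)$-, equivariant, $\Phi_i$ descends to the quotient $C(L'(b_i;{\bf b}^{(i)}))$, and $\phi_i:=\Phi_i|_{L'(b_i;{\bf b}^{(i)})}$ is the attaching map into $X^{i-1}$.

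Finally I would check the adjunction-space structure. Each ${\bf b}$-orbit in $\widetilde X^i$ meets the unit sphere $\widetilde S^i=\{z\in\widetilde X^i\mid\sum_{j\le i}|z_j|^2=1\}$ in a single orbit of the compact circle $S^1\subset\CC^*$, whence $X^i\cong\widetilde S^i/S^1$ is compact and Hausdorff. Since $X^{i-1}$ and $C(L'(b_i;{\bf b}^{(i)}))$ are compact, the map $X^{i-1}\sqcup C(L'(b_i;{\bf b}^{(i)}))\to X^i$ induced by the inclusion and $\Phi_i$ is a continuous surjection from a compact space to a Hausdorff space, hence closed, hence a quotient map. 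As it is the inclusion on $X^{i-1}$, a homeomorphism from the open cone onto $X^i\setminus X^{i-1}$, and $\phi_i$ on the boundary, it identifies $X^i$ with the adjunction space $X^{i-1}\cup_{\phi_i}C(L'(b_i;{\bf b}^{(i)}))$, i.e.\ with the mapping cone of $\phi_i$. The cofibration follows.

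The main obstacle is the middle step: producing $\Phi_i$ in Pl\"{u}cker coordinates and proving it is continuous up to the boundary. The coordinates $(v_j)_{\lambda^j\in R(\lambda^i)}$ are linear in the frame, but the remaining Pl\"{u}cker coordinates $z_j$ with $\lambda^j\notin R(\lambda^i)$ are higher-degree polynomials in them, so one must check that the radial degeneration drives the point continuously into $X^{i-1}$ while respecting every Pl\"{u}cker relation and remaining $G(b_i)$-equivariant (and that the limit on $S^{2d(\lambda^i)-1}$ factors through the quotient by $G(b_i)$). Once this explicit attaching map is established, the cone identification and the passage to the adjunction space are routine.
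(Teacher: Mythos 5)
Your strategy is the same as the paper's: identify the closed $q$-cell with the cone on $L'(b_i;{\bf b}^{(i)})$ and realize $X^i$ as the mapping cone of an attaching map into $X^{i-1}$. The paper's own proof is much terser --- it simply notes that $X^i=X^{i-1}\sqcup \CC^{d(\lambda^i)}/G(b_i)$ and that $\CC^{d(\lambda^i)}/G(b_i)$ is an unbounded cone over $L'(b_i;{\bf b}^{(i)})$, taking the attaching-map data for granted as part of the $q$-CW structure inherited from Subsection 2.2 and the references \cite{BS,AbMa1}. Your cone identification, the compactness argument via $X^i\cong \widetilde S^i/S^1$, and the closed-surjection-onto-Hausdorff reasoning are all sound. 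The one place that needs repair is the step you yourself flag: the recipe of ``inserting a radial factor so that $z_i$ vanishes for $|v|=1$'' does not work as stated, because scaling the pivot entries of the echelon frame by a common factor vanishing at $|v|=1$ can kill an entire row $a_s$ (when all free entries of that row happen to be zero while other rows carry the norm), so the wedge $a_1\wedge\dots\wedge a_k$ degenerates to $0\notin Pl(k,n)$. The correct characteristic map is the Milnor--Stasheff one, in which each row is parametrized by a point of a closed hemisphere and remains a unit vector on the boundary; one then checks that this product-of-disks parametrization is $G(b_i)$-equivariantly compatible (by radial reparametrization) with the standard cone on $S^{2d(\lambda^i)-1}$ used to define $L'(b_i;{\bf b}^{(i)})$. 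With that substitution your argument goes through and in fact supplies the detail the paper leaves implicit.
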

\begin{proof}
We consider $\CC^{d'(\lambda^{i})}$ from \eqref{eq_c_d_i} and the unit circle on $\CC^{d'(\lambda^{i})}$ by $$S^{2d'(\lambda^{i})-1}=\{(z_0,\dots,z_m)\in \CC^{d'(\lambda^{i})}~|~ \sum_{i=0}^m|z_i|^2=1\}. \text{ Then }$$  $$L'(b_i;{\bf{b}}^{(i)})= \dfrac{S^{2d'(\lambda^{i})-1}}{G(b_i)}.$$ Using the $q$-CW complex decomposition $X^i=X^{i-1}\sqcup \frac{\CC^{d'(\lambda^{i})}}{G(b_i)}$. Now the result follows from the fact that $\frac{\CC^{d'(\lambda^{i})}}{G(b_i)}$ is an unbounded cone over $L'(b_i;{\bf{b}}^{(i)})$.
\end{proof}

\section{Classification of weighted Grassmann orbifolds upto certain homeomorphism}\label{sec_clss_prob}

In this section, we introduce the Pl\"{u}cker permutation, a permutation on Pl\"{u}cker coordinates up to some sign. Then, we prove that a Pl\"{u}cker permutation induces an equivariant homeomorphism between two weighted Grassmann orbifolds. We also show that two weighted Grassmann orbifolds are homeomorphic if their Pl\"{u}cker weight vectors are differed by a scalar multiplication. We discuss certain equivariant rigidity of weighted Grassmann orbifolds in terms of {\Pl} weight vector. Moreover, we provide a conjecture that classify weighted Grassmann orbifolds up to homeomorphism.

Let $\sigma$ be a permutation on $\{0,1,2,\dots,m\}$ and $z=(z_0,\dots,z_m)\in \CC^{m+1}\setminus \{{\bf 0}\}$. Define $\sigma z:=(z_{\sigma(0)},z_{\sigma(1)},\dots,z_{\sigma(m)})$. Consider a sign vector $t=(t_0,\dots,t_m)\in \{1,-1\}^{m+1}$, i.e., $t_i\in \{1,-1\}$ for all $i=0,1,\dots,m$. Define $t{\cdot}\sigma z:=(t_0z_{\sigma(0)},t_1z_{\sigma(1)},\dots,t_mz_{\sigma(m)})$.

\begin{definition}
A permutation $\sigma$ on $\{0,1,\dots,m\}$ is said to be Pl\"{u}cker permutation if there exist a sign vector $t_\sigma\in \{1,-1\}^{m+1}$ such that $t_{\sigma}{\cdot}\sigma z\in Pl(k,n)$ for every $z\in Pl(k,n)$. 
\end{definition}

Let $\sigma$ and $\eta$ be two Pl\"{u}cker permutations on $\{0,1,2,\dots,m\}$.  Then $\sigma\circ \eta$ is also a Pl\"{u}cker permutation on $\{0,1,2,\dots,m\}$. Note that, $\sigma^{-1}=\sigma^i$ for some positive integer $i$. Thus $\sigma^{-1}$  is also a Pl\"{u}cker permutation on $\{0,1,2,\dots,m\}$. Hence the Pl\"{u}cker permutations on $\{0,1,\dots,m\}$ form a subgroup of the permutation group on $\{0,1,\dots,m\}$. 

\begin{remark}
 The permutation group $S_n$ can be viewed as a subgroup of the Pl\"{u}cker permutations on $\{0,1,\dots,m\}$ if $m+1={n\choose k}$. To verify this, consider a Schubert symbol $\lambda^i =(\lambda_1^i, \ldots, \lambda_k^i)$ for $i\in \{0,1, \ldots, m\}$ and a permutation $\phi \in S_n$. Define $\phi\lambda^i :=(\phi(\lambda_{i_1}^i), \ldots, \phi(\lambda_{i_k}^i))$, where $\{i_1,\dots,i_k\}=\{1,\dots,k\}$ such that $1\leq \phi(\lambda_{i_1}^i)<\cdots <\phi(\lambda_{i_k}^i)\leq n$. Thus $\phi$ gives rise to a permutation $\sigma$ on $\{0,1,\dots,m\}$ defined by $\phi \lambda^i=\lambda^{\sigma(i)}$. Moreover, $\sigma$ is a Pl\"{u}cker permutation on $\{0,1,\dots,m\}$ follows from \eqref{eq_plu_coord}. 

There are many Pl\"{u}cker permutations on $\{0,1,\dots,m\}$ those do no arise from $S_n$. For example Consider $d=2$, $n=4$, $m+1=6$. Recall the total order on Schubert symbol as in Example \ref{ex_pl_wgt_vec}. Define a permutation $\sigma\in \{0,1,2,3,4,5\}$ by $\sigma(0)=5, \sigma(5)=0$ and $\sigma(i)=i$ for the remaining $i$. Then it is a Pl\"{u}cker permutation corresponding to the sign vector $t_\sigma=(1,1,1,1,1,1)$. Note that there does not exist $\phi\in S_4$ such that $\phi \lambda^i=\lambda^{\sigma(i)}$. 
\end{remark}

\begin{lemma}
 Let $\sigma$ be a Pl\"{u}cker permutation. Then $\sigma {\bf{b}}$ is a Pl\"{u}cker weight vector for every Pl\"{u}cker weight vector ${\bf{b}}$.	
\end{lemma}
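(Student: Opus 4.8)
The plan is to sidestep any direct manipulation of the Plücker relations and instead invoke the criterion recorded just after Remark \ref{rmk_pl_rln}: a weight vector ${\bf c}\in(\ZZ_{\geq 1})^{m+1}$ is a Pl\"ucker weight vector if and only if $Pl(k,n)$ is invariant under the ${\bf c}$-action of $\CC^*$ from \eqref{eq_weigh_act}. So to prove that $\sigma{\bf b}$ is a Pl\"ucker weight vector it suffices to show that the $\sigma{\bf b}$-action of every $s\in\CC^*$ carries $Pl(k,n)$ into itself, and the engine for this is the signed permutation map attached to $\sigma$.

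Concretely, I would first package the Pl\"ucker permutation as the linear map $L\colon \CC^{m+1}\to\CC^{m+1}$, $L(z)=t_\sigma\cdot\sigma z$, so that $(Lz)_i=(t_\sigma)_i z_{\sigma(i)}$. By the definition of a Pl\"ucker permutation, $L(Pl(k,n))\subseteq Pl(k,n)$. Since $L$ is a signed permutation matrix it lies in the finite group of such matrices, so $L^N=\mathrm{id}$ for some $N\geq 1$; consequently $L^{-1}=L^{N-1}$ also maps $Pl(k,n)$ into itself, and $L$ restricts to a bijection of $Pl(k,n)$. This disposes of the one point that needs care, namely that both $L$ and $L^{-1}$ preserve $Pl(k,n)$.

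The heart of the argument is the conjugation identity
\[
 s\star_{\sigma{\bf b}} z \;=\; L\bigl(s\cdot_{\bf b}(L^{-1}z)\bigr)\qquad(s\in\CC^*,\ z\in\CC^{m+1}),
\]
where $\star_{\sigma{\bf b}}$ and $\cdot_{\bf b}$ denote the $\sigma{\bf b}$- and ${\bf b}$-actions. This is a direct computation: writing $(L^{-1}z)_a=(t_\sigma)_{\sigma^{-1}(a)}z_{\sigma^{-1}(a)}$, the ${\bf b}$-action multiplies the $a$-th entry by $s^{b_a}$, and applying $L$ takes the entry in slot $\sigma(i)$ and reintroduces the sign $(t_\sigma)_i$; the two signs $(t_\sigma)_i$ square to $1$ and the exponent surviving at slot $i$ is $b_{\sigma(i)}=(\sigma{\bf b})_i$, which is exactly the $\sigma{\bf b}$-action. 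Granting the identity, for $z\in Pl(k,n)$ we have $L^{-1}z\in Pl(k,n)$, then $s\cdot_{\bf b}(L^{-1}z)\in Pl(k,n)$ because ${\bf b}$ is a Pl\"ucker weight vector (the criterion above), and finally $L$ sends this back into $Pl(k,n)$. Hence the $\sigma{\bf b}$-action preserves $Pl(k,n)$, and running the criterion in the reverse direction shows $\sigma{\bf b}$ is a Pl\"ucker weight vector.

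The only real obstacle is the bookkeeping in the conjugation identity, specifically producing $\sigma$ rather than $\sigma^{-1}$ in the final exponent and verifying that the $\pm1$ signs cancel; everything else is formal. As an alternative I could argue combinatorially through Remark \ref{rem_plu_wgt_vec}: it would then suffice to show that $\sigma$ preserves the relation $\lambda^i\cup\lambda^j=\lambda^r\cup\lambda^q$ on pairs of Schubert symbols, using the test vectors of Remark \ref{rmk_pl_rln} to isolate the individual monomials of a Pl\"ucker relation. That route is more hands-on and requires a non-degeneracy check on the indices of each relation, which is why I would prefer the conjugation argument above.
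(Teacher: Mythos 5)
Your proof is correct and follows essentially the same route as the paper: both arguments conjugate the ${\bf b}$-action by the signed permutation map $L(z)=t_\sigma\cdot\sigma z$ and then invoke the invariance criterion of Remark \ref{rmk_pl_rln} to conclude that $\sigma{\bf b}$ is a Pl\"ucker weight vector. The only difference is cosmetic: the paper applies $L$ after the ${\bf b}$-action and leaves the surjectivity of $L$ on $Pl(k,n)$ implicit (it is covered by the earlier observation that $\sigma^{-1}$ is again a Pl\"ucker permutation), whereas you make that point explicit via the finite order of $L$ --- a small but welcome tightening.
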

\begin{proof}
Let ${\bf{b}}=(b_0,b_1,\dots,b_m)$ be a Pl\"{u}cker weight vector. Then $(t^{b_0}z_0,\dots,t^{b_m}z_m)\in Pl(k,n)$ for every  $z=(z_0,\dots,z_m)\in Pl(k,n)$ and $t\in \CC^*$. Let $\sigma$ is a Pl\"{u}cker permutation on $\{0,1,\dots,m\}$. Then there exists $t_{\sigma}=(t_0,\dots,t_m)\in \{1,-1\}^{m+1}$ such that for every  $z=(z_0,\dots,z_m)\in Pl(k,n)$ we have $t_\sigma{\cdot}\sigma z= (t_0z_{\sigma(0
)},\dots,t_mz_{\sigma(m)})\in Pl(k,n)$ as well as $t_{\sigma}{\cdot} \sigma(t^{b_0}z_0,\dots,t^{b_m}z_m)=(t_0t^{b_{\sigma(0)}}z_{\sigma(0)},\dots,t_mt^{b_{\sigma(m)}}z_{\sigma(m)})\in Pl(k,n)$. Thus using Remark \ref{rmk_pl_rln}, it follows that $\sigma {\bf{b}}=(b_{\sigma(0)},\dots,b_{\sigma(m)})$ is a Pl\"{u}cker weight vector. 
\end{proof}

Pl\"{u}cker permutations play an important role in characterizing the weighted Grassmann orbifold up to a weakly equivariant homeomorphism. Consider the $(\CC^*)^n$-action on $\G_{\bf{b}}(k,n)$ defined by
\begin{equation}\label{eq_c_act}
	(t_1, t_2, \dots, t_n) \sum_{i=0}^m [a_{i}e_{\lambda^i}]=\sum_{i=0}^m [t_{\lambda^i} a_{i} e_{\lambda^i}],
\end{equation} 
where $ t_{\lambda} :=t_{\lambda_1} \cdots t_{\lambda_{k}}$, for the Schubert symbol $\lambda=(\lambda_1, \ldots, \lambda_k)$, for $k < n$.

\begin{lemma}\label{thm_per_plu_vec}
	If $\sigma$ is a Pl\"{u}cker permutation then $\G_{\bf{b}}(k,n)$ is weakly equivariantly homeomorphic to $\G_{\sigma {\bf{b}}}(k,n)$. 
\end{lemma}
\begin{proof}
A Pl\"{u}cker permutation $\sigma$ induces the natural weakly equivariant homeomorphism $\bar{f}_{\sigma} \colon 	Pl(k,n) \to Pl(k,n) $ defined by $\bar{f}_{\sigma}(z)=t_\sigma{\cdot}\sigma z$. Then $\bar{f}_{\sigma}$ induces a homeomorphism $f_\sigma$ between  $\G_{\bf{b}}(k,n)$ and $\G_{\sigma {\bf{b}}}(k,n)$ such that the following diagram commutes.  
	\[ \begin{tikzcd}
		Pl(k,n) \arrow{r}{\bar{f}_\sigma} \arrow{d}{\pi_{\bf{b}}} & Pl(k,n) \arrow{d}{\pi_{\sigma {\bf{b}}}} \\%
		\G_{\bf{b}}(k,n) \arrow{r}{f_{\sigma}} & \G_{\sigma {\bf{b}}}(k,n).
	\end{tikzcd}
	\]
 The homeomorphism $f_{\sigma}$ becomes weakly equivariant with respect to the permuted $(\CC^*)^n$-action on $\G_{\sigma {\bf b}}(k,n)$ defined by
 \begin{equation*}
	(t_1, t_2, \dots, t_n) \sum_{i=0}^m [a_{i}e_{\lambda^i}]=\sum_{i=0}^m [t_{\lambda^{\sigma(i)}} a_{i} e_{\lambda^i}]
\end{equation*} 
\end{proof}

\begin{remark}\label{rem_per_cell}
 If $\sigma$ is a {\Pl} permutation on $\{0,1,\dots,m\}$ then $\Big\{\frac{\CC^{d'(\lambda^i)})}{G(b_{\sigma(i)})}\Big\}_{i=0}^m$ gives a $q$-CW complex structure of $\G_{\sigma{\bf b}}(k,n)$ following Subsection \ref{subsec_q_cell_stu}. 
  As a consequence of Lemma \ref{thm_per_plu_vec}, $\Big\{\frac{\CC^{d'(\lambda^i)})}{G(b_{\sigma(i)})}\Big\}_{i=0}^m$ gives another $q$-CW complex structure of $\G_{\bf{b}}(k,n)$.
\end{remark}

\begin{example}
    Consider a weighted Grassmann orbifold $\G_{\bf b}(2,4)$ for the  Pl\"{u}cker weight vector ${\bf{b}}=(3,2,1,4,3,2)$ which corresponds to $W=(0,2,1,0)$ and $a=1$. Consider the Pl\"{u}cker permutation $\sigma$ on $\{0,1,2,3,4,5\}$ defined by $\sigma(0)=5, \sigma(5)=0$ and $\sigma(i)=i$ for the remaining $i$. Then $\sigma$ is a {\Pl} permutation corresponding to $t_\sigma=(1,1,1,1,1,1)$. Now $\sigma {\bf{b}}=(2,2,1,4,3,3)$ which corresponds to $W'=(-1,1,1,0)$ and $a'=2$. Thus both the weighted Grassmann orbifolds $\WG(2,4)$ corresponding to $(W,a)$ and $(W',a')$ are homeomorphic $\G_{\bf b}(2,4)$ described above.  
\end{example}

Using Definition \ref{def_plu_vec}, it follows that if ${\bf b}=(b_0,b_1,\dots,b_m)$ is a Pl\"{u}cker weight vector then $r{\bf b}:=(rb_0,rb_1,\dots,rb_m)$ is also a Pl\"{u}cker weight vector for any positive integer $r$.

\begin{lemma}\label{lemma_int_red}
     $\G_{\bf b}(k,n)$ is  equivariantly homeomorphic to $\G_{r{\bf b}}(k,n)$ for every positive integer $r$.
\end{lemma}
\begin{proof}
Consider the identity map ${\rm id}\colon Pl(k,n) \to Pl(k,n)$. 
The quotient map $$\pi_{r\bf b}\colon Pl(k,n) \rightarrow \G_{r\bf b}(k,n)$$ induces a map $f$ from $\G_{r\bf b}(k,n)$ to $\G_{{\bf b}}(k,n)$ for any positive integer $r$ such that the following diagram commutes.
\[\xymatrix{
	Pl(k,n) \ar[r]^{\rm{id}} \ar[d]_{\pi_{r{\bf b}}} & Pl(k,n)  \ar[d]^{\pi_{{\bf b}}}\\
	\G_{r \bf b}(k,n) \ar[r]^f_{\cong}  & \G_{\bf b}(k,n).}
\]
Let \(U\) be an open subset of \(\G_{\bf b}(k,n)\).  
Then \((\pi_{\bf b})^{-1}(U) = (\pi_{r{\bf b}})^{-1} \circ f^{-1}(U)\).  
Since \(\pi_{\bf b}\) is a quotient map, the set \((\pi_{\bf b})^{-1}(U)\) is open in 
\(Pl(k,n)\).  
Thus \(f^{-1}(U)\) is an open subset of \(\G_{r{\bf b}}(k,n)\), because 
\(\pi_{r{\bf b}}\) is also a quotient map.  Therefore, \(f\) is continuous.
By a similar argument we can define a map $g$ from $\G_{\bf b}(k,n)$ to $\G_{r\bf b}(k,n)$ such that $$f\circ g={\rm id}_{\G_{\bf b}(k,n)},\text{  and } g\circ f={\rm id}_{\G_{r\bf b}(k,n)}.$$
The map $g=f^{-1}$ is also continuous by the same argument.
 The $(\CC^*)^n$-action on $\G_{\bf b}(k,n)$ and $\G_{r{\bf b}}(k,n)$ are defined in \eqref{eq_c_act}. Hence $f$ becomes a $(\CC^*)^n$-equivariant homeomorphism.
\end{proof}

\begin{theorem}\label{thm_cl_wgt_gsm}
    Two weighted Grassmann orbifolds $\G_{\bf b}(k,n)$ and $\G_{\bf c}(k,n)$ are homeomorphic if the corresponding  Pl\"{u}cker weight vectors ${\bf b}$ and ${\bf c}$ differ by a Pl\"{u}cker permutation and a Scalar multiplication.
\end{theorem}
\begin{proof}
    The proof follows from Lemma \ref{lemma_int_red} together Lemma \ref{thm_per_plu_vec}.
\end{proof}

A weight vector $(c_0,c_1,\dots,c_m)\in (\ZZ_{\geq 1})^{m+1}$ is said to be a primitive weight vector if $\gcd\{c_0,c_1,\dots,c_m\}=1$. Moreover, a primitive weight vector $(c_0,c_1,\dots,c_m)$ is said to be normalized if, for any prime $p$, there exist weights $c_i$ and $c_j$ for $i\neq j$ such that $p$ does not divides both $c_i$ and $c_j$. If a primitive weight vector is not normalized, then we can always transform it into a normalized vector which is unique up to an order.

\begin{proposition}[\cite{BFNR}]
    Two weighted projective spaces $\WW P(b_0,\dots,b_m)$ and $\WW P(c_0,\dots,c_m)$ are homeomorphic if and only if $(b_0,\dots,b_m)$ and $(c_0,\dots,c_m)$ have same normalized form up to an order.
\end{proposition}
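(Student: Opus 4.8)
The plan is to prove the two implications separately, using the uniqueness of the normalized form as a bridge. For the \emph{backward} (``if'') direction it suffices to realize by explicit homeomorphisms the two elementary moves that connect a weight vector to its normalized form: a permutation of the weights, and a single normalization step. A permutation is realized by the corresponding coordinate-permuting self-homeomorphism of $\CC^{m+1}\setminus\{\mathbf{0}\}$, which is $\CC^*$-equivariant for the two (permuted) weighted actions and hence descends to a homeomorphism of quotients. For the normalization step, first divide out the overall $\gcd$ (this does not change the quotient, since $(b_0,\dots,b_m)$ and $(rb_0,\dots,rb_m)$ cut out the same one-parameter subgroup of $(\CC^*)^{m+1}$), so we may assume the vector is primitive; if it is not normalized there is a prime $p$ dividing every $b_i$ except one, say $b_j$, and primitivity forces $p\nmid b_j$, so $\gcd(p,b_j)=1$.

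I would then define, writing the target vector as $(b_0,\dots,b_m)$ with $b_j$ unchanged and every other entry divided by $p$,
\[
\Phi\colon \WW P(b_0,\dots,b_m)\longrightarrow \WW P(b_0/p,\dots,b_j,\dots,b_m/p),\qquad \Phi([z_0:\cdots:z_m])=[z_0:\cdots:z_{j-1}:z_j^{\,p}:z_{j+1}:\cdots:z_m].
\]
Well-definedness follows from the identity $\Phi(t\cdot z)=t^{p}\cdot\Phi(z)$, which shows $\Phi$ sends each source orbit into a single target orbit. Surjectivity is clear, and injectivity is exactly where $\gcd(p,b_j)=1$ enters: two source points with the same image differ by a scalar lift $t$ that is pinned down only up to a $p$-th root of unity $\omega$, and since $\omega\mapsto\omega^{b_j}$ permutes $\mu_p$ when $\gcd(p,b_j)=1$, this ambiguity can be absorbed to produce a genuine relation $[z]=[w]$. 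As $\Phi$ is a continuous bijection of compact Hausdorff spaces (each $\WW P$ being $S^{2m+1}/S^1$), it is a homeomorphism. Iterating the permutation and normalization moves connects any weight vector to its normalized form, which settles the ``if'' direction.

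For the \emph{forward} (``only if'') direction, suppose $\WW P(b)\cong\WW P(c)$. Applying the ``if'' direction to each side replaces them by their (unique) normalized vectors $\hat b,\hat c$, so it suffices to show that two \emph{normalized} weighted projective spaces that are homeomorphic have equal weight vectors up to order. The strategy is to feed the homeomorphism into a homeomorphism-invariant algebraic gadget that remembers the weights; the natural choice is the integral cohomology ring, computed explicitly by Kawasaki \cite{Ka}. A homeomorphism induces a graded ring isomorphism $H^*(\WW P(b);\ZZ)\cong H^*(\WW P(c);\ZZ)$, and from Kawasaki's description one reads off the integers relating the generators in successive even degrees; for a normalized vector these structure integers determine the multiset $\{b_0,\dots,b_m\}$. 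Comparing the two sides then forces $\hat b=\hat c$ up to a permutation.

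The main obstacle is the forward direction, and within it the purely arithmetic step of inverting Kawasaki's formulas. Distinct \emph{non-normalized} vectors can yield homeomorphic spaces, so the cohomological invariant becomes faithful only after normalization; one must verify precisely that the normalized weight multiset is recoverable from the ring structure, which is exactly where the normalization hypothesis does the real work. This recovery is the delicate content established in \cite{BFNR}, whereas the passage from a homeomorphism to a ring isomorphism and the two explicit moves of the ``if'' direction are comparatively routine.
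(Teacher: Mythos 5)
The paper does not prove this proposition at all: it is imported verbatim from \cite{BFNR} as a known classification result, so there is no in-paper argument to compare yours against. Judged on its own terms, your ``if'' direction is complete and correct, and it is the standard argument: invariance of the quotient under rescaling the weight vector (the one-parameter subgroup of $(\CC^*)^{m+1}$ is unchanged), coordinate permutations, and the map $\Phi([z])=[z_0:\cdots:z_j^{\,p}:\cdots:z_m]$ for a normalization step, with injectivity correctly traced to the fact that $\zeta\mapsto\zeta^{b_j}$ permutes the $p$-th roots of unity when $\gcd(p,b_j)=1$ while $\zeta^{b_i}=1$ for $i\neq j$; the compact-Hausdorff argument then upgrades the continuous bijection to a homeomorphism.

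The ``only if'' direction, however, contains a genuine gap, and you have located it yourself: the assertion that for a \emph{normalized} weight vector the multiset $\{b_0,\dots,b_m\}$ can be read off from the graded ring $H^*(\WW P(b);\ZZ)$ is not a routine inversion of Kawasaki's formulas --- it is the main theorem of \cite{BFNR}. Kawasaki's structure constants are ratios of iterated least common multiples of the weights, and extracting the individual weights from these requires a nontrivial prime-by-prime analysis (this is where normalization is actually used, to guarantee that at each prime at least two weights are units). Writing ``this recovery is the delicate content established in \cite{BFNR}'' defers precisely the step that makes the forward implication true, so as a proof the forward direction is incomplete; as an account of where the difficulty lies and how the argument is organized, it is accurate.
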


\begin{lemma}\label{lem_nor_vec}
    Let $k\geq 2$ and ${\bf b}=(b_0,\dots,b_m)\in (\ZZ_{\geq 1})^{m+1}$ be a primitive Pl\"{u}cker weight vector such that $m+1={n \choose k}$. Then $(b_0,\dots,b_m)$ is a normalized Pl\"{u}cker weight vector.
\end{lemma}
\begin{proof}
Let $p$ be a prime and ${\bf b}=(b_0,\dots,b_m)\in (\ZZ_{\geq 1})^{m+1}$ be a primitive Pl\"{u}cker weight vector.  Then there exist at least one $b_i$ such that $p$ does not divide $b_i$. Let $\lambda^i=(\lambda_1^i,\lambda_2^i,\lambda_3^i,\dots,\lambda_k^i)$. We consider $1\leq q,r\leq n$ such that none of $q$ and $r$ is in $\lambda^i$. Define $\lambda^j=(q,r,\lambda_3^i,\dots,\lambda_k^i)$, $\lambda^s=(q,\lambda_2^i,\lambda_3^i,\dots,\lambda_k^i)$, $\lambda^\ell=(\lambda_1^i,r,\lambda_3^i,\dots,\lambda_k^i)$. Then using Remark \ref{rem_plu_wgt_vec}, $b_i+b_j=b_s+b_\ell$. Since $p$ does not divide $b_i$, $p$ does not divide  at least one of $b_j,b_k,b_\ell$. This is true for all prime $p$. Hence the proof.
\end{proof}

Define $$Ad(\lambda^i):=\{\lambda^j: |\lambda^j\cap\lambda^i|=k-1\}.$$ Then cardinality of $Ad(\lambda^i)$ is $k(n-k)$. Corresponding to $Ad(\lambda^i)$ we define 
$${\bf b}_{(i)}:=\{{b_j}:\lambda^j\in Ad(\lambda^i)\} \text{ and }\overline{\bf b}_{(i)}:={\bf b}_{(i)}\cup \{b_i\}.$$

 For $i\in \{0,1,\dots,m\}$ define   
\begin{equation}\label{eq_open_nbd}
   U_i:=\{[z_0,\dots,z_m]\in \G_{\bf b}(k,n)~|~ z_i\neq 0\}. 
\end{equation}

From Section \ref{bld_seq_wgt_gmn_obd}, it follows that
$U_i\cong \CC^{k(n-k)}/G(b_i)$. Note that $U_i$ is an open cone over $L'(b_i; {\bf b}_{(i)})$.
Let $t:= k(n-k)$.
Using \cite[Theorem 2]{Ka} we have
	\begin{align}\label{eq_cohom_lens}
		H^q(L'(b_{i};{\bf{b}}_{(i)});\ZZ)=
		\begin{cases}
			\ZZ &\quad\text{for}~q=0,2t-1 \\
			\ZZ_{\mu} &\quad \mu={l_e{ \overline{\bf{b}}_{(i)}}}/{l_e{{\bf{b}}_{(i)}}} ~\text{if}~ q=2e ~\text{for}~1\leq e\leq t-1 \\    
			\{0\} &\quad\text{ otherwise}.    
		\end{cases}       
	\end{align}
For a prime $p$ let $r_i$ be the maximum non negative integer such that $p^{r_i}$ divides $b_i$ for all $i\in \{0,1,\dots,m\}$. We call $p^{r_i}$ as the $p$-th content in $b_i$. Let $p^{r_1},\dots,p^{r_t}$ be the  $p$-th contents of the elements in ${\bf b}_{(i)}$ such that $r_1\leq \dots,\leq r_t$. Then
\begin{equation}\label{eq_p-torsion}
  l_e{\bf{b}}_{(i)} = \prod_{p:~\text{prime}}\prod_{i=1}^e p^{r_{t-i+1}} ~\text{for}~1\leq e\leq t-1.  
\end{equation}

\begin{lemma}\label{lem_prim}
 Let $k\geq 2$ and ${\bf b}=(b_0,\dots,b_m)\in (\ZZ_{\geq 1})^{m+1}$ be a primitive Pl\"{u}cker weight vector such that $m+1={n \choose k}$. Then $\overline{\bf{b}}_{(i)}$'s are primitive vectors for all $i=0,1,\dots,m$.
\end{lemma}
\begin{proof}
Fix one $i\in \{0,1,\dots,m\}$. If $b_j\notin \overline{\bf b}_{(i)}$ then $|\lambda^j\cap \lambda^i|=k-u\leq k-2$. Without loss of generality assume that $\lambda^i=(\lambda^i_1,\dots,\lambda^i_u,\lambda^i_{u+1},\dots,\lambda^i_k)$ and $\lambda^j=(\lambda^j_1,\dots,\lambda^j_u,\lambda^i_{u+1},\dots,\lambda^i_k)$, where $\lambda^i_1\neq \lambda^j_1,$ $\dots,$ $ \lambda^i_u\neq \lambda^j_u$. Consider Schubert symbols $\lambda^{s_1}=(\lambda^j_1,\lambda^i_2\dots,\lambda^i_u,\lambda^i_{u+1},\dots,\lambda^i_k), $ $\dots, \lambda^{s_u}=(\lambda^i_1,\dots,\lambda^i_{u-1},\lambda^j_u,\lambda^i_{u+1},\dots,\lambda^i_k)$. Then $|\lambda^i\cap \lambda^{s_v}|=k-1$ and $b_{s_v}\in {\bf b}_{(i)}$ for all $v=1,2,\dots,u$. Note that $$\bigcup_{v=1}^u\lambda^{s_v}=\lambda^j\bigcup_{(u-1)\text{ times}}\lambda^i. $$ Since ${\bf b}$ is a Pl\"{u}cker weight vector using remark \ref{rem_plu_wgt_vec}, there exist $b_{s_1},b_{s_2},\dots, b_{s_u}\in {\bf b}_{(i)}$ such that $$b_j+(u-1)b_i=b_{s_1}+b_{s_2}\dots+b_{s_u}.$$ If $d$ divides $b_i$ and every element of ${\bf b}_{(i)}$ then $d$ also divides $b_j$. Hence, $\overline{\bf b}_{(i)}$ is not a primitive vector contradicts the fact that ${\bf b}$ is a primitive Pl\"{u}cker weight vector.  
\end{proof}

We recall the coordinate elements $[e_{\lambda^i}]\in U_i$, for $i=0,1,\dots,m$, where $e_{\lambda^i}$ is defined in Subsection \ref{subsec_pl_wgt_vec}.

\begin{lemma}\label{lemma_int_coh}
 Let $k\geq 2$ and ${\bf b}=(b_0,\dots,b_m)$ be a primitive Pl\"{u}cker weight vector. Then $$H^{2t-1}({\rm{Gr}}_{\bf b}(k,n),{\rm{Gr}}_{\bf b}(k,n)\setminus [e_{\lambda^i}]~;~\ZZ)=\ZZ_{b_i}.$$
\end{lemma}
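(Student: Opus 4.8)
The plan is to reduce the assertion to a local computation around the coordinate element $[e_{\lambda^i}]$ and then to a purely arithmetic identity about the Pl\"{u}cker weights. First I would invoke excision for the pair $(\G_{\bf b}(k,n),\G_{\bf b}(k,n)\setminus[e_{\lambda^i}])$, excising the closed set $\G_{\bf b}(k,n)\setminus U_i$; since $[e_{\lambda^i}]\in U_i$ this is legitimate and gives $H^{2t-1}(\G_{\bf b}(k,n),\G_{\bf b}(k,n)\setminus[e_{\lambda^i}];\ZZ)\cong H^{2t-1}(U_i,U_i\setminus[e_{\lambda^i}];\ZZ)$. As recorded above, $U_i\cong\CC^{t}/G(b_i)$ is an open cone over the lens complex $L'(b_i;{\bf b}_{(i)})$ with apex $[e_{\lambda^i}]$, so $U_i$ is contractible and $U_i\setminus[e_{\lambda^i}]$ deformation retracts onto $L'(b_i;{\bf b}_{(i)})$. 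Feeding this into the long exact sequence of the pair and using $H^{q}(U_i)=0$ for $q\ge1$, the connecting map yields $H^{2t-1}(U_i,U_i\setminus[e_{\lambda^i}];\ZZ)\cong H^{2t-2}(L'(b_i;{\bf b}_{(i)});\ZZ)$. Here $t=k(n-k)\ge 4$ since $k\ge 2$, so the degree shift sits in the expected range.

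Next I would read off $H^{2t-2}=H^{2(t-1)}$ from Kawasaki's formula quoted above: with $e=t-1$ it equals $\ZZ_\mu$, where $\mu=l_{t-1}\overline{\bf b}_{(i)}/l_{t-1}{\bf b}_{(i)}$. Thus it remains to prove the arithmetic identity $\mu=b_i$, which I would check prime by prime using the description of $l_e$ in \eqref{eq_p-torsion}. Fix a prime $p$, let $a_1\le\cdots\le a_t$ be the $p$-adic valuations of the $t$ entries of ${\bf b}_{(i)}$, and let $c=v_p(b_i)$. Since $l_{t-1}$ keeps the top $t-1$ contents, $v_p(l_{t-1}{\bf b}_{(i)})=a_2+\cdots+a_t$, while $v_p(l_{t-1}\overline{\bf b}_{(i)})$ is the sum of the $t-1$ largest among $\{a_1,\dots,a_t,c\}$. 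When $p\nmid b_i$ (so $c=0$), the two smallest of $\{a_1,\dots,a_t,0\}$ are $0$ and $a_1$, giving $v_p(\mu)=0=v_p(b_i)$ with no further input.

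The heart of the matter is the case $p\mid b_i$, and this is where I expect the main difficulty. I would prove the combinatorial claim that \emph{if $p\mid b_i$ then at least two entries of ${\bf b}_{(i)}$ are coprime to $p$}; granting this, $a_1=a_2=0$, the two smallest valuations in $\overline{\bf b}_{(i)}$ are these two zeros, and a direct cancellation gives $v_p(\mu)=(a_3+\cdots+a_t+c)-(a_3+\cdots+a_t)=c=v_p(b_i)$. To establish the claim I would use Proposition \ref{prop_rel_two_wt} to realize ${\bf b}$ from data $(W,a)$, so each adjacent weight has the form $b_{s,r}=b_i+w_r-w_{\lambda^i_s}$ for $\lambda^{j}=(\lambda^i_s,r)\lambda^i\in Ad(\lambda^i)$; modulo $p$ one has $b_{s,r}\equiv 0$ iff $w_r\equiv w_{\lambda^i_s}$. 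By Lemma \ref{lem_prim} the vector $\overline{\bf b}_{(i)}$ is primitive, so at least one pair $(s_0,r_0)$ satisfies $w_{r_0}\not\equiv w_{\lambda^i_{s_0}}$. Assuming there is exactly one such pair, I would derive a contradiction: ranging over the out-indices $r\ne r_0$ (which exist since $n-k\ge k\ge 2$) forces all in-values $w_{\lambda^i_1},\dots,w_{\lambda^i_k}$ to be congruent mod $p$, and then a pair $(s_1,r_0)$ with $s_1\ne s_0$ (which exists since $k\ge 2$) forces $w_{r_0}\equiv w_{\lambda^i_{s_1}}\equiv w_{\lambda^i_{s_0}}$, contradicting the choice of $(s_0,r_0)$.

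The main obstacle is precisely this combinatorial claim: it is exactly the point where both the primitivity hypothesis and the assumption $k\ge 2$ are indispensable. Indeed, for $k=1$ (weighted projective space) a primitive vector such as $(1,p,p)$ has only one weight coprime to $p$ at the heavy vertex, and there $H^{2t-1}$ vanishes rather than being $\ZZ_{b_i}$; the Grassmannian geometry for $k\ge 2$ forces a built-in normalization at every coordinate point, which is what makes $\mu=b_i$ hold. I would keep the reduction steps terse and devote the write-up to this claim together with the prime-by-prime valuation bookkeeping.
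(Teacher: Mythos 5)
Your proposal is correct and follows the paper's proof almost step for step: excision down to $U_i$, the identification of $U_i$ as a cone over $L'(b_i;{\bf b}_{(i)})$, Kawasaki's formula in degree $2(t-1)$, and the prime-by-prime verification that $l_{t-1}\overline{\bf b}_{(i)}/l_{t-1}{\bf b}_{(i)}=b_i$, which in both treatments reduces to showing that when $p\mid b_i$ at least two entries of ${\bf b}_{(i)}$ are prime to $p$. The only point of divergence is how that last claim is proved: the paper gets it directly from the additive relation $b_j+b_{j'}=b_s+b_{s'}$ among adjacent weights (Remark \ref{rem_plu_wgt_vec}) — one entry prime to $p$, guaranteed by Lemma \ref{lem_prim}, forces a second via this relation — whereas you route through the $(W,a)$ realization of Proposition \ref{prop_rel_two_wt} and argue by congruences among the $w$'s; both versions rest on the same primitivity input and on having $k\ge 2$ and $n-k\ge 2$, so this is a cosmetic rather than structural difference.
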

\begin{proof}
We recall $U_i$ from \eqref{eq_open_nbd}. Using excision theorem $$H^{2t-1}(\G_{\bf b}(k,n),\G_{\bf b}(k,n)\setminus[e_{\lambda^i}]~;~\ZZ)\cong H^{2t-1}(U_i,U_i\setminus[e_{\lambda^i}]~;~\ZZ).$$ Now using the fact that $U_i$ is an open cone over $L'(b_i; {\bf b}_{(i)})$ $$H^{2t-1}(U_i,U_i\setminus[e_{\lambda^i}]~;~\ZZ)\cong H^{2t-2}(L(b_i;{\bf{b}}_{(i)});\ZZ).$$
Following the notation of \eqref{eq_cohom_lens}, we need to prove
\begin{equation}\label{eq_cls}
  \frac{l_{t-1}\overline{\bf{b}}_{(i)}}{l_{t-1}{\bf{b}}_{(i)}}=b_i. 
\end{equation}
Since ${\bf b}$ is primitive, we have $\overline{\bf b}_{(i)}$ is also primitive for all $i=0,1,\dots,m$, using Lemma \ref{lem_prim}. For a prime $p$, if $p$ does not divide $b_i$, then the $p$-th content of both sides of the \eqref{eq_cls} is 1. Now let $p$ divides $b_i$. Since $\overline{\bf b}_{(i)}$ is primitive there exists at least one element $b_j$ in ${\bf b}_{(i)}$ such that $p$ does not divide $b_j$.  Note that for every element $b_j$ in ${\bf b}_{(i)}$ there exist $b_{j'}, b_s, b_{s'}\in {\bf b}_{(i)}$ such that $$b_j+b_{j'}=b_s+b_{s'}.$$ Thus there exist at least two integers in ${\bf b}_{(i)}$ such that both integers are not divisible by $p$. Then $p$-th content of $b_i$=$p$-th content of $\frac{l_{t-1}\overline{\bf{b}}_{(i)}}{l_{t-1}{\bf{b}}_{(i)}}$. This is true for all prime $p$. This completes the proof.
\end{proof}

\begin{theorem}\label{thm_cohom_rig}
Let $k\geq 2$. If there exists a homeomorphism $f$ between two weighted Grassmann orbifolds ${\rm{Gr}}_{\bf b}(k,n)$ and ${\rm{Gr}}_{\bf c}(k,n)$ such that  $f$ maps every coordinate element $[e_{\lambda^i}]$ to some coordinate element $[e_{\lambda^j}]$, where $i,j\in\{0,1,\dots,m\}$, then ${\bf b}$ and ${\bf c}$ are same up to a scalar multiplication and a permutation.
\end{theorem}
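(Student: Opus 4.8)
The plan is to recover each weight $b_i$ (up to a common scalar) from purely topological data attached to the coordinate element $[e_{\lambda^i}]$, and then to show that the homeomorphism $f$ transports this data in a way that forces the correspondence on the weight vectors. The key observation is that the preceding lemma identifies the weight $b_i$ as a local cohomological invariant of the orbifold at the coordinate point: for a primitive Pl\"{u}cker weight vector we have the isomorphism
\[
H^{2t-1}\bigl({\rm Gr}_{\bf b}(k,n),\,{\rm Gr}_{\bf b}(k,n)\setminus[e_{\lambda^i}]~;~\ZZ\bigr)\cong \ZZ_{b_i},
\]
where $t=k(n-k)$. Since $f$ is a homeomorphism carrying $[e_{\lambda^i}]$ to some coordinate element $[e_{\lambda^{\tau(i)}}]$ of ${\rm Gr}_{\bf c}(k,n)$, it induces an isomorphism of these local relative cohomology groups. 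First I would reduce to the primitive case: by the scaling proposition $\G_{\bf b}(k,n)\cong\G_{r{\bf b}}(k,n)$, I may divide ${\bf b}$ and ${\bf c}$ by the respective gcds and assume both are primitive, since a scalar multiple is exactly what the statement permits. The induced isomorphism $\ZZ_{b_i}\cong\ZZ_{c_{\tau(i)}}$ then gives $b_i=c_{\tau(i)}$ for every $i$, so the multisets of weights coincide and $\tau$ is the required permutation at the level of individual entries.

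The subtlety is that matching the weights entrywise via a set map $\tau$ is not yet the same as $\tau$ being realized by a genuine permutation of Schubert symbols compatible with the orbifold structure; I must check that $\tau$ is a well-defined bijection on $\{0,1,\dots,m\}$ and that $c_{\tau(i)}=b_i$ holds simultaneously for all $i$, which the cardinality count ($f$ is a bijection sending the finite set of coordinate elements onto the coordinate elements) supplies. Concretely, $f$ permutes the $m+1$ coordinate elements because they are intrinsically characterized---for instance as the points whose local orbifold group is encoded by the above torsion invariant, or more simply as the distinguished fixed data---so $\tau$ is automatically a permutation. Once $\tau$ is a permutation with $b_i=c_{\tau(i)}$ for all $i$, writing $\sigma=\tau^{-1}$ we obtain ${\bf c}=\sigma{\bf b}$ in the notation of the earlier section, i.e.\ ${\bf c}$ is a permutation of ${\bf b}$, and together with the primitive normalization this yields that ${\bf b}$ and ${\bf c}$ agree up to a scalar and a permutation.

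I anticipate the main obstacle to be the passage from the raw entrywise equality $b_i=c_{\tau(i)}$ to the genuinely combinatorial statement of the theorem. The local invariant only sees the single integer $b_i$ at each point and is blind to the relative positions of the $\lambda^i$ in the lattice of Schubert symbols; so a priori $\tau$ could scramble the weights in a way inconsistent with any Pl\"{u}cker permutation. However, the theorem as stated asks only that ${\bf b}$ and ${\bf c}$ be the same \emph{up to a scalar multiplication and a permutation}, which is precisely the multiset statement, so I do not in fact need $\tau$ to respect the Pl\"{u}cker structure---I only need it to be a bijection matching the weights, and that follows from $f$ being a homeomorphism permuting the coordinate elements. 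Thus the real content is the local cohomology computation of the previous lemma, and the remaining argument is the bookkeeping that assembles those pointwise identifications, after reducing to primitive vectors via the scaling proposition, into the claimed equality of weight vectors up to scalar and permutation.
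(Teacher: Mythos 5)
Your proposal is correct and follows essentially the same route as the paper: reduce to primitive weight vectors via the scaling homeomorphism, apply the lemma identifying $H^{2t-1}\bigl({\rm Gr}_{\bf b}(k,n),{\rm Gr}_{\bf b}(k,n)\setminus[e_{\lambda^i}];\ZZ\bigr)\cong\ZZ_{b_i}$ as a local invariant at each coordinate element, and conclude $b_i=c_{\tau(i)}$ for a permutation $\tau$ induced by $f$ on the finite set of coordinate elements. Your additional remark that the theorem only asserts a multiset-level (not Pl\"{u}cker-structure-preserving) identification is accurate and consistent with the paper's statement.
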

\begin{proof}
  Without loss of generality, we can assume that both the Pl\"{u}cker weight vectors ${\bf b}$ and ${\bf c}$ are primitive. Let $f$ be the homeomorphism between $\G_{\bf b}(k,n)$ and $\G_{\bf c}(k,n)$ such that for all $i\in \{0,1,\dots,m\}$, $f([e_{\lambda^i}])=[e_{\lambda^j}]$ for some $j\in \{0,1,\dots,m\}$. Then it induces a isomorphism between $H^{2t-1}(\G_{\bf b}(k,n), \G_{\bf b}(k,n)\setminus[e_{\lambda^i}];\ZZ)$ and $H^{2t-1}(\G_{\bf c}(k,n), \G_{\bf c}(k,n)\setminus[e_{\lambda^j}];\ZZ)$. This implies $\ZZ_{b_i}$ is isomorphic to $\ZZ_{c_j}$. Thus $b_i=c_j$. Define a permutation $\sigma$ on $\{0,1,\dots,m\}$ by $\sigma(i)=j$. Then ${\bf b}=\sigma {\bf c}$ for some permutation $\sigma$ on $\{0,1,\dots,m\}$.      
\end{proof}

We recall the $(\CC^*)^n$-action on $\G_{\bf b}(k,n)$ from \eqref{eq_c_act} and note that the fixed point set of $\G_{\bf b}(k,n)$ with respect to the $(\CC^*)^n$-action is $\{[e_{\lambda^i}]\}_{i=0}^m$.

\begin{corollary}\label{cor_cls_eq_hom}
Let $k\geq 2$ and $G$ be a group acts on a weighted Grassmann orbifold $\G_{\bf b}(k,n)$ with the fixed point set $\{[e_{\lambda^i}]\}_{i=0}^m$. If ${\rm{Gr}}_{\bf b}(k,n)$ is weakly $G$-equivariantly homeomorphic to ${\rm{Gr}}_{\bf c}(k,n)$, then $\bf b$ and $\bf c$ are the same upto a scalar multiplication and a permutation.
\end{corollary}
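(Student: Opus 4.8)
The plan is to reduce Corollary~\ref{cor_cls_eq_hom} directly to Theorem~\ref{thm_cohom_rig}, using the hypothesis on fixed-point sets to supply the key property that the homeomorphism carries coordinate elements to coordinate elements. First I would unpack the assumption ``weakly $G$-equivariantly homeomorphic.'' By definition this means there is a homeomorphism $f\colon \G_{\bf b}(k,n)\to \G_{\bf c}(k,n)$ together with a group automorphism $\psi\colon G\to G$ such that $f(g\cdot x)=\psi(g)\cdot f(x)$ for all $g\in G$ and all $x\in \G_{\bf b}(k,n)$. The single structural fact I need from this is that a weakly equivariant homeomorphism must send the $G$-fixed-point set of the source homeomorphically onto the $G$-fixed-point set of the target.

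Second, I would verify that claim. If $x$ is fixed by all of $G$, then for every $g\in G$ we have $g\cdot f(x)=\psi(g^{-1})^{-1}\cdot f(x)$; more directly, writing $h=\psi(g)$ and using surjectivity of $\psi$, the relation $f(g\cdot x)=\psi(g)\cdot f(x)$ gives $h\cdot f(x)=f(g\cdot x)=f(x)$ for every $h\in G$, so $f(x)$ is $G$-fixed. Applying the same argument to $f^{-1}$ (which is weakly equivariant via $\psi^{-1}$) shows $f$ restricts to a bijection between the two fixed-point sets. By hypothesis the fixed-point set of $\G_{\bf b}(k,n)$ is exactly $\{[e_{\lambda^i}]\}_{i=0}^m$, and by the same token the fixed-point set of $\G_{\bf c}(k,n)$ is $\{[e_{\lambda^i}]\}_{i=0}^m$ as well (the $G$-action on the target has the stated fixed-point set because $f$ transports the source action to it, or equally by the standing description of the coordinate elements as the fixed points). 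Hence $f$ maps each coordinate element $[e_{\lambda^i}]$ to some coordinate element $[e_{\lambda^j}]$.

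Third, with this established, $f$ satisfies precisely the hypothesis of Theorem~\ref{thm_cohom_rig}: it is a homeomorphism between $\G_{\bf b}(k,n)$ and $\G_{\bf c}(k,n)$ sending every coordinate element to a coordinate element. Invoking that theorem immediately yields that ${\bf b}$ and ${\bf c}$ agree up to scalar multiplication and a permutation, which is the desired conclusion.

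The only genuinely delicate point is the bookkeeping in the second step: one must be careful that ``weakly equivariant'' is interpreted with an automorphism $\psi$ of $G$ (rather than requiring strict equivariance), and confirm that surjectivity of $\psi$ is what forces $f(x)$ to be fixed by \emph{all} of $G$ and not merely by the image of a single element. Once the fixed-point sets are matched, nothing further is needed beyond citing Theorem~\ref{thm_cohom_rig}, so I expect the main obstacle to be entirely in correctly formalizing the fixed-point transport; the rest is a formal reduction.
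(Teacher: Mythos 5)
Your proposal is correct and follows essentially the same route as the paper: the paper's proof likewise observes that a weakly equivariant homeomorphism carries fixed points to fixed points, identifies the fixed-point set with the coordinate elements, and then invokes Theorem~\ref{thm_cohom_rig}. Your extra care in unpacking the automorphism $\psi$ and transporting the fixed-point set is a more detailed version of the same one-line reduction.
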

\begin{proof}
    A weakly equivariant homeomorphism maps a fixed point to a fixed point. The coordinate elements $\{[e_{\lambda^i}]\}_{i=0}^m$ are the fixed point set in $\G_{\bf b}(k,n)$ with respect to the action of $G$. Hence, the proof follows from the Theorem \ref{thm_cohom_rig}.
\end{proof}

In the view of Lemma \ref{lem_nor_vec}, Lemma \ref{lem_prim} and Lemma \ref{lemma_int_coh} we propose the following conjecture.
\begin{conj}
Let $k\geq 2$. If two weighted Grassmann orbifolds ${\rm{Gr}}_{\bf b}(k,n)$ and ${\rm{Gr}}_{\bf c}(k,n)$ are homeomorphic then then ${\bf b}$ and ${\bf c}$ are same up to a scalar multiplication and a  Pl\"{u}cker permutation.
\end{conj}

\section{Torsion's in the integral cohomology of weighted Grassmann orbifold}\label{sec_tor_int_coh_wgt_gsm} 
In this section, we prove that for a prime $p$ the integral cohomologies of weighted Grassmann orbifolds have no $p$-torsion under some assumptions.  
We discuss some non-trivial classes of weighted Grassmann orbifolds whose integral cohomologies have no torsions. In particular, the integral cohomology of a divisive weighted Grassmann orbifold is torsion-free and concentrated in even degrees. Recall $I(\lambda^i)$ from \eqref{eq_R_lambda}.

\begin{theorem}\label{thm_no_p_tor}
	Let $p$ be a prime number. If for every $j\leq m-3$, the $p$-th content of $b_{j}$ divides $d'(\lambda^j)-1$ many elements of the set $\{ b_{\ell} : \lambda^\ell \in I(\lambda^j)\}$ then the integral cohomology of ${\rm{Gr}}_{\bf b}(k,n)$ has no $p$-torsion.  
\end{theorem}
\begin{proof}
Recall the building sequence $$ X^0\subset X^1\subset\dots\subset X^m=\G_{\bf b}(k,n),$$ induced from the $q$-CW complex structure as in \eqref{filtration}. We prove that the integral cohomology of $X^i$ has no torsion for all $0\leq i\leq m$ by using induction on $i$. Note that $X^0=\{pt\}, X_1=\WW P(b_{m-1},b_m)$ and $X^2=\WW P(b_{m-2},b_{m-1},b_m)$ are weighted projective spaces. Thus, the integral cohomology of $X^i$ has no torsion for all $0\leq i\leq 2$ by \cite[Theorem 1]{Ka}. Let us assume $H^*(X^{i-1};\ZZ)$ has no $p$-torsion. We recall the following cofibration from Proposition \ref{prop_cofibre}
$$L'(b_{i} ; {\bf{b}}^{(i)}) \hookrightarrow X^{i-1} \to X^i,$$
where ${\bf b}^{(i)}$ is defined prior to Proposition \ref{prop_cofibre}. This will induce the following exact sequence in cohomology
{\begin{equation}\label{eq_ex_seq_hom_wgt_gsm}
		\to \widetilde{H}^{q-1}(L'(b_{i} ; {\bf{b}}^{(i)});\ZZ)\to H^q( X^i;\ZZ) \to  H^q(X^{i-1};\ZZ) \to \widetilde{H}^{q}(L'(b_{i} ; {\bf{b}}^{(i)});\ZZ)\to.
	\end{equation}}
Using \cite[Theorem 2]{Ka} we have
\begin{align*}
H^q(L'(b_{i}; {\bf{b}}^{(i)});\ZZ)=
\begin{cases}
\ZZ &\quad\text{for}~q=0,2 d'(\lambda^i)-1  \\
\ZZ_{\mu} &\quad \mu={l_e{\overline{\bf{b}}^{(i)}}}/{l_e{{\bf{b}}^{(i)}}} ~\text{if}~ q=2e ~\text{for}~1\leq e\leq d'(\lambda^i)-1 \\    \{0\} &\quad\text{ otherwise},    
\end{cases}       
\end{align*}
 where ${\overline{\bf{b}}^{(i)}}:={\bf{b}}^{(i)}\cup \{b_i\}.$ 
For any prime $p$ the $p$-th content in $l_e{{\bf{b}}^{(i)}}$ is the product of $e$ many maximum $p$-th contents of the elements in set $\{b_{\ell}\colon \lambda^\ell\in I(\lambda^i)\}$ as in \eqref{eq_p-torsion}. Thus the $p$-th content of $\mu$ is 1, if the $p$-th content of $b_{i}$ divides $e$ many element of the set $ \{b_{\ell}:\lambda^\ell \in I(\lambda^i)\}$. Now the maximum value of $e$ can be $d'(\lambda^i)-1$. Thus by our assumption $H^*(L'(b_{i};{\bf{b}}^{(i)});\ZZ)$ has no $p$-torsion. Thus $H^*( X^i,\ZZ)$ has no $p$-torsion using \eqref{eq_ex_seq_hom_wgt_gsm} and induction hypothesis. Hence, by induction the integral cohomology of $X^m=\G_{\bf b}(k,n)$ has no $p$-torsion.
\end{proof}

\begin{corollary}
For a prime $p$, if there exists a Pl\"{u}cker permutation $\sigma$  on $\{0,\dots,m\}$ such that for all $j\leq m-3$, the $p$-th content of $b_{\sigma (j)}$ divides $d'(\lambda^j)-1$ many elements of the set $\{ b_{\sigma(\ell)} : \lambda^\ell \in I(\lambda^j)\}$ then the integral cohomology of ${\rm{Gr}}_{\bf b}(k,n)$ has no $p$-torsion.  
\end{corollary}

\begin{proof}
Let $\sigma$ be a  a Pl\"{u}cker permutation. Consider the building sequence of $\G_{\bf b}(k,n)$ induced from the $q$-CW complex structure following Remark \ref{rem_per_cell}. Now the proof followes using the same argument as in the Proof of Theeorem \ref{thm_no_p_tor}.
\end{proof}

\begin{remark}\label{rmk_no_p_tor}
If Theorem \ref{thm_no_p_tor} is true for every prime $p$ then $H^*(\G_{\bf{b}}(k,n),\ZZ)$ is torsion-free and contrated in even degrees.
\end{remark}

\begin{example}\label{no_tor_cohom}
Let $n=4,k=2$ and $m={4\choose 2}-1=5$. In this case, we have $6$ Schubert symbols given by
$$\lambda^0=(1,2)<\lambda^1=(1,3)<\lambda^2=(1,4)<\lambda^3=(2,3)<\lambda^4=(2,4)<\lambda^5=(3,4).$$ 
\begin{enumerate}
    \item Let $\alpha, \beta,\gamma$ be three positive integers. Consider a {\Pl} vector
  $${\bf{b}}=( \gamma,\gamma,\alpha \gamma, (\alpha\beta-\alpha+1)\gamma,\alpha\beta\gamma,\alpha\beta\gamma)\in (\ZZ_{\geq 1})^6.$$ Then $b_i$ divides $b_j$ for all $\lambda^j\succeq \lambda^i$ for $i\leq 2$. Thus the integral cohomology of the $\G_{\bf{b}}(2,4)$ has no torsion using Theorem \ref{thm_no_p_tor}.
  
  \item Let $\alpha,\beta,\gamma,s$ and $t$ be positive integers. Consider a weight vector of the form 
$${\bf{b}}=(\alpha,t\alpha,st\alpha,(1+s\beta)t\alpha,(1+\beta)st\alpha,\gamma)\in (\ZZ_{\geq 1})^6.$$
In addition, if $\alpha+\gamma=t\alpha(1+s+s\beta)$ then  ${\bf{b}}$ is a Pl\"{u}cker weight vector. 
The integral cohomology of the $\G_{\bf{b}}(2,4)$ has no torsion using Theorem \ref{thm_no_p_tor}. 
\end{enumerate}
\end{example}

\begin{example}
Let $n=5,k=2$ and $m={5\choose 2}-1=9$. In this case, we have $10$ Schubert symbols given by
\begin{align*}
   &\lambda^0=(1,2)<\lambda^1=(1,3)<\lambda^2=(1,4)<\lambda^3=(1,5)<\lambda^4=(2,3)<\\
   &\lambda^5=(2,4)<\lambda^6
   =(2,5)<\lambda^7=(3,4)<\lambda^8=(3,5)<\lambda^9=(4,5). 
\end{align*}
\begin{enumerate}
    \item Let $\alpha, \beta,\gamma$ be three positive integers. Consider  $${\bf{b}}=(\gamma,\alpha\gamma,\alpha\gamma,\alpha\beta\gamma,\gamma,\gamma, (\alpha\beta-\alpha+1)\gamma,  \alpha\gamma, \alpha\beta\gamma,\alpha\beta\gamma)\in (\ZZ_{\geq 1})^{10}.$$
Then the integral cohomology of the $\G_{\bf{b}}(2,5)$ has no torsion using Theorem \ref{thm_no_p_tor}. 

 \item Let $\alpha, \beta,\gamma,s$ ant $t$ be positive integers with $\alpha+\gamma=t\alpha(1+s+s\beta)$. Consider
$${\bf{b}}=(\alpha,\alpha,t\alpha,(1+\beta)st\alpha,\alpha,t\alpha,(1+\beta)st\alpha,t\alpha,(1+\beta)st\alpha,\gamma)\in (\ZZ_{\geq 1})^{10}.$$
Then the integral cohomology of the $\G_{\bf{b}}(2,5)$ has no torsion using Theorem \ref{thm_no_p_tor}.
\end{enumerate}
\end{example}

Next, we define divisive weighted Grassmann orbifold in terms of Pl\"{u}cker weight vector and Pl\"{u}cker permutation following \cite{BS}.

\begin{definition}
A weighted Grassmann orbifold $\G_{\bf b}(k,n)$ is called divisive if there exists a Pl\"{u}cker permutation $\sigma$ on $\{0,1,\dots,m\}$ such that $b_{\sigma(i)}$ divides $b_{\sigma(i-1)}$ for every $i=1,2,\dots,m$, where $m+1={n \choose k}$.
\end{definition}

\begin{example}
    Let $\alpha$ and $\beta$ be two positive integers. Consider a weight vector ${\bf b}=( \alpha,\alpha\beta, \alpha\beta,\alpha,\alpha, \alpha\beta)$. Then ${\bf b}$ is a {\Pl} weighte vector for $k=2,n=4$ following Example \ref{ex_pl_wgt_vec}. Define a Pl\"{u}cker permutation $\sigma$ on $\{0,1,2,3,4,5\}$ by $\sigma(0)=5$, $\sigma(5)=0$ and $\sigma(i)=i$ for $i=1,2,3,4$. Then $\G_{\bf b}(2,4)$ is a divisive weighted Grassmann orbifold.
\end{example}
\begin{remark}
Consider a {\Pl} weight vector ${\bf b}=(1,12,12,1,1,12)$ for $k=2, n=4$ corresponding to $W=(5,-6,5,5)$ and $a=2$. Consider the scaled {\Pl} weight vector $7{\bf b}=(7,84,84,7,7,84)$ which corresponds to $W=(41,-36,41,41)$ and $a=2$. Now $\G_{\bf b}(2,4)$ isomorphic to $\G_{7\bf b}(2,4)$ by Lemma \ref{lemma_int_red}. Using Lemma \ref{thm_per_plu_vec}, $\G_{7\bf b}(2,4)$ isomorphic to $\G_{\bf b'}(2,4)$ where ${\bf b'}=(84,84,84,7,7,7)$ that correspond to $W=(80,3,3,3)$ and $a=1$. All the weighted Grassmann orbifolds appearing above are divisive.
\end{remark}
The proof of the following theorem follows from same argument as in \cite[Theorem 3.19]{BS}.
\begin{theorem}\label{thm_div_Gsm}
A divisive weighted Grassmann orbifold has a CW complex structure with even dimensional cells $\{\CC^{d'(\lambda^i)}\}_{i=0}^m$.
\end{theorem}
\begin{proof}
    From \eqref{def_e_lam}, we see that if
$[z]=[(z_0,\dots,z_m)]\in E(\lambda^i),$
then $z_{ j}=0$ for every $j<i$.
The $G(b_i)$-action on $E(\lambda^i )$, described in \eqref{eq_t_act} becomes trivial whenever $b_{i}$  divides $b_j$ for all $\lambda^j\in I(\lambda^i)$. Let $\G_{\bf b}(k,n)$ is a divisive weighted Grassmann orbifold. Consider the $q$-CW complex structure on $\G_{\sigma\bf b}(k,n)$ given by $\{\frac{E(\lambda^i)}{G(b_{\sigma (i )})}\}_{i=0}^m$ following Remark \ref{rem_per_cell}. Since $\G_{\bf b}(d,n)$ is divisive, there exists a {\Pl} permutation $\sigma$  such that $b_{\sigma(i)}$ divides $b_{\sigma(j)}$ for every $\lambda^j\in I(\lambda^i)$. Thus the action of $G(b_{\sigma (i )})$ on $E(\lambda^i )$ is trivial. Consequently, $$\frac{E(\lambda^i)}{G(b_{\sigma (i )})}\cong E(\lambda^i)\cong \CC^{d'(\lambda^i)},$$
where $\CC^{d'(\lambda^i)}$ is defined in \eqref{eq_c_d_i}.
\end{proof}

\begin{corollary}\label{cor}
Integral cohomologies of divisive weighted Grassmann orbifolds are torsion-free and concentrated in even degrees.
\end{corollary}

\section{Structure constants for the equivariant cohomology rings of divisive weighted Grassmann orbifolds with integer coefficients}\label{sec_st_con_div_gsm_orb}

In this section, we explore the $T^n$-equivariant cohomology ring of divisive weighted Grassmann orbifolds with integer coefficients. Our main focus lies on studying the multiplication formulae in the integral equivariant cohomology ring of divisive weighted Grassmann orbifolds. We consider the equivariant Schubert basis of the aforementioned ring as a $\ZZ[y_1,\dots,y_n]$ algebra and compute all the structure constants concerning that basis. Moreover, we demonstrate the positivity of the equivariant structure constants.

Let $T^n=(S^1)^n$ be the $n$-dimensional compact abelian torus. Consider the $T^n$-action on $\G_{\bf{b}}(k,n)$ as the restriction of the $(\CC^*)^n$-action defined in \eqref{eq_c_act}. Let $\G_{\bf b}(k,n)$ be a divisive weighted Grassmann orbifold. We can assume that $b_i$ divides $b_{i-1}$ for all $i=1,2,\dots,m$ using Lemma \ref{thm_per_plu_vec}. Define $d_{ij}:=\frac{b_i}{b_j}$ for $i<j$. We recall $I(\lambda^i)$ from \eqref{eq_R_lambda} and define
\begin{equation*}
R(\lambda^i):=\{\lambda^j:\lambda^i \in I(\lambda^j)\}.
\end{equation*}
Then $|R(\lambda^i)|=d(\lambda^i)$. For every Schubert symbol $\lambda=(\lambda_1, \ldots, \lambda_k)$ we define $Y_{\lambda}:=\sum_{i=1}^ky_{\lambda_i}\in \mathbb{Z}[y_1,\dots, y_n]=H^{*}_{T^n}(\{pt\};\ZZ)$.
\begin{theorem}\cite[Theorem 4.7]{BS}\label{thm_int_eq_coh_div_gsm}
 The $T^n$-equivariant cohomology ring of a divisive weighted Grassmann orbifold ${\rm{Gr}}_{\bf{b}}(k,n)$ with integer coefficients is given by
\begin{align*}
H_{T^n}^{*}({\rm{Gr}}_{\bf{b}} (k,n);\mathbb{Z}) = \Big \{\alpha\in \bigoplus_{j=0}^{m}{\mathbb{Z}}[y_1,\dots, y_n]~\big{|}~(Y_{\lambda^i}-d_{ij}Y_{\lambda^j})~\mbox{divides}~(\alpha_j-\alpha_i) \text{ if } \lambda^i\in R(\lambda^j)\Big \}.  
\end{align*}
\end{theorem}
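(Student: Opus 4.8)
The plan is to establish this as a GKM-type (Goresky--Kottwitz--MacPherson) presentation, exploiting the even-dimensional CW structure from Theorem \ref{thm_div_Gsm}. First I would record that, by Theorem \ref{thm_div_Gsm} and its corollary, the ordinary integral cohomology of a divisive $\G_{\bf b}(k,n)$ is free over $\ZZ$ and concentrated in even degrees. Feeding this into the Serre spectral sequence of the Borel fibration $\G_{\bf b}(k,n) \hookrightarrow ET^n\times_{T^n}\G_{\bf b}(k,n) \to BT^n$ forces collapse at the second page, so $H_{T^n}^*(\G_{\bf b}(k,n);\ZZ)$ is a free $\ZZ[y_1,\dots,y_n]$-module and the space is equivariantly formal. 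Equivariant formality over $\ZZ$ with torsion-free fixed-point data makes the restriction map to the fixed locus injective; since the fixed set for the $(\CC^*)^n$-action of \eqref{eq_c_act} is exactly the coordinate elements $\{[e_{\lambda^i}]\}_{i=0}^m$, this identifies $H_{T^n}^*(\G_{\bf b}(k,n);\ZZ)$ with a $\ZZ[y_1,\dots,y_n]$-submodule of $\bigoplus_{j=0}^m \ZZ[y_1,\dots,y_n]$.

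Next I would pin down the edge conditions. A one-dimensional orbit joins $[e_{\lambda^i}]$ to $[e_{\lambda^j}]$ precisely when $|\lambda^i \cap \lambda^j| = k-1$, equivalently $\lambda^i \in R(\lambda^j)$ (recall \eqref{eq_R_lambda}). The closure of such an orbit is an orbifold $2$-sphere, a weighted projective line, whose isotropy at the two ends has orders recorded by $b_i$ and $b_j$. Computing the local tangent representation, the relevant edge weight is not the unweighted difference of the $T^n$-weights $Y_{\lambda^i}-Y_{\lambda^j}$ but the scaled combination $Y_{\lambda^i} - d_{ij}Y_{\lambda^j}$, with $d_{ij} = b_i/b_j$ encoding the relative isotropy; this is exactly where divisiveness ($b_j \mid b_i$ for $i<j$) is used to keep $d_{ij}$ integral and the edge weight linear. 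Restricting a class to each orbifold sphere and invoking the equivariant cohomology of a weighted $\CP^1$ then yields the necessary condition that $(Y_{\lambda^i} - d_{ij}Y_{\lambda^j})$ divides $(\alpha_j - \alpha_i)$ whenever $\lambda^i \in R(\lambda^j)$, proving the inclusion ``$\subseteq$''.

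For the reverse inclusion I would argue by induction along the building sequence \eqref{filtration}, $\{pt\} = X^0 \subset \cdots \subset X^m = \G_{\bf b}(k,n)$. The cofibration $L'(b_i;{\bf b}^{(i)}) \hookrightarrow X^{i-1}\to X^i$ of Proposition \ref{prop_cofibre}, together with even-dimensionality, splits the long exact sequence of the pair $(X^i, X^{i-1})$ into short exact sequences of free $\ZZ[y_1,\dots,y_n]$-modules
\begin{equation*}
0 \to H_{T^n}^*(X^i, X^{i-1};\ZZ) \to H_{T^n}^*(X^i;\ZZ) \to H_{T^n}^*(X^{i-1};\ZZ) \to 0 .
\end{equation*}
The relative term is a single free module generated by an equivariant Thom class of the orbifold cell attached at $[e_{\lambda^i}]$, whose image in the fixed-point ring is the product of edge weights $\prod_{\lambda^\ell\in R(\lambda^i)}(Y_{\lambda^\ell} - d_{\ell i}Y_{\lambda^i})$. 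Assuming inductively that every tuple satisfying the divisibility conditions on the indices $\{0,\dots,i-1\}$ is realized on $X^{i-1}$, the extension problem over $X^i$ reduces to finding a polynomial $\alpha_i$ matching the prescribed congruences at $[e_{\lambda^i}]$ relative to its neighbours in $R(\lambda^i)$, and solvability is governed precisely by the stated divisibility.

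The main obstacle is this final surjectivity step: one must verify that the GKM congruences at the newly added vertex are simultaneously solvable and that the Thom/Euler class factors exactly as the product of edge weights, including the correct $d_{\ell i}$ scalings. This rests on computing the orbifold normal representation at each coordinate element and on the lens-complex cohomology of \cite[Theorem 2]{Ka} (via Definition \ref{def_twist_act}) to control the relative groups $H_{T^n}^*(X^i,X^{i-1};\ZZ)$; the divisibility hypothesis defining divisiveness is what guarantees these relative groups are free with the expected Thom-class behaviour, so the short exact sequences split as claimed.
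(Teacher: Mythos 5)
The paper does not prove this statement at all --- it is imported verbatim as \cite[Theorem 4.7]{BS} --- and your outline (even-cell CW structure, collapse of the Borel spectral sequence, injection into the fixed set $\{[e_{\lambda^i}]\}_{i=0}^m$, edge conditions from the weighted projective lines, and induction along the building sequence \eqref{filtration} using the cofibrations of Proposition \ref{prop_cofibre}) is essentially the strategy of that reference, itself modeled on the Harada--Holm--Ray--Williams treatment of divisive weighted projective spaces. Two small points to tighten: over $\ZZ$, injectivity of restriction to the fixed set is not a formal consequence of equivariant formality and should instead be extracted from your own induction, using that the Euler class $\prod_{\lambda^\ell\in R(\lambda^i)}(Y_{\lambda^\ell}-d_{\ell i}Y_{\lambda^i})$ is a nonzerodivisor in $\bigoplus_j\ZZ[y_1,\dots,y_n]$; and $\lambda^i\in R(\lambda^j)$ is not literally equivalent to $|\lambda^i\cap\lambda^j|=k-1$ (the latter set is $Ad(\lambda^j)$, of size $k(n-k)$, while $|R(\lambda^j)|=d(\lambda^j)$) --- each edge is recorded exactly once after orienting it so that the smaller Schubert symbol lies in $R$ of the larger.
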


Next, we give a combinatorial description of the equivariant Schubert classes ${\bf b}\widetilde{\mathbb{S}}_{\lambda^i}\in H_{T^n}^{*}({\rm{Gr}}_{\bf{b}} (k,n);\mathbb{Z})$ corresponding to every Schubert symbol $\lambda^i$.

\begin{definition}\cite{BS}\label{prop_eq_Sc_cls}
An element ${\bf{b}}\widetilde{\mathbb{S}}_{\lambda^i}\in H_{T^n}^{*}(\G_{\bf{b}} (k,n);\mathbb{Z})$ is called an equivariant Schubert class corresponding to the Schubert symbol $\lambda^i$ if the following conditions are satisfied.
\begin{enumerate}
	\item  ${\bf{b}}\widetilde{\mathbb{S}}_{\lambda^i}|_{\lambda^j}\neq 0  \implies \lambda^j \succeq \lambda^i$. 
	\item  ${\bf{b}}\widetilde{\mathbb{S}}_{\lambda^i}|_{\lambda^i}=\prod_{\lambda^\ell\in R(\lambda^i)}(Y_{\lambda^\ell}-\dfrac{b_\ell}{b_i}Y_{\lambda^i})$.
	\item ${\bf{b}}\widetilde{\mathbb{S}}_{\lambda^i}|_{\lambda^j}$ is a homogeneous polynomial of $y_1,y_2,\dots,y_n$ of degree $d({\lambda^i})$.
\end{enumerate}
\end{definition}
The next Proposition follows from \cite[Proposition 5.3]{BS} as well as \cite[Proposition 4.3]{HHH}.
\begin{proposition}
    There exists a unique Schubert class ${\bf{b}}\widetilde{\mathbb{S}}_{\lambda^i}\in H_{T^n}^{*}(\G_{\bf{b}} (k,n);\mathbb{Z})$ corresponding to every Schubert symbol $\lambda^i$. Moreover $\{{\bf{b}}\widetilde{\mathbb{S}}_{\lambda^i}\}_{i=0}^m$ forms a basis for the equivariant cohomology ring $ H_{T^n}^{*}(\G_{\bf{b}} (k,n);\mathbb{Z})$ as a $\ZZ[y_1,\dots,y_n]$ algebra.
\end{proposition}

Subsequently, we have the following product formulae in the equivariant cohomology ring of divisive weighted Grassmann orbifold with integer coefficients.     
\begin{equation}\label{eq_wgt_mul}
{\bf{b}}\widetilde{\mathbb{S}}_{\lambda^i}{\bf{b}}\widetilde{\mathbb{S}}_{\lambda^j}=\sum_{\ell=0}^m {\bf{b}}\widetilde{\mathscr{C}}_{i~j}^{\ell}{\bf{b}}\widetilde{\mathbb{S}}_{\lambda^\ell}
\end{equation}

The author and Sarkar have studied some properties of ${\bf{b}}\widetilde{\mathscr{C}}_{i~j}^{\ell}$ in \cite[Lemma 5.5]{BS}. If ${\bf b}=(1,1,\dots,1)$, we denote the basis $\{{\bf{b}}\widetilde{\mathbb{S}}_{\lambda^i}\}_{i=0}^m$ by  $\{\widetilde{\mathbb{S}}_{\lambda^i}\}_{i=0}^m$, which forms a basis for the equivariant cohomology ring $H^*_{T^n}(\G(k,n);\ZZ)$ as a $\ZZ[y_1,\dots,y_n]$ algebra. There exists $\widetilde{\mathscr{C}}_{i~j}^\ell\in \ZZ[y_1,\dots,y_n]$ such that
\begin{equation}\label{eq_mul}
\widetilde{\mathbb{S}}_{\lambda^i}\widetilde{\mathbb{S}}_{\lambda^j}=\sum_{\ell=0}^m\widetilde{\mathscr{C}}_{i~j}^\ell\widetilde{\mathbb{S}}_{\lambda^\ell}
\end{equation} 

 Knutson-Tao found a nice combinatorial formulae of the equivariant structure constants $\widetilde{\mathscr{C}}_{i~j}^\ell$ in terms of puzzles in \cite[Section 2]{KnTa}. Corresponding to every Schubert symbol $\lambda=(\lambda_1,\dots,\lambda_k)$, one can consider a string consisting of $k$ ones and $(n-k)$ zeroes in the following order. One for each $(n+1-\lambda_i)$-th position and zero for the remaining $(n-k)$ positions. In this way, we can identify the set of all Schubert symbols $\lambda=(\lambda_1,\dots,\lambda_k)$ and the set of strings consisting of $k$ ones and $(n-k)$ zeroes in arbitrary order, as explored by Knutson and Tao, see \cite{KnTa}.

Moreover, from \cite[Section 1.2]{KnTa} it follows that the equivariant structure constants $\widetilde{\mathscr{C}}_{i~j}^{\ell}$ defined in \eqref{eq_mul} can be expressed as a polynomial in $\{(y_r-y_{r-1}):1\leq r\leq n-1\}$ such that coefficients are non-negative integers. 
Let $R$ be a finite collection of elements in $\{1,2,\dots,n-1\}$. Define $u_r:=y_r-y_{r+1}$ for $r\in \{1,2,\dots,n-1\}$ and $U_R=\prod_{r\in R}u_r$. Then we can rewrite the above equivariant structure coefficient by the following.
\begin{equation}\label{thm_eq_st_con}
\widetilde{\mathscr{C}}_{i~j}^{\ell}=\sum_{|R|=d(\lambda^i)+d(\lambda^j)-d(\lambda^\ell)}K(i,j,\ell,R)U_R
\end{equation}
where $K(i,j,\ell,R)\in \ZZ_{\geq0}$ is the coefficient of $U_R$ in $\widetilde{\mathscr{C}}_{i~j}^{\ell}$. Thus  $K(i,j,\ell,R)=0$ if there is no monomial in the expansion of  $\widetilde{\mathscr{C}}_{i~j}^{\ell}$ which is constant multiple of $U_R$.

\begin{remark}
\begin{enumerate}
\item The elements in $R$ need not be always distinct; any elements may appear finitely many times. For instance, $U_R$ could be $u_3^2u_4^3$ corresponding to $R=\{3,3,4,4,4\}$.
    \item The sign convention in the equivariant structure coefficient described in \eqref{thm_eq_st_con} differs from the formulae appearing in \cite{KnTa}. This discrepancy arises from a permutation
$\sigma _r:\{ 1,2,\dots ,n\} \rightarrow \{ 1,2,\dots ,n\}$  defined by
$\sigma _r(i)=n+1-i$.
The same permutation acts on the polynomial ring $\mathbb{Z}[y_1,\dots ,y_n]$ via
$\sigma _r(y_i)=y_{n+1-i}$, and therefore induces an action on $H_{T^n}^*(\mathrm{G}(k,n);\mathbb{Z})$.
If $\{ S_{\lambda^i}\}_{i=0}^m$ denotes the basis of
$H_{T^n}^*(\G(k,n);\mathbb{Z})$ used in \cite{KnTa}, then the two conventions are related by $\widetilde{\mathbb{S}}_{\lambda^i}=\sigma_r(\widetilde{S}_{\lambda^i}).$
\end{enumerate}
\end{remark}

We recall $(\CC^*)^{n+1}$-action on $Pl(k,n)$ from \eqref{eq_c_action} and the subgroup $WD$ of $(\CC^*)^{n+1}$ from \eqref{eq_wd}.
The $T^{n+1}$ action on $Pl(k,n)$ induces an $T_{\bf b}$ action on $\G_{\bf{b}}(k,n)$, where  $T_{\bf b}:=\frac{T^{n+1}}{WD}$. We denote $H^*(BT^{n+1};\QQ)=\QQ[y_1,\dots,y_n,z]$ and $H^{*}(BT_{{\bf{b}}};\QQ)=\QQ[x_1,\dots,x_n]$, where $x_r=y_r-\frac{w_r}{a}z$ for $1\leq r \leq n$.

There is an isomorphism between the two equivariant cohomology ring
$$\pi^{*}:H_{T^n}^{*}(\G(k,n);\QQ)\to H^{*}_{T^{n+1}}(Pl(k,n);\QQ)$$ as algebras over  $H^{*}(BT^n;\QQ)=\QQ[y_1,\dots,y_n]$. Moreover,
The advantage of taking the $T_{\bf{b}}$ action on $\G_{\bf{b}}(k,n)$ is 
$$\pi^{*}_{{\bf{b}}}:H_{T_{\bf{b}}}^{*}(\G_{\bf{b}}(k,n);\QQ)\to H^{*}_{T^{n+1}}(Pl(k,n);\QQ)$$
is isomorphism as algebras over $H^{*}(BT_{\bf{b}};\QQ)=\QQ[x_1,\dots,x_n]$, see \cite[Proposition 3.1]{AbMa1}. In addition there exists an isomorphism $$\phi^{*}:H^{*}(BT_{\bf{b}};\QQ)\to H^{*}(BT^n;\QQ)$$ defined by $\phi^{*}(x_i)=y_i$. Moreover, $\phi^*$ induce an isomorphism
$$H_{T_{\bf{b}}}^{*}(\G_{\bf{b}} (k,n);\QQ)\cong H_{T^n}^{*}(\G_{\bf{b}} (k,n);\QQ).$$ 
This follows immediately from the explicit descriptions of the corresponding GKM rings given in
Theorem \ref{thm_cohom_rig} and \cite[Theorem 4.1]{AbMa1} respectively.

\begin{figure}
    \centering
    \begin{tikzpicture}

\node (A) at (-.5,3) {$H^*_{T^n}(\mathrm{Gr}(k,n);\mathbb{Q})$};
\node (B) at (9,3) {$H^*_{T^{n+1}}(\mathbb{P}\ell(k,n);\mathbb{Q})$};

\node (C) at (-.5,0) {$H^*_{T^n}(\mathrm{Gr}_b(k,n);\mathbb{Q})$};
\node (D) at (9,0) {$H^*_{T_b}(\mathrm{Gr}_b(k,n);\mathbb{Q})$};

\node at (4.5,2.7) {$\pi^*$};
\node at (8.5,1.5) {$\pi_b^*$};
\node at (4.5,0.3) {$\phi^*$};
\node at (4.5,-0.3) {$y_i\longleftarrow x_i$};

\node at (4,3.3) {$\mathbb{Q}[y_1,\dots,y_n]\text{-algebra isomorphism}$};
\node at (10.8,1.5) {$\mathbb{Q}[x_1,\dots,x_n]\text{-algebra}$};
\node at (10.6,1) {isomorphissm};
\node at (0.8,1.5) {${\bf b}=(1,\dots,1)$};

\draw[->] (A) -- (B);
\draw[->] (D) -- (C);

\draw[->] (C) -- (A);
\draw[->] (D) -- (B);

\end{tikzpicture}
    \caption{A diagram explaining the algebra isomorphisms}
    \label{eq_comm_dia}
\end{figure}

\begin{proposition}\cite[Theorem 4.2]{AbMa1}\label{thm_pl_cohom}
The $T^{n+1}$-equivariant cohomology ring of $Pl(k,n)$ with rational coefficients is given by 
\begin{align*}
H_{T^{n+1}}^{*}(Pl(k,n);\mathbb{Q})= \Big \{\alpha\in \bigoplus_{i=0}^m \mathbb{Q}[y_1,\dots, y_n,z]/(Y_{\lambda^i}+z)~\big{|}~&(\alpha_{i}-\alpha_{j})\in (Y_{\lambda^i}+z,Y_{\lambda^j}+z)\\ &   \text{ if } |\lambda^i\cap \lambda^j|=k-1 \Big \}.  
	\end{align*}
\end{proposition}

For every $i\in \{0,1\dots,m\}$, define $P\widetilde{\mathbb{S}}_{\lambda^i}\in H^{*}_{T^{n+1}}(Pl(k,n);\QQ)$ by 
\begin{equation}\label{eq_pi}
P\widetilde{\mathbb{S}}_{\lambda^i}:=\pi^{*}(\widetilde{\mathbb{S}}_{\lambda^i}).
\end{equation}

Since $\pi^{*}$ is an algebra isomorphism, $\{P\widetilde{\mathbb{S}}_{\lambda^i}\}_{i=0}^m$  is a basis for $ H^{*}_{T^{n+1}}(Pl(k,n);\QQ)$ as a $\QQ[y_1,\dots,y_n]$-algebra. Moreover, $(\pi^{*}_{{\bf{b}}})^{-1}(P\widetilde{\mathbb{S}}_{\lambda^i})$ forms a basis of $H_{T_{\bf{b}}}^{*}(\G_{\bf{b}} (k,n);\QQ)$ as a $\QQ[x_1,\dots,x_n]$-algebra. The next Lemma follows from the the diagram in Figure \ref{eq_comm_dia} and the uniqueness of the equivariant Schubert class ${\bf b}\mathbb{S}_{\lambda^i}$.

\begin{lemma}\label{eq_bs_lam}
  \begin{equation*}
{\bf{b}}\widetilde{\mathbb{S}}_{\lambda^i}=\phi^*\circ(\pi^{*}_{{\bf{b}}})^{-1}(P\widetilde{\mathbb{S}}_{\lambda^i}).
\end{equation*}  
\end{lemma}

\begin{proof}
  From Definition \ref{prop_eq_Sc_cls}, it follows  $\widetilde{\mathbb{S}}_{\lambda^i}|_{\lambda^i}=\prod_{\lambda^\ell\in R(\lambda^i)}(Y_{\lambda^\ell}-Y_{\lambda^i})$. 
	Thus $$P\widetilde{\mathbb{S}}_{\lambda^i}|_{\lambda^i}=\prod_{\lambda^\ell\in R(\lambda^i)}(Y_{\lambda^\ell}-Y_{\lambda^i})=\prod_{\lambda^\ell\in R(\lambda^i)}(Y_{\lambda^\ell}+z)\in\mathbb{Q}[y_1,y_2,\dots, y_n,z]/(Y_{\lambda^i}+z).$$	
Since $x_r=y_r-\frac{w_r}{a}z$, we have $X_{\lambda^i}=Y_{\lambda^i}-(\frac{b_i-a}{a})z$. Thus

$$
 \prod_{\lambda^\ell\in R(\lambda^i)}(X_{\lambda^\ell}-\dfrac{b_{\ell}}{b_i}X_{\lambda^i})=\prod_{\lambda^\ell\in R(\lambda^i)}(Y_{\lambda^\ell}+z)-\dfrac{b_{\ell}}{b_i}(Y_{\lambda^i}+z)=\prod_{\lambda^\ell\in R(\lambda^i)}(Y_{\lambda^\ell}+z) \in {\QQ[y_1,\dots,y_n]}/{(Y_{\lambda^i}+z)}.$$
   \begin{align*}
 \text{ Also, } 
 \phi^*\Big(\prod_{\lambda^\ell\in R(\lambda^i)}(X_{\lambda^\ell}-\dfrac{b_{\ell}}{b_i}X_{\lambda^i})\Big)=\prod_{\lambda^\ell\in R(\lambda^i)}(Y_{\lambda^\ell}-\dfrac{b_{\ell}}{b_i}Y_{\lambda^i})=
{\bf{b}}\widetilde{\mathbb{S}}_{\lambda^i}|_{\lambda^i}.
\end{align*}
Since $\pi^{*}$ is an isomorphism, $P\widetilde{\mathbb{S}}_{\lambda^i}|_{\lambda^j}\neq 0  \implies \lambda^j \succeq \lambda^i$. Similarly, $ \phi^*\circ(\pi^{*}_{{\bf{b}}})^{-1}(P\widetilde{\mathbb{S}}_{\lambda^i})$ satisfies the same property as $P\widetilde{\mathbb{S}}_{\lambda^i}$. Thus $P\widetilde{\mathbb{S}}_{\lambda^i}$ satisfies all the assumptions of Definition \ref{prop_eq_Sc_cls}.
Now the proof follows from the uniqueness of ${\bf{b}}\widetilde{\mathbb{S}}_{\lambda^i}$.
\end{proof}

For the sake of computing the equivariant structure constants ${\bf{b}}\widetilde{\mathscr{C}}_{i~j}^{\ell}$, we view the product  $U_R{\cdot}P\widetilde{\mathbb{S}}_{\lambda^q}$ in  $H_{T^{n+1}}^*(Pl(k,n))$ as a $\QQ[x_1,\dots x_n]$ algebra. Note that 
\begin{align}\label{eq_x_alg}
U_R=\prod_{r\in R}(y_r-y_{r+1})=\prod_{r\in R}\Big((x_r-x_{r+1})+(w_r-w_{r+1})\frac{z}{a}\Big)=\sum_{s=0}^{|R|}L_{s,R}(\frac{z}{a})^s.
\end{align}
The coefficients $L_{s,R}$ can be computed by the following, where we consider $S$ as a sub collection of $R$ of cardinality $s$
\begin{equation}\label{eq_Lsqr}
    L_{s,R}=\sum_{S:~|S|=s}\Big(\prod_{r\in S}(w_r-w_{r+1})\prod_{r\in R\setminus S}(x_r-x_{r+1})\Big)
\end{equation}
\begin{remark}\label{rem_1st_coef}
    If $s=|R|$, then $ L_{s,R}=\prod_{r\in R}(w_r-w_{r+1})$.
\end{remark}

For, $k<n$, the smallest Schubert symbol $\lambda^0$ is given by $\lambda^0=(1,2,\dots,k)$. Moreover the Schubert symbol $\lambda^1$ is given by $\lambda^1=(1,2,\dots,k-1,k+1)$.

\begin{proposition}\label{prop_ps_lam}
	$P\widetilde{\mathbb{S}}_{\lambda^1}|_{\lambda^i}=(Y_{\lambda^0}+z)$ for $i=1,2,\dots,m$.
\end{proposition}
\begin{proof}
Using \cite[Lemma 5.6]{BS}, we have 
$$\widetilde{\mathbb{S}}_{\lambda^1}|_{\lambda^i}=(Y_{\lambda^0}-Y_{\lambda^i}).$$
Since, $P\widetilde{\mathbb{S}}_{\lambda^1}=\pi^{*}(\widetilde{\mathbb{S}}_{\lambda^1})$ we have
\[P\widetilde{\mathbb{S}}_{\lambda^1}|_{\lambda^i}=Y_{\lambda^0}-Y_{\lambda^i}=Y_{\lambda^0}+z\in \frac{\QQ[y_1,\dots,y_n,z]}{(Y_{\lambda^i}+z)}.\]
\end{proof}

For any two Schubert symbol $\lambda^i$ and $\lambda^j$, we denote $\lambda^j \to \lambda^i$ if $d(\lambda^j) = d(\lambda^i)+1$
and $\lambda^i\preceq\lambda^j$. Following \cite[Proposition 2]{KnTa}, and Proposition \ref{prop_ps_lam}, the Pieri rule in $H_{T^{n+1}}^*(Pl(k,n))$ as a $\QQ[y_1,\dots,y_n]$ algebra is given by
\begin{align}\label{eq_pro_pl_dn}
(Y_{\lambda^0}+z)P\widetilde{\mathbb{S}}_{\lambda^q}&=(Y_{\lambda^0}-Y_{\lambda^q})P\widetilde{\mathbb{S}}_{\lambda^q}+\sum_{\lambda^\ell:~\lambda^\ell\to \lambda^q} P\widetilde{\mathbb{S}}_{\lambda^\ell}.\nonumber\\
\implies z{\cdot}P\widetilde{\mathbb{S}}_{\lambda^q}&=-Y_{\lambda^q}P\widetilde{\mathbb{S}}_{\lambda^q}+\sum_{\lambda^\ell:~\lambda^\ell\to \lambda^q} P\widetilde{\mathbb{S}}_{\lambda^\ell}.
\end{align}

The ring $H_{T^{n+1}}^*(Pl(k,n))$ can also be considered  as a $\QQ[x_1,\dots,x_n]$ algebra. In order to describe the above product as a $\QQ[x_1,\dots,x_n]$ algebra we substitute $y_r=x_r+\frac{w_r}{a}z$ for all $r=1,2,\dots,n$ in the RHS of \eqref{eq_pro_pl_dn}. Thus
\begin{align}\label{eq_alg_rel}
    &z{\cdot}P\widetilde{\mathbb{S}}_{\lambda^q}=-(X_{\lambda^q}+\frac{b_q-a}{a}z)P\widetilde{\mathbb{S}}_{\lambda^q}+\sum_{\lambda^\ell:~\lambda^\ell\to \lambda^q} P\widetilde{\mathbb{S}}_{\lambda^\ell}\nonumber\\
    &\implies\frac{z}{a}{\cdot}P\widetilde{\mathbb{S}}_{\lambda^q}=-\frac{1}{b_q}X_{\lambda^q} P\widetilde{\mathbb{S}}_{\lambda^q}+\frac{1}{b_q}\sum_{\lambda^\ell:~\lambda^\ell\to \lambda^q} P\widetilde{\mathbb{S}}_{\lambda^\ell}.
\end{align}

\begin{remark}
The explicit relation between the $\QQ[x_1,\dots,x_n]$ algebra structures and $\QQ[y_1,\dots,y_n]$ algebra structures on $H_{T^{n+1}}^*(Pl(k,n))$ follows from \eqref{eq_alg_rel}. More explicitly, for $1\leq r\leq n$     $$y_rP\widetilde{\mathbb{S}}_{\lambda^q}=(x_r+\frac{w_r}{a}z)P\widetilde{\mathbb{S}}_{\lambda^q}=x_rP\widetilde{\mathbb{S}}_{\lambda^q}-\frac{w_r}{b_q}X_{\lambda^q} P\widetilde{\mathbb{S}}_{\lambda^q}+\frac{w_r}{b_q}\sum_{\lambda^\ell:~\lambda^\ell\to \lambda^q} P\widetilde{\mathbb{S}}_{\lambda^\ell}.$$
\end{remark}

 For $s\geq 1$, repeated evaluation of the product in \eqref{eq_alg_rel} leads to the following formula.
\begin{equation*}
    (\frac{z}{a})^sP\widetilde{\mathbb{S}}_{\lambda^q}=\sum_{\lambda^\ell\succeq\lambda^q} {\bf{b}}\widetilde{{C}}_{q~g^s}^{\ell}P\widetilde{\mathbb{S}}_{\lambda^\ell}.
\end{equation*}

The coefficients ${\bf{b}}\widetilde{{C}}_{q~g^s}^{\ell}$ is given by the following:
\begin{equation}\label{eq_rep_wgt_per_rul}
  {\bf{b}}\widetilde{{C}}_{q~g^s}^{\ell} =\sum_{\substack{\lambda^\ell=\lambda^{\ell_0}\to\lambda^{\ell_1} \to\\ \dots\to\lambda^{\ell_r} =\lambda^q}}\frac{1}{b_{\ell_1}b_{\ell_2}\dots b_{\ell_r}} \sum_J(\prod_{t=0}^r (-\frac{1}{b_{\ell_t}}X_{\lambda^{\ell_t}})^{j_t}),  
\end{equation}
where $J$ runs over all sequences $(j_0,j_1,\dots,j_r)$ of non negative integers satisfying $j_0+j_1+\dots+j_r=s-r$.
Note that $r=d(\lambda^\ell)-d(\lambda^q)$. Thus ${\bf{b}}\widetilde{{C}}_{q~g^s}^{\ell}=0$ if $d(\lambda^\ell)>s+d(\lambda^q)$.

\begin{remark}\label{rem_2nd_coef}
    If $d(\lambda^\ell)=d(\lambda^q)+s$ then $${\bf{b}}\widetilde{{C}}_{q~g^s}^{\ell} =\sum_{\substack{\lambda^\ell=\lambda^{\ell_0}\to\lambda^{\ell_1} \to\\ \dots\to\lambda^{\ell_s} =\lambda^q}}\frac{1}{b_{\ell_1}b_{\ell_2}\dots b_{\ell_s}}.$$
\end{remark}

 We define $\mathcal{L}_{s,R}:=\phi^*(L_{s,R})$ and ${\bf{b}}\widetilde{\mathscr{C}}_{q~g^s}^{\ell}:=\phi^*({\bf{b}}\widetilde{{C}}_{q~g^s}^{\ell})$. In other words, if we substitute $x_i$ by $y_i$ in $L_{s,R}$ and ${\bf{b}}\widetilde{{C}}_{q~g^s}^{\ell}$ we get $\mathcal{L}_{s,R}$ and ${\bf{b}}\widetilde{\mathscr{C}}_{q~g^s}^{\ell}$ respectively.

\begin{theorem}\label{thm_wei_st_con}
The equivariant structure constant ${\bf{b}}\widetilde{\mathscr{C}}_{i~j}^{\ell}$ is given by $${\bf{b}}\widetilde{\mathscr{C}}_{i~j}^{\ell}=\sum_{\lambda^\ell\succeq\lambda^q\succeq \lambda^i,\lambda^j}\sum_{R}\sum_{s=0}^{|R|}K(i,j,q,R)\mathcal{L}_{s,R} {\bf{b}}\widetilde{\mathscr{C}}_{q~g^s}^{\ell}.$$
\end{theorem}
\begin{proof}
	Using Lemma \ref{eq_bs_lam} in  \eqref{eq_eq_st_con} together with \eqref{eq_x_alg} we have
	\begin{align}\label{eq_eq_st_con}
{\bf{b}}\widetilde{\mathbb{S}}_{\lambda^i}{\bf{b}}\widetilde{\mathbb{S}}_{\lambda^j}&=\sum_{|R|=d(\lambda^i)+d(\lambda^j)-d(\lambda^q)}K(i,j,q,R)\sum_{s=0}^{|R|}\mathcal{L}_{s,R} (\frac{z}{a})^s{\bf{b}}\widetilde{\mathbb{S}}_{\lambda^q}\nonumber\\
  &=\sum_{|R|=d(\lambda^i)+d(\lambda^j)-d(\lambda^q)}K(i,j,q,R)\sum_{s=0}^{|R|}\mathcal{L}_{s,R} \sum_{\lambda^\ell\succeq\lambda^q}{\bf{b}}\widetilde{\mathscr{C}}_{q~g^s}^{\ell}{\bf{b}}\widetilde{\mathbb{S}}_{\lambda^\ell} \\   &=\sum_{\lambda^\ell}\left(\sum_{\lambda^\ell\succeq\lambda^q\succeq \lambda^i,\lambda^j}\sum_{R}\sum_{s=0}^{|R|}K(i,j,q,R)\mathcal{L}_{s,R}  {\bf{b}}\widetilde{\mathscr{C}}_{q~g^s}^{\ell}\right){\bf{b}}\widetilde{\mathbb{S}}_{\lambda^\ell}\nonumber
	\end{align}
\end{proof}

\begin{corollary}[Equivariant Pieri rule] \cite[Proposition 5.7]{BS}\label{prop_eq_wgt_pie_rul}
$${\bf{b}}\widetilde{\mathbb{S}}_{\lambda^1}{\bf{b}}\widetilde{\mathbb{S}}_{\lambda^i}=(Y_{\lambda^0}-\frac{b_{0}}{b_{i}}Y_{\lambda^i}){\bf{b}}\widetilde{\mathbb{S}}_{\lambda^i}+\frac{b_{0}}{b_{i}}\sum_{\lambda^j: \lambda^j\rightarrow \lambda^i}{\bf{b}}\widetilde{\mathbb{S}}_{\lambda^j}.$$
\end{corollary}

\begin{proof}
 Following \cite[Proposition 2]{KnTa}, and the isomorphism $\pi^*$ the Pieri rule in $H_{T^{n+1}}^*(Pl(k,n))$ is given by
\begin{align}\label{eq_pieri_rule}
P\widetilde{\mathbb{S}}_{\lambda^1}P\widetilde{\mathbb{S}}_{\lambda^i}&=(Y_{\lambda^0}-Y_{\lambda^i})P\widetilde{\mathbb{S}}_{\lambda^i}+\sum_{\lambda^j: \lambda^j\rightarrow \lambda^i}P\widetilde{\mathbb{S}}_{\lambda^j}
\end{align}
To describe the product as a $\QQ[x_1,\dots,x_n]$-algebra we substitute $Y_{\lambda^i}=X_{\lambda^i}+\frac{b_i-a}{a}z$. Thus
\begin{align*}
    (Y_{\lambda^0}-Y_{\lambda^i})P\widetilde{\mathbb{S}}_{\lambda^i}&=(X_{\lambda^0}-X_{\lambda^i}+(b_0-b_i)\frac{z}{a})P\widetilde{\mathbb{S}}_{\lambda^i}\\
     &=(X_{\lambda^0}-X_{\lambda^i})P\widetilde{\mathbb{S}}_{\lambda^i}+\frac{b_0-b_i}{b_i}((-X_{\lambda^i})P\widetilde{\mathbb{S}}_{\lambda^i}+\sum_{\lambda^j: \lambda^j\rightarrow \lambda^i}P\widetilde{\mathbb{S}}_{\lambda^j})\text{ (using }\eqref{eq_alg_rel})\\
     &=(X_{\lambda^0}-\frac{b_0}{b_i}X_{\lambda^i})P\widetilde{\mathbb{S}}_{\lambda^i}+\frac{b_0-b_i}{b_i}\sum_{\lambda^j: \lambda^j\rightarrow \lambda^i} P\widetilde{\mathbb{S}}_{\lambda^j}
 \end{align*}
Then the proof follows from  \eqref{eq_pieri_rule} using Lemma \ref{eq_bs_lam}.
\end{proof}

Next, we prove that 	${\bf{b}}\widetilde{\mathscr{C}}_{i~j}^{\ell}\in \ZZ[y_1,y_2,\dots,y_n]$ for all $i,j$ and $\ell$.
We call a polynomial in $ \ZZ[y_1,y_2,\dots,y_n]$ is primitive if the gcd of all coefficients of the monomials is one.

\begin{theorem}\label{prop_st_con_int}
Let ${\rm Gr}_{\bf{b}}(k,n)$ be a divisive weighted Grassmann orbifold. Then the equivariant structure constants ${\bf{b}}\widetilde{\mathscr{C}}_{i~j}^{\ell}$ are in $ \ZZ[y_1,y_2,\dots,y_n]$ for all $i,j,\ell\in \{0,1,\dots,m\}$.
\end{theorem}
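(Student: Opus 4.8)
The plan is to recover each structure constant by restricting the product formula \eqref{eq_wgt_mul} to the torus fixed points $\{\lambda^\ell\}_{\ell=0}^m$ and then solving the resulting triangular system, the integrality being forced by a Gauss's lemma argument. Two facts are available as input: by Theorem~\ref{thm_wei_st_con} each ${\bf{b}}\widetilde{\mathscr{C}}_{i~j}^{\ell}$ lies a priori in $\QQ[y_1,\dots,y_n]$, and by Theorem~\ref{thm_int_eq_coh_div_gsm} every restriction ${\bf{b}}\widetilde{\mathbb{S}}_{\lambda^p}|_{\lambda^q}$ lies in $\ZZ[y_1,\dots,y_n]$. So the whole task is to upgrade the coefficients ${\bf{b}}\widetilde{\mathscr{C}}_{i~j}^{\ell}$ from $\QQ$ to $\ZZ$.

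First I would isolate the crucial integrality-and-primitivity input, which is exactly where divisiveness is used. Since ${\rm Gr}_{\bf b}(k,n)$ is divisive with $b_i\mid b_{i-1}$, whenever $\lambda^e\in R(\lambda^\ell)$ we have $e<\ell$ and hence $b_\ell\mid b_e$, so $\tfrac{b_e}{b_\ell}\in\ZZ_{\geq 1}$. By Proposition~\ref{prop_eq_Sc_cls}(2) this gives
$${\bf{b}}\widetilde{\mathbb{S}}_{\lambda^\ell}|_{\lambda^\ell}=\prod_{\lambda^e\in R(\lambda^\ell)}\Big(Y_{\lambda^e}-\tfrac{b_e}{b_\ell}\,Y_{\lambda^\ell}\Big)\in\ZZ[y_1,\dots,y_n].$$
Each factor is a primitive polynomial: as $\lambda^e$ is obtained from $\lambda^\ell$ by a single reversal, there is a unique index $r\in\lambda^e\setminus\lambda^\ell$, and the coefficient of $y_r$ in $Y_{\lambda^e}-\tfrac{b_e}{b_\ell}Y_{\lambda^\ell}$ is $1$. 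Since the content is multiplicative (Gauss's lemma), the product of primitive polynomials is primitive, so ${\bf{b}}\widetilde{\mathbb{S}}_{\lambda^\ell}|_{\lambda^\ell}$ is a primitive element of $\ZZ[y_1,\dots,y_n]$ (and equals $1$ when $\lambda^\ell=\lambda^0$).

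Next I would fix $i,j$ and induct on $\ell$ along the total order \eqref{eq_total_ord}. Restricting \eqref{eq_wgt_mul} to the fixed point $\lambda^\ell$ and using the upper triangularity ${\bf{b}}\widetilde{\mathbb{S}}_{\lambda^q}|_{\lambda^\ell}\neq 0\Rightarrow \lambda^q\preceq\lambda^\ell$ from Proposition~\ref{prop_eq_Sc_cls}(1) (which kills every term with $q>\ell$), I split off the diagonal term $q=\ell$ to obtain
$${\bf{b}}\widetilde{\mathscr{C}}_{i~j}^{\ell}\cdot{\bf{b}}\widetilde{\mathbb{S}}_{\lambda^\ell}|_{\lambda^\ell}=\big({\bf{b}}\widetilde{\mathbb{S}}_{\lambda^i}|_{\lambda^\ell}\big)\big({\bf{b}}\widetilde{\mathbb{S}}_{\lambda^j}|_{\lambda^\ell}\big)-\sum_{\lambda^q\prec\lambda^\ell}{\bf{b}}\widetilde{\mathscr{C}}_{i~j}^{q}\cdot{\bf{b}}\widetilde{\mathbb{S}}_{\lambda^q}|_{\lambda^\ell}.$$
The first summand on the right is a product of two elements of $\ZZ[y_1,\dots,y_n]$, and every $\lambda^q\prec\lambda^\ell$ satisfies $q<\ell$, so by the inductive hypothesis ${\bf{b}}\widetilde{\mathscr{C}}_{i~j}^{q}\in\ZZ[y_1,\dots,y_n]$; hence the entire right-hand side lies in $\ZZ[y_1,\dots,y_n]$.

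Finally I would descend from $\QQ$ to $\ZZ$. At this stage ${\bf{b}}\widetilde{\mathscr{C}}_{i~j}^{\ell}\cdot{\bf{b}}\widetilde{\mathbb{S}}_{\lambda^\ell}|_{\lambda^\ell}\in\ZZ[y_1,\dots,y_n]$, with ${\bf{b}}\widetilde{\mathscr{C}}_{i~j}^{\ell}\in\QQ[y_1,\dots,y_n]$ and ${\bf{b}}\widetilde{\mathbb{S}}_{\lambda^\ell}|_{\lambda^\ell}$ primitive in $\ZZ[y_1,\dots,y_n]$. Writing ${\bf{b}}\widetilde{\mathscr{C}}_{i~j}^{\ell}=\tfrac{a}{b}\,g$ with $g\in\ZZ[y_1,\dots,y_n]$ primitive and $\gcd(a,b)=1$, multiplicativity of content forces $b=1$, so ${\bf{b}}\widetilde{\mathscr{C}}_{i~j}^{\ell}\in\ZZ[y_1,\dots,y_n]$, completing the induction. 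The hard part is the primitivity of the diagonal restriction ${\bf{b}}\widetilde{\mathbb{S}}_{\lambda^\ell}|_{\lambda^\ell}$: this is precisely where divisiveness is indispensable, both to make the factors $\tfrac{b_e}{b_\ell}$ integral and to keep a unit coefficient in each reversal factor; the remaining triangular back-substitution and the Gauss's lemma descent are then routine.
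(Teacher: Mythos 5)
Your proposal is correct and follows essentially the same route as the paper: restrict the product formula to the fixed points, exploit the upper triangularity of the basis to isolate the diagonal term ${\bf{b}}\widetilde{\mathscr{C}}_{i~j}^{\ell}\,{\bf{b}}\widetilde{\mathbb{S}}_{\lambda^\ell}|_{\lambda^\ell}$, and descend from $\QQ$ to $\ZZ$ using primitivity of ${\bf{b}}\widetilde{\mathbb{S}}_{\lambda^\ell}|_{\lambda^\ell}$ together with Gauss's lemma (which the paper carries out by reducing modulo each prime factor of the denominator, while you invoke multiplicativity of content — the same argument). The only real difference is organizational (induction along the total order versus iterating over minimal elements of the support), and your explicit verification that each factor $Y_{\lambda^e}-\tfrac{b_e}{b_\ell}Y_{\lambda^\ell}$ has a unit coefficient is a detail the paper merely asserts.
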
 
\begin{proof}
Let $\lambda^q$ be a minimal Schubert symbol such that $\lambda^i \preceq \lambda^q$ and $\lambda^j \preceq \lambda^q$ both holds Here $\lambda^q$ is minimal means there is no other Schubert symbol $\lambda^p$ such that $\lambda^p\prec \lambda^q$ and $\lambda^i \preceq \lambda^p$, $\lambda^j \preceq \lambda^p$ both hold. Therefore, 
 $({\bf{b}}\widetilde{\mathbb{S}}_{\lambda^i}{\bf{b}}\widetilde{\mathbb{S}}_{\lambda^j})|_{\lambda^\ell}\neq0$ implies $\lambda^\ell \succeq \lambda^q$ and consequently ${\bf{b}}\widetilde{\mathscr{C}}_{i~j}^{\ell}\neq 0$ implies $\lambda^\ell \succeq \lambda^q$. Note that if $\lambda^{\ell}\succ \lambda^q$ then ${\bf{b}}\widetilde{\mathbb{S}}_{\lambda^\ell}|_{\lambda^q}=0$. Considering the $\lambda^q$-th component in the both the side of \eqref{eq_wgt_mul} we have 
\begin{equation}\label{eq_mul_d}
({\bf{b}}\widetilde{\mathbb{S}}_{\lambda^i}{\bf{b}}\widetilde{\mathbb{S}}_{\lambda^j})|_{\lambda^q}={\bf{b}}\widetilde{\mathscr{C}}_{i~j}^{q}{\bf{b}}\widetilde{\mathbb{S}}_{\lambda^q}|_{\lambda^q}.   \end{equation}

	Now  $({\bf{b}}\widetilde{\mathbb{S}}_{\lambda^i}{\bf{b}}\widetilde{\mathbb{S}}_{\lambda^j})|_{\lambda^q}\in \ZZ[y_1,y_2,\dots,y_n]$ and $${\bf{b}}\widetilde{\mathbb{S}}_{\lambda^q}|_{\lambda^q}=\prod_{\lambda^\ell \in R(\lambda^q)}(Y_{\lambda^\ell}-\dfrac{b_{\ell}}{b_{q}}Y_{\lambda^q})\in \ZZ[y_1,y_2,\dots,y_n].$$
 Using Theorem \ref{thm_wei_st_con}, ${\bf{b}}\widetilde{\mathscr{C}}_{i~j}^{q}\in \QQ[y_1,y_2,\dots,y_n]$. If ${\bf{b}}\widetilde{\mathscr{C}}_{i~j}^{q}\notin \ZZ[y_1,y_2,\dots,y_n]$, let $d$ be the product of the denominators of the coefficients of the monomials in the expression of ${\bf{b}}\widetilde{\mathscr{C}}_{i~j}^{q}$. Then $\overline{\bf{b}}\widetilde{\mathscr{C}}_{i~j}^{q}:=d{\cdot}{\bf{b}}\widetilde{\mathscr{C}}_{i~j}^{q}\in \ZZ[y_1,\dots,y_n]$.

Now multiplying both sides of \eqref{eq_mul_d} by $d$ we get
\begin{equation}\label{eq_gauss_lem}
    d{\cdot}({\bf{b}}\widetilde{\mathbb{S}}_{\lambda^i}{\bf{b}}\widetilde{\mathbb{S}}_{\lambda^j})|_{\lambda^q}=\overline{\bf{b}}\widetilde{\mathscr{C}}_{i~j}^{q}\prod_{\lambda^\ell \in R(\lambda^q)}(Y_{\lambda^\ell}-\dfrac{b_{\ell}}{b_{q}}Y_{\lambda^q})
\end{equation}

Let $p$ be a prime divisor of $d$. Then reducing \eqref{eq_gauss_lem} modulo $p$, we obtain the following:

$$0=\mathscr{P}_1\mathscr{P}_2 ~~~~(\text{mod}~ p),$$

where $\mathscr{P}_1=\overline{\bf{b}}\widetilde{\mathscr{C}}_{i~j}^{q}~~~~(\text{mod}~ p)$ and $\mathscr{P}_2=\prod_{\lambda^\ell \in R(\lambda^q)}(Y_{\lambda^\ell}-\dfrac{b_{\ell}}{b_{q}}Y_{\lambda^q})~~~~(\text{mod}~ p)$.

Note that $\prod_{\lambda^\ell \in R(\lambda^q)}(Y_{\lambda^\ell}-\dfrac{b_{\ell}}{b_{q}}Y_{\lambda^q})$ is a primitive polynomial. Thus $\mathscr{P}_2\neq 0~~~~(\text{mod}~ p)$. Also $({\ZZ}/{p\ZZ})[y_1,\dots,y_n]$ is an integral domain. Thus $\mathscr{P}_1= 0~~~~(\text{mod}~ p)$. Thus $\frac{1}{p}\overline{\bf{b}}\widetilde{\mathscr{C}}_{i~j}^{q}$ is in $\ZZ[y_1,\dots,y_n]$. In other words, we can cancel a factor of $p$ in $d$  in \eqref{eq_gauss_lem}, and still have an equation in $\ZZ[y_1,\dots,y_n]$. Now using induction on the prime factors of $d$ we get $${\bf{b}}\widetilde{\mathscr{C}}_{i~j}^{q}=\frac{1}{d}\overline{\bf{b}}\widetilde{\mathscr{C}}_{i~j}^{q}\in \ZZ[y_1,\dots,y_n].$$

Next, consider the minimal Schubert symbol $\lambda^r$ such that 
$$({\bf{b}}\widetilde{\mathbb{S}}_{\lambda^i}{\bf{b}}\widetilde{\mathbb{S}}_{\lambda^j}-{\bf{b}}\widetilde{\mathscr{C}}_{i~j}^{q}{\bf{b}}\widetilde{\mathbb{S}}_{\lambda^q})|_{\lambda^r}\neq 0.$$ 
Then we follow the same argument to conclude
$${\bf{b}}\widetilde{\mathscr{C}}_{i~j}^{r}\in\ZZ[y_1,y_2,\dots,y_n].$$  
Now we can complete the proof inductively.
\end{proof}

Next, we prove the positivity condition of the equivariant structure constants ${\bf{b}}\widetilde{\mathscr{C}}_{i~j}^{\ell}$ in the sense of \cite{Gra}.
\begin{theorem}\label{thm_pos_eq_st_con}
The equivariant structure constant ${\bf{b}}\widetilde{\mathscr{C}}_{i~j}^{\ell}$ can be written as a polynomial of $y_{r}-y_{r+1}$ for $r=1,2,\dots,{n-1}$ and $(-Y_{\lambda^0})$ with coefficients are non negative integers. 
\end{theorem}
\begin{proof}
 The authors in \cite{KnTa} proved that $K(i,j,q,R)\geq 0$. For every $r\in \{1,2,\dots,n\}$ there exist Schubert symbol $\lambda^e$ and $\lambda^{f}$ such that $\lambda^e<\lambda^f$ and $w_r-w_{r+1}=b_e-b_f$. Thus for a divisive weighted Grassmann orbifold $w_r-w_{r+1}\geq 0$. From \eqref{eq_Lsqr} it implies that $\mathcal{L}_{s,R}$ is a polynomial of $y_{r}-y_{r+1}$ for $r=1,2,\dots,{n-1}$ with non negative integer coefficients. 
  $$-Y_{\lambda^i}=(Y_{\lambda^0}-Y_{\lambda^i})-Y_{\lambda^0}, \text{ for } i\geq 0.$$
Now $(Y_{\lambda^0}-Y_{\lambda^i})$ can be written as a polynomial in $y_r-y_{r+1}$ for $r\in \{1,2,\dots,n-1\}$. 
Thus  ${\bf{b}}\widetilde{\mathscr{C}}_{q~g^s}^{\ell}$ described in  \eqref{eq_rep_wgt_per_rul} can be written as a polynomial of $y_{r}-y_{r+1}$ for $r=1,2,\dots,{n-1}$ and $(-Y_{\lambda^0})$ with non negative coefficients. Hence the proof. 
\end{proof}

\section{The integral cohomology ring structures of divisive weighted Grassmann orbifolds}\label{sec_int_coh}

In this section, we compute the integral cohomology rings of divisive weighted Grassmann orbifolds. First, we describe the integral cohomology groups of divisive weighted Grassmann orbifolds using Theorem \ref{thm_div_Gsm}. Then, we consider a basis for the cohomology groups and compute the cup product concerning that basis. The proof of the following proposition follows from Theorem \ref{thm_div_Gsm} and Corollary \ref{cor}. 

\begin{proposition}
Let $\G_{\bf{b}}(k,n)$ be a divisive weighted Grassmann orbifold. Then
\begin{align*}
H^j(\G_b(k,n);\ZZ)= 
     \begin{cases}
        \bigoplus_{i:2d(\lambda^i)=j}\ZZ &\quad\text{if}~  j \text { is even }\\
       0 &\quad\text{otherwise}.
     \end{cases}
\end{align*}
\end{proposition}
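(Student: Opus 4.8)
The plan is to deduce the statement directly from the cellular chain complex of the CW structure furnished by Theorem \ref{thm_div_Gsm}. First I would invoke Theorem \ref{thm_div_Gsm} to equip the divisive weighted Grassmann orbifold $\G_{\bf b}(k,n)$ with a CW complex structure whose cells are exactly $\{\CC^{d(\lambda^i)}\}_{i=0}^m$; here the cell indexed by the Schubert symbol $\lambda^i$ is a closed cell of real dimension $2d(\lambda^i)$. The crucial feature recorded by that theorem is that every cell is \emph{even}-dimensional.

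Next I would pass to the cellular chain complex $(C_\bullet,\partial_\bullet)$ of this CW structure. Since each cell has even real dimension, the chain group $C_j$ is trivial whenever $j$ is odd, while for even $j$ it is the free abelian group on the cells of dimension $j$, that is $C_j\cong\bigoplus_{i\,:\,2d(\lambda^i)=j}\ZZ$. Each cellular boundary map $\partial_j\colon C_j\to C_{j-1}$ joins chain groups in consecutive degrees, which necessarily have opposite parity, so at least one of $C_j,C_{j-1}$ is the zero group. Hence $\partial_j=0$ for every $j$, and the cellular homology collapses to $H_j(\G_{\bf b}(k,n);\ZZ)\cong C_j$, which is the asserted free abelian group when $j$ is even and $0$ when $j$ is odd. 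Finally, because all the integral homology groups are free abelian, the $\operatorname{Ext}$ term in the universal coefficient theorem vanishes, giving $H^j(\G_{\bf b}(k,n);\ZZ)\cong\operatorname{Hom}(H_j(\G_{\bf b}(k,n);\ZZ),\ZZ)$, a free group of the same rank as $H_j$. This yields precisely the stated formula.

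There is no essential obstacle in this argument: its entire force rests on Theorem \ref{thm_div_Gsm}, which supplies the CW structure with only even-dimensional cells. The two points meriting a little care are the bookkeeping between the complex dimension $d(\lambda^i)$ of a cell and its real dimension $2d(\lambda^i)$, and the observation that the parity argument forces every cellular boundary map to vanish regardless of how intricate the attaching maps may be, so that those maps never have to be computed. This is simply the standard principle that a CW complex with cells concentrated in even dimensions has torsion-free homology (and hence cohomology) concentrated in even degrees, with Betti numbers counting the cells of each dimension.
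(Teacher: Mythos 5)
Your argument is correct and is exactly the route the paper intends: the paper states this proposition as an immediate consequence of Theorem \ref{thm_div_Gsm} (the CW structure with only even-dimensional cells $\CC^{d(\lambda^i)}$), and your cellular-chain-complex computation together with the universal coefficient theorem is the standard way to make that deduction explicit.
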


Let $\varepsilon: \ZZ[y_1,\dots,y_n]\to \ZZ$ be the ring homomorphism defined by $\varepsilon(y_i)=0$. The ordinary cohomology ring $H^*(\G_{\bf b}(k,n);\ZZ)$ can be recovered from the equivariant cohomology ring by base change along $\varepsilon$.
$$H^*(\G_{\bf b}(k,n);\ZZ) \cong H_{T^n}^*(\G_{\bf b}(k,n);\ZZ) \otimes_{\ZZ[y_1,\dots,y_n]} \mathbb{Z}.$$ Geometrically, the map $\varepsilon$ corresponds to trivializing the torus action.
Thus, we obtain the forgetful map 
$$\xi: H_{T^n}^*(\G_{\bf b}(k,n);\ZZ) \longrightarrow H^*(\G_{\bf b}(k,n);\ZZ),$$
which simply sends each $y_i$ to zero. For every Schubert symbol $\lambda^i$, we can define the weighted Schubert class 
$${\bf{b}}{\mathbb{S}}_{\lambda^i}:=\xi({\bf{b}}{\widetilde{\mathbb{S}}}_{\lambda^i}).$$

Geometrically, ${\bf b}\widetilde{\mathbb{S}}_{\lambda^i}$ is the equivariant cohomology class in the integral equivariant cohomology ring of the divisive weighted Grassmann orbifold corresponding to the closure of the cell ${E}(\lambda ^i)$ of codimension $2d(\lambda^i)$.
Therefore, $\{{\bf{b}}{\mathbb{S}}_{\lambda^i}:2d(\lambda^i)=j\}$ forms a basis for the ordinary cohomology group $H^j(\G_{\bf{b}}(k,n);\ZZ)$. Let $\lambda^i$, $\lambda^j$ and $\lambda^\ell$ be three Schubert symbols such that $d(\lambda^i)+d(\lambda^j)=d(\lambda^\ell)$. Then we have the following product formulae in the ordinary cohomology ring of divisive weighted Grassmann orbifold.
\begin{equation}
	{\bf{b}}{\mathbb{S}}_{\lambda^i}{\bf{b}}{\mathbb{S}}_{\lambda^j}=\sum_{\lambda^\ell: d(\lambda^\ell)=d(\lambda^i)+d(\lambda^j) }{\bf{b}}{\mathscr{C}}_{i~j}^{\ell}{\bf{b}}{\mathbb{S}}_{\lambda^\ell}.
\end{equation}

By the third condition of Definition \ref{prop_eq_Sc_cls}, it follows that equivariant structure constants ${\bf{b}}\widetilde{\mathscr{C}}_{i~j}^{\ell}$ is a homogeneous polynomial in $\ZZ[y_1,y_2,\dots,y_n]$ of degree $d(\lambda^i)+d(\lambda^j)-d(\lambda^\ell)$. Applying the forgetful map $\xi$  kills all terms of positive degree in the $y_i$, so only those structure constants with $d(\lambda^i)+d(\lambda^j)=d(\lambda^\ell)$ survives. For the special case $d(\lambda^i)+d(\lambda^j)=d(\lambda^\ell)$ the equivariant structure constant ${\bf{b}}\widetilde{\mathscr{C}}_{i~j}^{\ell}$
are integers and agree with the structure constant ${\bf{b}}{\mathscr{C}}_{i~j}^{\ell}$ in ordinary cohomology. Therefore, Theorem \ref{thm_wei_st_con} yields an explicit formulae for the structure constant ${\bf{b}}{\mathscr{C}}_{i~j}^{\ell}$.
We recall for any two Schubert symbol $\lambda^i$ and $\lambda^j$ we denote $\lambda^j \to \lambda^i$ if $d(\lambda^j) = d(\lambda^i)+1$
and $\lambda^i\preceq\lambda^j$.
We have the following formulae from the Example \ref{prop_eq_wgt_pie_rul}.

\begin{proposition}[weighted Pieri rule] \label{prop_wgt_pr_ru}
	$${\bf{b}}{\mathbb{S}}_{\lambda^1}{\bf{b}}{\mathbb{S}}_{\lambda^i}=\sum_{\lambda^j:\lambda^j\to \lambda^i}\frac{b_0}{b_{i}}{\bf{b}}{\mathbb{S}}_{\lambda^j}$$
	for all $i=1,2,\dots,m$.
\end{proposition}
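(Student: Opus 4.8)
The plan is to obtain the ordinary (non-equivariant) Pieri rule by pulling back the equivariant Pieri rule of Proposition \ref{prop_eq_wgt_pie_rul} along the map $\xi^*$. First I would recall that, by definition, ${\bf b}{\mathbb S}_{\lambda^i}=\xi^*({\bf b}\widetilde{\mathbb S}_{\lambda^i})$ and that $\xi^*\colon H^*_{T^n}(\G_{\bf b}(k,n);\ZZ)\to H^*(\G_{\bf b}(k,n);\ZZ)$ is a ring homomorphism. Applying $\xi^*$ to the identity in Proposition \ref{prop_eq_wgt_pie_rul} and using multiplicativity gives
\[
{\bf b}{\mathbb S}_{\lambda^1}{\bf b}{\mathbb S}_{\lambda^i}
=\xi^*\!\left(Y_{\lambda^0}-\tfrac{b_0}{b_i}Y_{\lambda^i}\right){\bf b}{\mathbb S}_{\lambda^i}
+\sum_{\lambda^j:\lambda^j\to\lambda^i}\tfrac{b_0}{b_i}\,{\bf b}{\mathbb S}_{\lambda^j},
\]
so everything reduces to showing that the first summand on the right vanishes.

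The key observation is that $\xi$ is the inclusion of the fibre into the Borel construction $ET^n\times_{T^n}\G_{\bf b}(k,n)$, so the composite $\G_{\bf b}(k,n)\xrightarrow{\xi} ET^n\times_{T^n}\G_{\bf b}(k,n)\to BT^n$ is null-homotopic. By naturality of $\xi$ with respect to $\G_{\bf b}(k,n)\to\{pt\}$, the composite $H^*_{T^n}(\{pt\};\ZZ)=\ZZ[y_1,\dots,y_n]\to H^*_{T^n}(\G_{\bf b}(k,n);\ZZ)\xrightarrow{\xi^*}H^*(\G_{\bf b}(k,n);\ZZ)$ is zero in positive degrees; concretely $\xi^*(y_r)=0$ for every $r$. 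Since $Y_{\lambda^0}-\tfrac{b_0}{b_i}Y_{\lambda^i}$ is a linear form in $y_1,\dots,y_n$, it lies in the image of $\ZZ[y_1,\dots,y_n]$, whence $\xi^*(Y_{\lambda^0}-\tfrac{b_0}{b_i}Y_{\lambda^i})=0$ and the offending term disappears.

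One point I would make explicit is that $\tfrac{b_0}{b_i}$ is a genuine integer: since $\G_{\bf b}(k,n)$ is divisive we may assume $b_i\mid b_{i-1}$ for all $i$, so that $b_i\mid b_0$ and every coefficient appearing is integral; thus the whole computation stays inside integral cohomology, and $\xi^*$ remains degree-preserving so the surviving sum automatically lands in the correct degree. Combining the two facts above yields
\[
{\bf b}{\mathbb S}_{\lambda^1}{\bf b}{\mathbb S}_{\lambda^i}
=\sum_{\lambda^j:\lambda^j\to\lambda^i}\frac{b_0}{b_i}\,{\bf b}{\mathbb S}_{\lambda^j},
\]
which is the assertion. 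I do not anticipate a serious obstacle: the only step requiring care is the claim that $\xi^*$ annihilates the equivariant parameters, which is the standard fact that restriction to a fibre kills classes pulled back from the classifying space, together with the elementary bookkeeping that divisiveness keeps the coefficients integral.
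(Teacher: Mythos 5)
Your argument is correct and is essentially the paper's own proof: the paper simply derives the ordinary Pieri rule from the equivariant one of Proposition \ref{prop_eq_wgt_pie_rul}, which is exactly what you do by applying the ring homomorphism $\xi^*$ and noting that restriction to the fibre annihilates the classes $Y_{\lambda^0}-\tfrac{b_0}{b_i}Y_{\lambda^i}$ pulled back from $BT^n$. Your added remarks on the integrality of $b_0/b_i$ and the vanishing of $\xi^*(y_r)$ just make explicit what the paper leaves implicit.
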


Define $b(u_r):=b_e-b_f$ for $r\in \{1,2,\dots,n-1\}$ if $y_r-y_{r+1}=Y_{\lambda^e}-Y_{\lambda^f}$. For every finite sub collection $R$ of $\{1,2,\dots,n-1\}$ we define $b(U_R):=\prod_{r\in R}b(u_r)$. Moreover
\begin{equation}\label{eq_st_con_in_b}
\widetilde{\mathscr{C}}_{i~j}^{\ell}(b):=\sum_{|R|=d(\lambda^i)+d(\lambda^j)-d(\lambda^\ell)}K(i,j,\ell,R)b(U_R)
\end{equation}

For every Schubert symbol $\lambda^q$ such that $\lambda^\ell\succ\lambda^q\succeq\lambda^i,\lambda^j$ and for every sequence $\lambda^\ell=\lambda^{\ell_0}\rightarrow \lambda^{\ell_1}\rightarrow \dots\rightarrow \lambda^{\ell_d}=\lambda^q$ we can calculate 
\begin{equation}\label{eq_dij}
\mathscr{D}_{i~j}^{\ell~q}=\sum_{\substack{\lambda^\ell=\lambda^{\ell_0}\rightarrow \lambda^{\ell_1}\rightarrow\\ \dots\rightarrow\lambda^{\ell_d}=\lambda^q}}\frac{\widetilde{\mathscr{C}}_{i~j}^{q}({\bf{b}})}{b_{\ell_1}b_{\ell_2}\dots b_{\ell_d}}.
\end{equation}
Then we have the following formulae for the structure constant ${\bf{b}} \mathscr{C}_{i~j}^{\ell}$ for the ordinary cohomology ring of divisive weighted Grassmann orbifold with integer coefficients.
\begin{theorem}\label{thm_st_con_ord_coh}
The structure constant ${\bf{b}}\mathscr{C}_{i~j}^{\ell}$ is given by $${\bf{b}}\mathscr{C}_{i~j}^{\ell}=\mathscr{C}_{i~j}^{\ell}+\sum_{\lambda^q~:~\lambda^\ell\succ\lambda^q\succeq\lambda^i,\lambda^j}\mathscr{D}_{i~j}^{\ell~q}.$$
\end{theorem}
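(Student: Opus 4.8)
The plan is to specialize the equivariant formula of Theorem~\ref{thm_wei_st_con} to the degree-balanced situation $d(\lambda^i)+d(\lambda^j)=d(\lambda^\ell)$, in which the equivariant structure constant ${\bf{b}}\widetilde{\mathscr{C}}_{i~j}^{\ell}$ agrees with the ordinary one ${\bf{b}}\mathscr{C}_{i~j}^{\ell}$. Throughout I fix $i,j,\ell$ with $d(\lambda^\ell)=d(\lambda^i)+d(\lambda^j)$ and read off the identity from Theorem~\ref{thm_wei_st_con} term by term, extracting the constant (degree $0$) part.

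The first step is degree truncation. For any intermediate symbol $\lambda^q$ with $\lambda^\ell\succeq\lambda^q\succeq\lambda^i,\lambda^j$, the number of equivariant pieces of a puzzle with $\partial P=\Delta_{\sigma_r\lambda^i~\sigma_r\lambda^j}^{\sigma_r\lambda^q}$ is $|P|=d(\lambda^i)+d(\lambda^j)-d(\lambda^q)=d(\lambda^\ell)-d(\lambda^q)$. Since ${\bf{b}}\widetilde{\mathscr{C}}_{q~g^s}^{\ell}=0$ whenever $s<d(\lambda^\ell)-d(\lambda^q)=|P|$, while the inner sum in Theorem~\ref{thm_wei_st_con} runs over $0\le s\le|P|$, only the top index $s=|P|$ contributes. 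Hence
$${\bf{b}}\widetilde{\mathscr{C}}_{i~j}^{\ell}=\sum_{\lambda^\ell\succeq\lambda^q\succeq\lambda^i,\lambda^j}\;\sum_{P:\,\partial P=\Delta_{\sigma_r\lambda^i~\sigma_r\lambda^j}^{\sigma_r\lambda^q}}a^{(P)}_{|P|}\,{\bf{b}}\widetilde{\mathscr{C}}_{q~g^{|P|}}^{\ell}.$$

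Next I would evaluate the two surviving factors. Reading off the coefficient of $(p\widetilde{\mathbb{S}}_{\lambda^1}|_{z=0})^{|P|}$ in \eqref{eq_z_0}, the leading coefficient is $a^{(P)}_{|P|}=\prod_{s=1}^{|P|}\frac{{\bf{b}}(p_s)}{b_0}$. On the other side, substituting $s=|P|=r=d(\lambda^\ell)-d(\lambda^q)$ into the explicit expression for ${\bf{b}}\widetilde{\mathscr{C}}_{q~g^s}^{\ell}$ forces $j_0+\dots+j_r=s-r=0$, so every exponent vanishes and the polynomial factor equals $1$, leaving ${\bf{b}}\widetilde{\mathscr{C}}_{q~g^{|P|}}^{\ell}=\sum_{\lambda^\ell=\lambda^{\ell_0}\to\dots\to\lambda^{\ell_r}=\lambda^q}\frac{b_0^{\,r}}{b_{\ell_1}\cdots b_{\ell_r}}$. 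Multiplying the two factors, the power $b_0^{\,|P|}=b_0^{\,r}$ cancels, so $a^{(P)}_{|P|}\,{\bf{b}}\widetilde{\mathscr{C}}_{q~g^{|P|}}^{\ell}=\big(\prod_{s=1}^{|P|}{\bf{b}}(p_s)\big)\sum_{\text{chains}}\frac{1}{b_{\ell_1}\cdots b_{\ell_r}}$. Summing over all such $P$ and invoking the definition of $\widetilde{\mathscr{C}}_{i~j}^{q}({\bf{b}})$ collapses the puzzle sum to exactly $\mathscr{D}_{i~j}^{\ell~q}$ of \eqref{eq_dij}, giving ${\bf{b}}\widetilde{\mathscr{C}}_{i~j}^{\ell}=\sum_{\lambda^\ell\succeq\lambda^q\succeq\lambda^i,\lambda^j}\mathscr{D}_{i~j}^{\ell~q}$.

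Finally I would isolate the top term $q=\ell$. There $|P|=0$, the chain is trivial and the empty denominator product equals $1$, so $\mathscr{D}_{i~j}^{\ell~\ell}=\widetilde{\mathscr{C}}_{i~j}^{\ell}({\bf{b}})$; but with no equivariant pieces the product $\prod_{s=1}^{0}{\bf{b}}(p_s)$ is empty, so $\widetilde{\mathscr{C}}_{i~j}^{\ell}({\bf{b}})$ merely counts the puzzles with $\partial P=\Delta_{\sigma_r\lambda^i~\sigma_r\lambda^j}^{\sigma_r\lambda^\ell}$ and $|P|=0$, which by Lemma~\ref{thm_eq_st_con} (where $\sigma_r wt(P)=1$ for $|P|=0$) is precisely the Grassmannian structure constant $\mathscr{C}_{i~j}^{\ell}$. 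Splitting this term off and using ${\bf{b}}\widetilde{\mathscr{C}}_{i~j}^{\ell}={\bf{b}}\mathscr{C}_{i~j}^{\ell}$ in the balanced degree yields the asserted formula. The main obstacle is the careful degree bookkeeping of the two middle paragraphs — verifying that exactly the index $s=|P|$ survives and that the $b_0$-powers cancel cleanly against each other — together with correctly identifying the $q=\ell$ base case with the ordinary constant $\mathscr{C}_{i~j}^{\ell}$.
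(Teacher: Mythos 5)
Your proposal is correct and follows essentially the same route as the paper: specialize Theorem \ref{thm_wei_st_con} to the degree-balanced case, note that the vanishing ${\bf{b}}\widetilde{\mathscr{C}}_{q~g^s}^{\ell}=0$ for $s<d(\lambda^\ell)-d(\lambda^q)$ leaves only the top index $s=|P|$, cancel the powers of $b_0$ between $a^{(P)}_{|P|}$ and ${\bf{b}}\widetilde{\mathscr{C}}_{q~g^{|P|}}^{\ell}$, and identify the $q=\ell$ term with the ordinary Littlewood--Richardson coefficient $\mathscr{C}_{i~j}^{\ell}$. Your write-up is, if anything, slightly more explicit than the paper's chain of equalities about why only $s=|P|$ survives and why the empty-puzzle case gives $\mathscr{C}_{i~j}^{\ell}$.
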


\begin{proof}
 The equivariant structure constant ${\bf{b}}\widetilde{\mathscr{C}}_{i~j}^{\ell}$ becomes the  structure constant ${\bf{b}}\mathscr{C}_{i~j}^{\ell}$ for the ordinary cohomology if $d(\lambda^i)+d(\lambda^j)=d(\lambda^\ell)$. Thus we consider  $s=|R|=d(\lambda^i)+d(\lambda^j)-d(\lambda^q)=d(\lambda^\ell)-d(\lambda^q)$ in \eqref{eq_eq_st_con}. Define $d:=d(\lambda^\ell)-d(\lambda^q)$.

\begin{align*}
	{\bf{b}}{\mathscr{C}}_{i~j}^{\ell}&=\sum_{\lambda^\ell\succeq\lambda^q\succeq \lambda^i,\lambda^j} \sum_{R:~|R|=d}K(i,j,q,R)\mathcal{L}_{d,R}{\bf{b}}\widetilde{\mathscr{C}}_{q~g^d}^{\ell}\\
	&=	\sum_{\lambda^\ell\succeq\lambda^q\succeq \lambda^i,\lambda^j}\sum_{R:|R|=d}K(i,j,q,R)b(U_R)\sum_{\substack{\lambda^\ell=\lambda^{\ell_0}\to\lambda^{\ell_1}\to\\\dots\to\lambda^{\ell_d} =\lambda^q}} \frac{1}{b_{\ell_1}b_{\ell_2}\dots b_{\ell_d}}\\
	&=\sum_{\lambda^\ell\succeq\lambda^q\succeq \lambda^i,\lambda^j}\sum_{\substack{\lambda^\ell=\lambda^{\ell_0}\to\lambda^{\ell_1}\to\\
 \dots\to\lambda^{\ell_d} =\lambda^q}}\frac{\widetilde{\mathscr{C}}_{i~j}^{q}({\bf{b}})}{b_{\ell_1}b_{\ell_2}\dots b_{\ell_d}}\\
	&=\mathscr{C}_{i~j}^{\ell}+\sum_{\lambda^q~:~\lambda^\ell\succ\lambda^q\succeq \lambda^i,\lambda^j}\mathscr{D}_{i~j}^{\ell~q}
\end{align*}	
We apply Remark \ref{rem_2nd_coef} together with Remark \ref{rem_1st_coef} to derive the second line from the first line.
\end{proof}

Using the above theorem, one can compute the integral cohomology ring of a divisive weighted Grassmann orbifold explicitly.

\begin{remark}
For a divisive $\G_{\bf b}(k,n)$ the structure constants ${\bf{b}}\mathscr{C}_{i~j}^{\ell}\in \ZZ_{\geq 0}$ follows from Theorem \ref{prop_st_con_int} together with Theorem \ref{thm_pos_eq_st_con}.
\end{remark}

In the following example, we compute the integral cohomology ring of a divisive weighted Grassmann orbifold $\G_{\bf{b}}(2,4)$. 

\begin{example}
 Let $\alpha$ and $\beta$ be two positive integers. Consider a Pl\"{u}cker weight vector ${\bf{b}}:=(\alpha\beta,\alpha\beta,\alpha\beta,\alpha,\alpha,\alpha)$ for $k=2, n=4$. Then ${\bf{b}}$ is a divisive  Pl\"{u}cker weight vector and the corresponding weighted Grassmann orbifold  $\G_{\bf{b}}(2,4)$ is a divisive weighted Grassmann orbifold. Corresponding to every Schubert symbol we have $6$ basis element $\{{\bf{b}}\widetilde{\mathbb{S}}_{\lambda^i}\}_{i=0}^5$. Using Proposition \ref{prop_wgt_pr_ru}, it follows that $${\bf{b}}{\mathscr{C}}_{1~i}^{j}=\frac{b_0}{b_i} \text{ if } d(\lambda^j)=d(\lambda^i)+1 \text{ and } \lambda^j\succ \lambda^i.$$
Now we calculate the remaining structure constants. Note that from \cite{KnTa} we have

$$\widetilde{\mathbb{S}}_{\lambda^3}\widetilde{\mathbb{S}}_{\lambda^3}=(Y_{\lambda^0}-Y_{\lambda^3})(Y_{\lambda^2}-Y_{\lambda^4})\widetilde{\mathbb{S}}_{\lambda^3}+(Y_{\lambda^2}-Y_{\lambda^4})\widetilde{\mathbb{S}}_{\lambda^4}+\widetilde{\mathbb{S}}_{\lambda^5}.$$
Therefore
\begin{align*}
{\bf{b}}\mathscr{C}_{3~3}^{5}&=\mathscr{C}_{3~3}^{5}+\mathscr{D}_{3~3}^{5~4}+\mathscr{D}_{3~3}^{5~3}\\
&=1+\frac{b_2-b_4}{b_4}+\frac{(b_0-b_3)(b_2-b_4)}{b_4b_3}\\
&=1+\frac{b_2-b_4}{b_4}(1+\frac{b_0-b_3}{b_3})\\
&=1+\frac{(b_2-b_4)b_0}{b_4b_3}=1+\beta(\beta-1).	
\end{align*}
Similarly one can calculate
$${\bf{b}}\mathscr{C}_{2~2}^{5}=1+\frac{b_0(b_1-b_2)}{b_2b_4}=1+\beta(\beta-1).$$ 
Moreover, using \cite{KnTa}
$$\widetilde{\mathbb{S}}_{\lambda^3}\widetilde{\mathbb{S}}_{\lambda^2}=(Y_{\lambda^0}-Y_{\lambda^4})\widetilde{\mathbb{S}}_{\lambda^4}.$$
Then
\begin{align*}
	{\bf{b}}\mathscr{C}_{2~3}^{5}&=\mathscr{C}_{2~3}^{5}+\mathscr{D}_{2~3}^{5~4}\\
	&=0+\frac{b_0-b_4}{b_4}=\beta-1.
\end{align*}
All the remaining structure constants in the integral cohomology ring of the above divisive $\G_{\bf{b}}(2,4)$ are zero. This gives the integral cohomology ring of the given divisive weighted Grassmann orbifold explicitly.
\end{example}

\section{appendix}

In this appendix, we discuss some properties on integral cohomology of $\G_{\bf b}(2,4)$.
Recall the subcomplexes $X^i$ of $\G_{\bf{b}}(2,4)$ from \eqref{filtration} for $i=0,\dots,5$. Note that $$X^3=\WW P(b_5,b_4,b_3)\sqcup_{\WW P(b_5,b_4)}\WW P(b_5,b_4,b_2). \text{ Define } $$ $$\eta=\frac{\lcm\{b_5,b_4,b_3\}}{\lcm\{b_5,b_4\}} \text{ and }\eta'=\frac{\lcm\{b_5,b_4,b_2\}}{\lcm\{b_5,b_4\}}.$$

\begin{lemma}\label{lem_no_p_tor_x_3}
	$H_i(X^3;\ZZ)$ has no torsion if $i\neq 2$. Moreover if $\gcd\{\eta,\eta'\}=1$ then $H_i(X^3;\ZZ)$ is torsion-free for all $i$.
\end{lemma}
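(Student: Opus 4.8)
The plan is to extract $H^*(X^3;\ZZ)$ from the third cofibration of the building sequence and then transfer to homology by the universal coefficient theorem. As a first step I would record the cohomology of the earlier stages and of the relevant lens complexes. Since $R(\lambda^1)=\{\lambda^0\}$ and $R(\lambda^2)=R(\lambda^3)=\{\lambda^0,\lambda^1\}$, the first two cofibrations identify $X^1$ with the weighted projective line $\WW P(b_0,b_1)\simeq S^2$ and $X^2$ with the weighted projective plane $\WW P(b_0,b_1,b_2)$; hence, by \cite{Ka} (equivalently, by two applications of the exact sequence \eqref{eq_ex_seq_hom_wgt_gsm}), $H^*(X^2;\ZZ)$ is free, equal to $\ZZ$ in degrees $0,2,4$ and $0$ otherwise. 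The lens complex attached at stage three is $L'(b_3;{\bf b}^{(3)})=L'(b_3;(b_0,b_1))$, a $3$-dimensional lens complex, so by \cite[Theorem 2]{Ka} its reduced cohomology is $\ZZ_{\eta}$ in degree $2$, $\ZZ$ in degree $3$, and $0$ elsewhere, where $\eta=\lcm\{b_0,b_1,b_3\}/\lcm\{b_0,b_1\}$ is precisely the $\mu$ of that theorem.

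Feeding these into the cohomology exact sequence of the cofibration $L'(b_3;(b_0,b_1))\hookrightarrow X^2\to X^3$ from Proposition \ref{prop_cofibre}, all terms vanish outside a short range and the sequence collapses to
\[
0\to H^2(X^3)\to H^2(X^2)\xrightarrow{\,f^*\,}\widetilde{H}^2\big(L'(b_3;(b_0,b_1))\big)\to H^3(X^3)\to 0,
\]
together with $0\to\ZZ\to H^4(X^3)\to\ZZ\to 0$ in degree $4$, while $H^0(X^3)=\ZZ$, $H^1(X^3)=0$, and $H^q(X^3)=0$ for $q\geq 5$. Thus $H^2(X^3)=\ker f^*$, $H^3(X^3)=\operatorname{coker}f^*$, and $H^4(X^3)\cong\ZZ^2$ is free.

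The crux is to identify the connecting map $f^*\colon H^2(X^2)=\ZZ\to\widetilde{H}^2(L'(b_3;(b_0,b_1)))=\ZZ_{\eta}$. I would observe that the attaching locus of the cell $\lambda^3$ lives in $\{z_2=0\}=X^1\subset X^2$ (because $R(\lambda^3)=\{\lambda^0,\lambda^1\}$), so the inclusion of the lens complex factors as $L'(b_3;(b_0,b_1))\xrightarrow{f'}X^1\hookrightarrow X^2$, whence $f^*=(f')^*\circ j^*$. The restriction $j^*\colon H^2(X^2)\to H^2(X^1)$ is multiplication by $\eta'=\lcm\{b_0,b_1,b_2\}/\lcm\{b_0,b_1\}$, read off from the exact sequence of the second cofibration $L'(b_2;(b_0,b_1))\hookrightarrow X^1\to X^2$, which yields $0\to H^2(X^2)\xrightarrow{j^*}H^2(X^1)\to\ZZ_{\eta'}\to 0$. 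The map $(f')^*\colon H^2(X^1)=\ZZ\to\ZZ_{\eta}$ is reduction modulo $\eta$: it is the connecting map of the auxiliary cofibration $L'(b_3;(b_0,b_1))\xrightarrow{f'}X^1\to \WW P(b_0,b_1,b_3)$, whose target is torsion-free by \cite{Ka}, forcing $(f')^*$ to carry a generator to a generator of $\ZZ_{\eta}$. Hence $f^*$ sends a generator to $\eta'$ times a generator, so $\ker f^*=\tfrac{\eta}{\gcd\{\eta,\eta'\}}\ZZ\cong\ZZ$ and $\operatorname{coker}f^*\cong\ZZ_{\gcd\{\eta,\eta'\}}$.

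Assembling the pieces, $H^*(X^3;\ZZ)$ equals $\ZZ,0,\ZZ,\ZZ_{\gcd\{\eta,\eta'\}},\ZZ^2$ in degrees $0,\dots,4$ and $0$ thereafter, so the only torsion sits in $H^3$. By the universal coefficient theorem the torsion subgroup of $H_i(X^3;\ZZ)$ agrees with that of $H^{i+1}(X^3;\ZZ)$, which is nontrivial only for $i=2$, where it is $\ZZ_{\gcd\{\eta,\eta'\}}$. Therefore $H_i(X^3;\ZZ)$ is torsion-free for $i\neq 2$; and if $\gcd\{\eta,\eta'\}=1$ then $H^3(X^3)=0$, so $H_2$ is torsion-free as well and $X^3$ has torsion-free homology in every degree. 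The main obstacle is the identification of $f^*$ in the third paragraph, and the decisive point there is the factorization through $X^1$ together with the two factors $\eta'$ and $\eta$ supplied by the building sequences of the weighted projective planes $\WW P(b_0,b_1,b_2)$ and $\WW P(b_0,b_1,b_3)$ meeting along $X^1$; the remaining steps are the mechanical bookkeeping of the exact sequences.
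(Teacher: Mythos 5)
Your proof is correct, and while it rests on the same two geometric inputs as the paper's --- the cofibration $L'(b_3;(b_0,b_1))\hookrightarrow X^2\to X^3$ and the alternative description of $X^3$ as $\WW P(b_0,b_1,b_3)$ with the cell for $\lambda^2$ attached --- it organizes them differently and proves something sharper. The paper runs the two homology exact sequences independently: the first shows the torsion of $H_2(X^3)$ embeds in $\ZZ_\eta$, the second that it embeds in $\ZZ_{\eta'}$, and the gcd hypothesis then kills it; no attempt is made to identify the torsion when $\gcd\{\eta,\eta'\}>1$. You instead factor the attaching map through $X^1$ (legitimate, since $R(\lambda^3)=\{\lambda^0,\lambda^1\}$ forces the boundary of the $\lambda^3$-cell into $\{z_2=0\}$), read off that $j^*\colon H^2(X^2)\to H^2(X^1)$ is multiplication by $\eta'$ from the building sequence of $\WW P(b_0,b_1,b_2)$, and that $(f')^*$ is onto $\ZZ_\eta$ because $H^3(\WW P(b_0,b_1,b_3))=0$; composing gives $H^3(X^3;\ZZ)\cong\ZZ_{\gcd\{\eta,\eta'\}}$ exactly, hence by universal coefficients the torsion of $H_2(X^3;\ZZ)$ is precisely $\ZZ_{\gcd\{\eta,\eta'\}}$. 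This buys a complete computation (and in particular an "if and only if" version of the second assertion), at the cost of the extra bookkeeping needed to identify the connecting homomorphism; the paper's version is shorter because it never needs to know the map, only that its cokernel is simultaneously a quotient of $\ZZ_\eta$ and of $\ZZ_{\eta'}$. All the individual identifications you use ($X^2=\WW P(b_0,b_1,b_2)$, the reduced cohomology of the $3$-dimensional lens complex, the two short exact sequences) check out against \cite[Theorems 1 and 2]{Ka} and Proposition \ref{prop_cofibre}.
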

\begin{proof}
	Consider the cofibration $$L'(b_3;(b_5,b_4))\to \WW P(b_5,b_4,b_2) \to X^3.$$
	This induces an exact sequence in homology by the following
 
	\begin{equation}\label{eq_ex_seq_hom}
		\to \widetilde{H}_{i}(L'(b_3;(b_5,b_4)))\to  H_i(\WW P(b_5,b_4,b_2)) \to   H_i(X^3) \to 	\widetilde{H}_{i-1}(L'(b_3;(b_5,b_4))) \to 
	\end{equation}
	For $i=1$ in \eqref{eq_ex_seq_hom} we get
	$$ 0 \to   H_1(X^3) \to \ZZ \to \ZZ \to.$$
 $ H_1(X^3)$ is either $\{0\}$ or a finite abelian group as $X^3$ has no cell of dimension $1$. Also $ H_1(X^3) \to 	\ZZ $ is injective. Thus $ H_1(X^3)=\{0\}$. 
 
For $i=3$ in \eqref{eq_ex_seq_hom}, we get
	$$\to \ZZ \to  0 \to   H_3(X^3) \to 	0 \to \ZZ.$$
	Thus $H_3(X^3)=\{0\}$. For $i=4$ in \eqref{eq_ex_seq_hom} we get
	$$\to 0 \to  \ZZ \to   H_4(X^3) \to 	\ZZ \to 0. $$
	Thus $ H_4(X^3)=\ZZ\oplus \ZZ$.
	For $i\geq 5$ in \eqref{eq_ex_seq_hom} we get
	$$\to 0 \to  0 \to   H_i(X^3) \to 	0 \to .$$
Now put $i=2$ in \eqref{eq_ex_seq_hom} we get
	$$\to 0 \to  \ZZ \to   H_2(X^3) \to \ZZ_\eta \to 0,$$

	Thus, torsion can arise only in $H_2(X^3)$, and there can be maximum $\eta$ torsion. There exist another cofibration $$L'(b_2;(b_5,b_4))\to \WW P(b_5,b_4,b_3) \to X^3.$$ Considering the homology exact sequence corresponding to this cofibration, we can say $H_2(X^3)$ has maximum $\eta'$ torsion. Hence, if $\gcd(\eta,\eta')=1$ then $H_i(X^3,\ZZ)$ has no torsion for all $i$.
\end{proof}

\begin{corollary}
	For a prime $p$, there always exist a Pl\"{u}cker permutation $\sigma$ on $\{0,1,\dots,5\}$ such that the integral cohomology of the 3rd skeleton $X^3$ in $\G_{\sigma \bf b}(2,4)$ is torsion free. 
\end{corollary}

\begin{theorem}\label{tor_in_2,4}
	$H^i({\rm{Gr}}_{\bf{b}}(2,4);\ZZ)$ is torsion free for any weighted Grassmann orbifold ${\rm{Gr}}_{\bf b}(2,4)$ if $i\neq 3$.
\end{theorem}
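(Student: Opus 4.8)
The plan is to reduce to a statement about integral homology and then to run the $p$-local building-sequence argument of Theorem~\ref{thm_no_p_tor}, but imposing its hypothesis only on the two top cells. First, by the universal coefficient theorem the torsion of $H^i(\G_{\bf b}(2,4);\ZZ)$ equals the torsion of $H_{i-1}(\G_{\bf b}(2,4);\ZZ)$, so it suffices to show that $H_j(\G_{\bf b}(2,4);\ZZ)$ is torsion-free for all $j\neq 2$. For $(k,n)=(2,4)$ the six Schubert symbols have $d(\lambda^i)=0,1,2,2,3,4$, so the $q$-cells sit in real dimensions $0,2,4,4,6,8$; since every reduced even-degree homology group of a lens complex vanishes (\cite[Theorem 2]{Ka}), an easy induction along the building sequence shows $H_{\mathrm{odd}}=0$, and the only even degrees above $2$ in which torsion could appear are $4$ and $6$. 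Thus I only need to prove that $H_4$ and $H_6$ are torsion-free.

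Fix a prime $p$. I will exhibit a Pl\"ucker permutation $\sigma$ whose induced $q$-CW structure (Theorem~\ref{thm_per_plu_vec}) satisfies the hypothesis of Theorem~\ref{thm_no_p_tor} at the cells $\lambda^4$ and $\lambda^5$, allowing it to fail at $\lambda^3$. The Pl\"ucker relation $z_0z_5-z_1z_4+z_2z_3=0$ forces every Pl\"ucker permutation to preserve the partition of $\{0,\dots,5\}$ into the complementary pairs $\{0,5\},\{1,4\},\{2,3\}$; as there are exactly $2^3\cdot 3!=48$ such pair-preserving permutations and $48$ Pl\"ucker permutations, every way of matching the three weight-pairs to the three cell-pairs, with an arbitrary transposition inside each pair, is realized by some $\sigma$. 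I order the three weight-pairs by the minimal $p$-adic valuation occurring in each, send the pair containing the globally least valuation to $\{\lambda^0,\lambda^5\}$ with that weight on $\lambda^5$, send the remaining pair of smaller minimal valuation to $\{\lambda^1,\lambda^4\}$ with its smaller weight on $\lambda^4$, and send the last pair to $\{\lambda^2,\lambda^3\}$. Since $R(\lambda^5)=\{\lambda^1,\lambda^2,\lambda^3,\lambda^4\}$ and $R(\lambda^4)=\{\lambda^0,\lambda^2,\lambda^3\}$, where $\{\lambda^2,\lambda^3\}$ is a whole complementary pair, the $p$-content of the weight on $\lambda^5$ divides all four weights of $R(\lambda^5)$ and the $p$-content of the weight on $\lambda^4$ divides both weights on $\lambda^2,\lambda^3$; this is exactly the condition of Theorem~\ref{thm_no_p_tor} at $j=5$ (three of four) and at $j=4$ (two of three).

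With this $\sigma$ I run the homology long exact sequences of the cofibrations $L'(b_{\sigma(i)};\sigma{\bf b}^{(i)})\hookrightarrow \sigma X^{i-1}\to\sigma X^i$ of Proposition~\ref{prop_cofibre}, starting from $\sigma X^2$, a weighted projective plane and hence torsion-free by \cite{Ka}. Because the reduced homology of $L'(b_{\sigma(i)};\sigma{\bf b}^{(i)})$ vanishes in positive even degrees and equals $\ZZ_{\mu_e}$ with $\mu_e=l_e(\sigma\overline{\bf b}^{(i)})/l_e(\sigma{\bf b}^{(i)})$ in the odd degree $2e-1$ for $1\le e\le d(\lambda^i)-1$ (with a free top class in degree $2d(\lambda^i)-1$), the sequence reduces, for each even $q>0$, to a short exact sequence $0\to H_q(\sigma X^{i-1})\to H_q(\sigma X^i)\to \ker\!\big(\widetilde H_{q-1}(L')\to H_{q-1}(\sigma X^{i-1})\big)\to 0$. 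For $q=4,6$ the groups $\widetilde H_3(L')$ and $\widetilde H_5(L')$ are either the free top class of the lens complex or the torsion groups $\ZZ_{\mu_2},\ZZ_{\mu_3}$, which by the $j=4,5$ arrangement carry no $p$-torsion; hence, inductively, $H_4(\sigma X^i)$ and $H_6(\sigma X^i)$ acquire no $p$-torsion at any stage, and in particular $H_4(\sigma X^5)$ and $H_6(\sigma X^5)$ have none.

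Since $\sigma X^5=\G_{\bf b}(2,4)$ up to the weakly equivariant homeomorphism of Theorem~\ref{thm_per_plu_vec}, and $p$ was arbitrary, $H_4$ and $H_6$ are torsion-free; combined with $H_0,H_8$ free and $H_{\mathrm{odd}}=0$, all torsion of $H_*(\G_{\bf b}(2,4);\ZZ)$ lies in $H_2$, which via the universal coefficient theorem gives the statement for $H^i$ with $i\neq3$. The crux of the whole argument, and the step I would spend the most care on, is the construction of $\sigma$: one must verify that a single pair-preserving permutation can meet the divisibility conditions at $\lambda^4$ and $\lambda^5$ simultaneously, and that the unavoidable failure at $\lambda^3$ feeds torsion only into $H_2$ (equivalently $H^3$). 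This works because $\lambda^4$ and $\lambda^5$ are adjacent while $R(\lambda^4)$ contains an entire complementary pair; the only facts used about ${\bf b}$ are that complementary pairs share the common sum $c$ (which controls the minimal valuations pairwise) and that the Pl\"ucker permutation group is the full pair-preserving group.
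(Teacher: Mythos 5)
Your proposal is correct and follows essentially the same route as the paper: for each prime $p$ you choose a Pl\"{u}cker permutation placing the minimal $p$-adic valuations at the top two cells $\lambda^5$ and $\lambda^4$, run the long exact sequences of the cofibrations along the building sequence using Kawasaki's computation of the lens complex cohomology, and observe that the unavoidable torsion from the $\lambda^3$ stage lands only in $H_2$, hence only in $H^3$. The only cosmetic differences are that you justify the existence of the required $\sigma$ via the pair-preserving description of the $48$ Pl\"{u}cker permutations and handle the $X^3$ stage inline, where the paper instead invokes Lemma \ref{lem_no_p_tor_x_3}.
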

\begin{proof}
	For a prime $p$, choose a Pl\"{u}cker permutation $\sigma$ such that the minimum power of $p$ attains at $b_{\sigma(0)}$. Note that there are $4$ different Pl\"{u}cker permutation which satisfy the above condition. We can choose $\sigma$ such that $ r_{\sigma(1)} \leq \min\{r_{\sigma(1)},  r_{\sigma(2)}, r_{\sigma(3)}, r_{\sigma(4)}\}$, where $p^{r_i}$ is the $p$-th content in $b_i$. Thus using Lemma \ref{thm_per_plu_vec}, we can assume that $r_0\leq r_1\leq \min\{r_{1},  r_{2}, r_{3}, r_{4}\}$. From the cofibration $L^{'}(b_{0};{\bf{b}}^{(0)})\to X^4 \hookrightarrow \G_{\bf{b}}(2,4) $ we have
	$$\to  H_i(X^4) \to H_i(\G_{\bf{b}}(2,4)) \to \widetilde{H}_{i-1}(L'(b_{0};{\bf{b}}^{(0)})) \to  H_{i-1}( X^4)\to .$$
	For $i=8$, we have $$\to 0\to H_8(\G_{\bf{b}}(2,4))\to \ZZ\to 0\to .$$
	For $2\leq i\leq 7$,  $\widetilde{H}_{i-1}(L'(b_{0};{\bf{b}}^{(0)}))$ has no $p$-torsion. Thus $H_i(\G_{\bf{b}}(2,4))$ has no $p$-torsion if the same is true for $H_i( X^4)$.
	Similarly from the cofibration
	$L^{'}(b_{1},{\bf{b}}^{(1)})\to  X^3\hookrightarrow X^4 $ we have
	$$ \to \widetilde{H}_{i}(L'(b_{1};{\bf{b}}^{(1)}))\to  H_i( X^3) \to   H_i( X^4) \to \widetilde{H}_{i-1}(L'(b_{1};{\bf{b}}^{(1)})) \to   .$$ 
	For $i=6$ we 	
	$$ 0\to  H_6( X^4) \to \ZZ\to 0  .$$
	Thus $ H_6( X^4)=\ZZ$. For  $2\leq i \leq 5$,  $H_i( X^4)$ has no $p$-torsion if the same is true for $H_i( X^3)$ as $\widetilde{H}_{i-1}(L'(b_{1};{\bf{b}}^{(1)}))$ has no $p$-torsion. Using Lemma \ref{lem_no_p_tor_x_3}, $H_i( X^3)$ has no torsion if $i\neq 2$. 
	Thus $H_i(\G_{\bf{b}}(2,4);\ZZ)$ has no torsion if $i\neq 2$. Hence, the result follows the universal coefficient theorem for cohomology.
\end{proof}

\begin{theorem}
   Let ${\bf{b}}=(b_0,b_1,b_2,b_3,b_4,b_5)\in (\ZZ_{\geq 1})^6$ be  a Pl\"{u}cker weight vector for $2<4$. Then $H^*(\G_{\bf{b}}(2,4);\ZZ)$ has no torsion if $b_1=b_2$ and $(b_0,b_5)=1$. 
\end{theorem}
\begin{proof}
    Since ${\bf{b}}$ is a Pl\"{u}cker weight vector we have $b_1+b_4=b_2+b_3$. If $b_1=b_2$ then $b_3=b_4$. Thus ${\bf{b}}=(b_0,b_1,b_1,b_3,b_3,b_5)$. For a prime $p$, we can assume $p$-th content of $b_1$ divides $p$-th content of $b_3$. Otherwise consider the Pl\"{u}cker permutation $\sigma$ defined by $\sigma(1)=4,\sigma(2)=3$, $\sigma(4)=1,\sigma(3)=2$ and $\sigma(i)=i$ for $i\in \{0,5\}$. Then $\sigma {\bf{b}}=(b_0,b_3,b_3,b_1,b_1,b_5)$. Then by Theorem \ref{thm_no_p_tor}, $H^*(X^4;\ZZ)$ has no torsion. Considering the cohomology exact sequence in \eqref{eq_ex_seq_hom_wgt_gsm}, $H^*(\G_{\bf{b}}(2,4);\ZZ)$ has maximum $b_0$ torsion. Consider the Pl\"{u}cker permutation $\sigma'$ defined by $\sigma'(0)=5$, $\sigma'(5)=0$ and $\sigma'(i)=i$ for $i\in \{1,2,3,4\}$. Then $\sigma' {\bf{b}}=(b_5,b_1,b_1,b_3,b_3,b_0)$. By a similar argument $H^*(\G_{\sigma' {\bf{b}}}(2,4);\ZZ)$ has maximum $b_5$ torsion. Since $(b_0,b_5)=1$ then $H^*(\G_{\bf{b}}(2,4);\ZZ)$ has no torsion.
\end{proof}

{\bf Acknowledgement.} 
The author thanks to Soumen Sarkar for his guidance and support. We have many valuable discussions. The author thanks Hiraku Abe and Tommo Matsumura for helpful discussions. The author would like to thank IIT Madras for the Ph.D. fellowship.

\section{Declarations}
\subsection*{Ethical Approval}
This declaration is not applicable.

\subsection*{Consent to participate}
This declaration is not applicable.

\subsection*{Consent to publish}
This declaration is not applicable.

\subsection*{Competing interests}
The author declares that he has no competing interests.

\subsection*{Funding}
 The author is funded by the Indian Institute of Technology Madras PhD fellowship during this work.

\subsection*{Availability of data and materials}
Data sharing does not apply to this article, as no data sets were generated or analyzed during the current study.

\bibliographystyle{abbrv}
\bibliography{ref-Wedge1}

\end{document}